\documentclass{article} 
\usepackage{iclr2027_conference,times}

\usepackage{amsmath,amsfonts,bm}

\def\eqref#1{equation~\ref{#1}}

\def\1{\bm{1}}

\DeclareMathAlphabet{\mathsfit}{\encodingdefault}{\sfdefault}{m}{sl}
\SetMathAlphabet{\mathsfit}{bold}{\encodingdefault}{\sfdefault}{bx}{n}

\newcommand{\E}{\mathbb{E}}
\newcommand{\Ls}{\mathcal{L}}

\DeclareMathOperator*{\argmin}{arg\,min}

\usepackage{amsthm}
\usepackage{amssymb}
\usepackage{mathtools}
\usepackage{booktabs}
\usepackage{url}

\usepackage{xcolor}
\usepackage{colortbl}
\usepackage{color}
\usepackage{graphicx}
\usepackage{multirow}
\usepackage{pifont}
\usepackage[many]{tcolorbox}

\usepackage{caption} 

\usepackage{algorithm}
\usepackage{algorithmic}

\usepackage{verbatim}
\usepackage{xspace} %

\usepackage{enumerate}
\usepackage{enumitem}

\newtheorem{theorem}{Theorem}
\newtheorem{lemma}[theorem]{Lemma}
\newtheorem{proposition}[theorem]{Proposition}
\newtheorem{corollary}[theorem]{Corollary}
\newtheorem{assumption}[theorem]{Assumption}
\newtheorem{example}[theorem]{Example}
\theoremstyle{definition}
\newtheorem{definition}[theorem]{Definition}
\newtheorem{remark}[theorem]{Remark}

\definecolor{mainresultframe}{HTML}{7A8793}
\definecolor{mainresultback}{HTML}{F3F6F8}
\newtcolorbox{mainresultbox}{
  enhanced,
  breakable,
  colback=mainresultback,
  colframe=mainresultframe,
  boxrule=0.6pt,
  arc=1pt,
  outer arc=1pt,
  left=5pt,
  right=5pt,
  top=4pt,
  bottom=4pt,
  before skip=6pt,
  after skip=6pt,
  before upper={\setlength{\topsep}{0pt}\setlength{\partopsep}{0pt}}
}

\definecolor{assumptionframe}{HTML}{3F7652}
\definecolor{assumptionback}{HTML}{F3F8F4}
\newtcolorbox{assumptionbox}{
  enhanced,
  breakable,
  colback=assumptionback,
  frame hidden,
  borderline west={2pt}{0pt}{assumptionframe},
  sharp corners,
  left=6pt,
  right=5pt,
  top=1pt,
  bottom=1pt,
  before skip=1pt,
  after skip=1pt,
  before upper={\setlength{\topsep}{0pt}\setlength{\partopsep}{0pt}}
}

\definecolor{remarkframe}{HTML}{A8895F}
\definecolor{remarkback}{HTML}{FCFAF5}
\newtcolorbox{remarkbox}{
  enhanced,
  breakable,
  colback=remarkback,
  colframe=remarkframe,
  boxrule=0.4pt,
  toprule=2pt,
  arc=1pt,
  outer arc=1pt,
  left=6pt,
  right=5pt,
  top=1pt,
  bottom=1pt,
  before skip=1pt,
  after skip=1pt,
  before upper={\setlength{\topsep}{0pt}\setlength{\partopsep}{0pt}}
}

\newcommand{\Obound}[1]{\mathcal{O}\left( #1 \right)}
\newcommand{\OboundTilde}[1]{\tilde{\mathcal{O}}\left( #1 \right)}

\newcommand{\F}{F}

\providecommand{\argmin}{\operatorname*{arg\,min}}
\providecommand{\E}{}
\renewcommand{\E}{\mathbb{E}}
\newcommand{\inner}[2]{\left\langle #1,#2\right\rangle}
\newcommand{\norm}[1]{\left\lVert #1\right\rVert}

\newcommand{\cstar}{C_\star}

\definecolor{PineGreen}{HTML}{008B72}
\newcommand{\greencheck}{\color{PineGreen}\ding{51}}
\newcommand{\redx}{\color{red} \ding{55}}

\definecolor{darkgreen}{rgb}{0.0, 0.45, 0.0}

\usepackage{hyperref}
\usepackage{url}

\title{A Few Accelerated Algorithms for Convex Optimization under $(H_0,H_1)$-Smoothness\thanks{Preprint. \today.}}

\iclrfinalcopy

\author{Aleksandr Lobanov \\
MSU AI Center, IAI MSU, HSE\\
\texttt{lobanovav@my.msu.ru}
}

\begin{document}

\maketitle
\thispagestyle{empty}

\begin{abstract}
We develop accelerated algorithms for convex \((H_0,H_1)\)-smooth optimization,
where \(\norm{\nabla^2 f(x)}\le H_0+H_1(f(x)-f^*)\).  This class generalizes
standard smoothness and contains the \((L_0,L_1)\)-smooth class.  Combining a
Nesterov-type accelerated gradient scheme with small-dimensional relaxation
and phase restarts, we obtain a full-gradient method with iteration complexity
\(\widetilde O(\sqrt{H_0\widetilde R^2/\varepsilon}
+\sqrt{H_1\widetilde R^2}\log(F_0/\varepsilon))\).  We extend the same
approach to randomized coordinate optimization, obtaining a coordinate method
with uniform sampling whose iteration complexity carries the standard factor
\(d\), and a coordinate method with non-uniform sampling whose iteration
complexity is governed by
\(S_{1/2}^{(j)}=\sum_i\sqrt{H_{j,i}}\).  These results provide, to our knowledge,
the first accelerated full-gradient and coordinate guarantees for this convex
class.  We also provide practical implementation recommendations.  Experiments
confirm the predicted acceleration, gains from non-uniform sampling, and the
viability of inexact relaxation.
\end{abstract}

\section{Introduction}

Gradient descent is one of the basic computational primitives of modern
optimization.  Its stochastic and adaptive descendants, including SGD,
momentum, AdaGrad, Adam, AdamW, and more recent matrix-valued optimizers such as
Muon, are the default tools behind large-scale machine learning
\citep{bottou2018optimization,duchi2011adagrad,kingma2015adam,loshchilov2019adamw,liu2025muon}.
Even when these methods differ in normalization, adaptivity, momentum, or
matrix preconditioning, they share the same first-order nature: each iteration
uses local descent information and turns it into a parameter update.  This is
why a sharper understanding of gradient-based optimization assumptions often
translates directly into sharper theory for practical training algorithms.

The importance of this question is particularly visible in deep learning and
large language model training.  Modern architectures are trained almost entirely
with first-order methods and their adaptive variants, from Transformers
\citep{vaswani2017attention} to large-scale language models
\citep{brown2020language}.  At the same time, successful training recipes rely
on mechanisms that are not naturally explained by the classical globally smooth
model: gradient clipping is used to stabilize training with large or transient
gradients \citep{pascanu2013difficulty,zhang2019gradientclipping}, learning-rate
warmup is standard in large-batch and Transformer training
\citep{goyal2017largebatch,vaswani2017attention}, and clipped or normalized
updates are particularly effective in overparameterized regimes
\citep{lobanov2026clipped}.  These empirical practices suggest that
relevant curvature is not a single global constant.  It changes along the
trajectory and often becomes much smaller as training~approaches~a~good~solution.

Classical accelerated theory gives optimal guarantees for convex optimization
under a global Lipschitz bound on the gradient
\citep{nesterov1983method,nesterov2018lectures}.  This assumption is sharp as a
worst-case abstraction, but it can be too conservative for objectives whose
curvature decreases during training.  This gap between practice and theory
motivated generalized smoothness assumptions.  The most studied example is
\((L_0,L_1)\)-smoothness,
$
  \norm{\nabla^2 f(x)}
  \le
  L_0+L_1\norm{\nabla f(x)},
$
which was introduced to model the observed dependence of local smoothness on
the gradient norm in deep learning and was later developed as a framework for
clipping, normalization, and convex optimization under generalized smoothness
\citep{zhang2019gradientclipping,koloskova2023clipping,li2023generalized,lobanov2024linear,vankov2024l0l1,tyurin2024unified,gorbunov2024l0l1,lobanov2025power,gaash2025clipped,tyurin2025near, borodich2026nesterov,lobanov2026clipped}.
More recently, gap-dependent smoothness assumptions, in particular
\((H_0,H_1)\)-smoothness assumptions, were proposed, where the curvature is
controlled by the current optimality gap rather than by the gradient norm.
Following this line of work
\citep{vaswani2025armijo,liu2025warmup,alimisis2025warmup,lobanov2026clipped,vaswani2026steepestadam},
in this paper we focus on convex objectives satisfying the
\((H_0,H_1)\)-smoothness condition
\begin{equation}
\label{eq:h0h1-smoothness}
  \norm{\nabla^2 f(x)}
  \le
  H_0+H_1\bigl(f(x)-f^*\bigr),
  \qquad H_0,H_1\ge 0 .
\end{equation}
When \(H_1=0\), this is the standard \(H_0\)-smooth convex setting, while
\(H_1>0\) allows the curvature to be large far from the optimum and to decrease
with the optimization gap.  Importantly, this model is not disjoint from the
\((L_0,L_1)\)-smooth literature: it covers the \((L_0,L_1)\)-smooth class and
is strictly broader, since there are \((H_0,H_1)\)-smooth functions that are
not \((L_0,L_1)\)-smooth; see Section~B, Propositions~B.1, B.4, and B.5 of
\citet{alimisis2025warmup}.

The strict inclusion makes acceleration under \((H_0,H_1)\)-smoothness
distinct from the existing \((L_0,L_1)\)-smooth theory.  For the latter class,
\citet{vankov2024l0l1} obtain
\(\widetilde O(\sqrt{L_0R^2/\varepsilon}
+(L_1R)^{2/3}\log(F_0/\varepsilon))\), while the guarantee of
\citet{gorbunov2024l0l1} involves an exponential dependence on \(L_1R\).
Neither result yields acceleration in terms of \(H_0\) and \(H_1\), leading to
our central question:
\vspace{-1em}
\begin{center}
\emph{Can one obtain accelerated rates for the broader convex\\
\((H_0,H_1)\)-smooth class, and how should the complexity depend on \(H_0\)
and \(H_1\)?}
\end{center}

A naive answer would be to freeze the curvature on the initial sublevel set.  With
\(\widetilde R:=\sup_{x:f(x)\le f(x^0)}\norm{x-x^*}\) and
\(L_{\rm lev}:=H_0+H_1F_0\), classical acceleration gives
\[
  O\left(
    \widetilde R\sqrt{\frac{H_0+H_1F_0}{\varepsilon}}
  \right).
\]
This bound ignores the fact that the \(H_1F(x)\) part should decrease as the
method approaches the solution set.  Our goal is to separate the baseline
curvature \(H_0\) from the gap-dependent curvature \(H_1F(x)\) and obtain, up
to logarithmic and absolute factors,
\[
  \widetilde O\left(
    \widetilde R\left(
      \sqrt{\frac{H_0}{\varepsilon}}
      +
      \sqrt{H_1}\log\frac{F_0}{\varepsilon}
    \right)
  \right).
\]
This bound has the classical accelerated dependence on the baseline curvature
\(H_0\).  The \(H_1\)-term also has an accelerated, strongly-convex-like
form: it contributes \(\widetilde R\sqrt{H_1}\log(F_0/\varepsilon)\), namely a
logarithmic dependence on the desired accuracy, up to the level-set radius
factor \(\widetilde R\).  In particular, the rate avoids the
\(\sqrt{F_0/\varepsilon}\) penalty that would arise from freezing the curvature
at the initial level set.

We also study the coordinate version of this question.  Coordinate descent
methods are designed for large-scale problems in which a full gradient is too
expensive, while a single partial derivative or a small block update can be
computed cheaply and can exploit sparsity or separable structure.  This
granularity is central in huge-scale convex optimization and has led to a rich
accelerated theory under global coordinate smoothness
\citep{nesterov2012coordinate,richtarik2014iteration,fercoq2015accelerated,allenzhu2016evenfaster}.
However, these guarantees rely on fixed coordinate Lipschitz constants.  To the
best of our knowledge, accelerated results for convex \((L_0,L_1)\)-smooth
optimization were not previously available for coordinate methods.  Since the
\((H_0,H_1)\)-smooth model strictly contains the \((L_0,L_1)\)-smooth class,
the coordinate case poses the sharper challenge of obtaining acceleration in
the broader gap-dependent smoothness regime, namely under
\((H_0,H_1)\)-smoothness:
\begin{center}
\emph{Can one obtain coordinate acceleration without replacing the problem by
pessimistic \\coordinate smoothness constants on the initial level set?}
\end{center}

In answering this question, we aim for the coordinate analogue of the
full-gradient accelerated rate: the same \((H_0,H_1)\)-dependence, up to the
standard factor \(d\) for using one partial derivative per iteration.  Thus a
randomized coordinate method should use
\[
  \widetilde O\left(
    d\widetilde R\left(
      \sqrt{\frac{H_0}{\varepsilon}}
      +
      \sqrt{H_1}\log\frac{F_0}{\varepsilon}
    \right)
  \right)
\]
coordinate-gradient calls, preserving acceleration in \(H_0\) and the
strongly-convex-like logarithmic \(H_1\)-term.

\paragraph{Paper organization.}
The rest of the paper is organized as follows.
Section~\ref{sec:related-work-contributions} discusses related work and
summarizes the main contributions.  Section~\ref{sec:problem-setup} introduces
the problem setup and the local smoothness model used by the algorithms.
Section~\ref{sec:accelerated-algorithms} presents the main accelerated
algorithms in the full-gradient and coordinate setups.
Section~\ref{sec:nonuniform-sampling} extends the coordinate method to
non-uniform sampling.  Section~\ref{sec:experiments} reports numerical
experiments.  Section~\ref{sec:discussion-limitations} discusses the scope of
our comparisons, information requirements, and practical variants.
Section~\ref{sec:conclusion} concludes.  All omitted proofs are deferred to
the appendix, which also contains additional experiments and detailed
practical extensions.

\section{Related Work and Main Contributions}
\label{sec:related-work-contributions}

\begin{table*}
    \captionsetup{skip=2pt}
    \caption{\small Comparison of the iteration complexity ($\# N$) of state-of-the-art full-gradient and coordinate algorithms. Notation: $d$ is the problem dimension; $S_{1/2}= \sum_{i=1}^d \sqrt{L_i}$ ($L_i$ are Lipschitz constants  for partial derivatives);  $\widetilde R = \sup_{x \in \mathbb{R}^d: f(x) \leq f(x_0)} \norm{x - x^*}$; $\varepsilon$ is the desired solution accuracy; $R = \norm{x_0 - x^*}$; $F_0 = f(x_0) - f(x^*)$; FM stands for a full-gradient method; CM stands for a coordinate method.}
    \label{tab:table_compare}
    \centering
    \begingroup
    \renewcommand{\arraystretch}{1.18}
    \resizebox{\linewidth}{!}{%
    \begin{tabular}{||l | c | c c | c c c||}
    \hline
    \hline
    \rule{0pt}{2.6ex}\multirow{2}{*}{Reference}
    & \multirow{2}{*}{$\# N$}
    & \multicolumn{2}{c|}{Method}
    & \multirow{2}{*}{$(L_0,L_1)$?}
    & \multirow{2}{*}{$(H_0,H_1)$?}
    & \multirow{2}{*}{Acceleration?}
    \\ \cline{3-4}
    &
    &
    FM?
    & CM?
    &
    &
    &
    \\ \midrule
    \citep{Nesterov_2017}
    & $\Obound{\sqrt{\frac{S_{1/2}^2 \widetilde R^2}{\varepsilon}}}$
    & \redx
    & \greencheck
    & \redx
    & \redx
    & \greencheck
    \\
    \citep{lobanov2024linear}
    & $\Obound{\max\left\{\frac{d L_0 R^2}{\varepsilon}, dL_1R \log \frac{F_0}{\varepsilon} \right\}}$
    & \redx
    & \greencheck
    & \greencheck
    & \redx
    & \redx
    \\
    \citep{lobanov2024linear} $\& $ \citet{vankov2024l0l1}
    & $\Obound{\frac{L_0 R^2}{\varepsilon} + L_1 R \log \frac{F_0}{\varepsilon}}$
    & \greencheck
    & \redx
    & \greencheck
    & \redx
    & \redx
    \\
    \citet{vankov2024l0l1}
    & $\OboundTilde{\sqrt{\frac{L_0 R^2}{\varepsilon}} + \left( L_1R \right)^{2/3} \log \frac{F_0}{\varepsilon}}$
    & \greencheck
    & \redx
    & \greencheck
    & \redx
    & \greencheck
    \\
    \citep{lobanov2026clipped}
    & $\Obound{\frac{H_0 R^2}{\varepsilon} + H_1 R^2 \log\frac{F_0}{\varepsilon}}$
    & \greencheck
    & \redx
    & \greencheck
    & \greencheck
    & \redx
    \\ \midrule
    \multirow{2}{*}{This paper}
    & $\OboundTilde{\sqrt{\frac{H_0 \widetilde R^2}{\varepsilon}} + \sqrt{H_1 \widetilde R^2} \log \frac{F_0}{\varepsilon}}$
    & \greencheck
    & \redx
    & \greencheck
    & \greencheck
    & \greencheck
    \\
    & $\OboundTilde{d\sqrt{\frac{H_0 \widetilde R^2}{\varepsilon}} + d \sqrt{H_1 \widetilde R^2} \log \frac{F_0}{\varepsilon}}$
    & \redx
    & \greencheck
    & \greencheck
    & \greencheck
    & \greencheck
    \\
    \hline
    \hline
    \end{tabular}%
    }
    \endgroup
    \vspace{-0.9em}
\end{table*}
The literature on gradient-descent variants under generalized smoothness is
already extensive and remains very active.  We discuss below the contributions
most relevant to our work and display a representative selection in
Table~\ref{tab:table_compare}.
\vspace{-0.5em}
\paragraph{Full-gradient methods.}
Although our results are for convex objectives, the generalized smoothness line was first
motivated by non-convex training.  In particular,
\citet{zhang2019gradientclipping} introduced \((L_0,L_1)\)-smoothness for twice
differentiable objectives,
\(\norm{\nabla^2 f(x)}\le L_0+L_1\norm{\nabla f(x)}\), as a relaxation of global
\(L\)-smoothness motivated by the empirical correlation between local curvature
and gradient norm during neural-network training.  This condition was later
reformulated in a first-order form by \citet{zhang2020improvedclipping},
extending the \((L_0,L_1)\)-smooth framework to differentiable functions that
need not be twice differentiable; subsequent work developed this direction in
non-convex and stochastic settings
\citep{crawshaw2022signsgd,chen2023generalizedsmooth,wang2023adagradrelaxed,koloskova2023clipping,li2023generalized,li2024adamrelaxed,hubler2024parameteragnostic,vankov2024l0l1}.
For convex \((L_0,L_1)\)-smooth objectives, the non-accelerated theory has
progressed from results that still relied on an additional global \(L\)-smoothness
assumption \citep{koloskova2023clipping,takezawa2024polyak}, to bounds without
global smoothness \citep{gorbunov2024l0l1,lobanov2024linear,vankov2024l0l1}.
\citet{gorbunov2024l0l1} obtained
\(\Obound{L_0R^2/\varepsilon+L_1^2R^2}\), where the second term does not depend on
the desired accuracy.  This was sharpened by
\citet{lobanov2024linear,vankov2024l0l1} to the mixed rate
\(\Obound{L_0R^2/\varepsilon+L_1R\log(F_0/\varepsilon)}\).
Accelerated methods under \((L_0,L_1)\)-smoothness were studied by
\citet{li2023generalized,gorbunov2024l0l1,vankov2024l0l1}.  The first two
bounds couple \(L_0\) and \(L_1\) or involve exponential or level-set factors,
whereas \citet{vankov2024l0l1} separates the two contributions and obtains
\(\widetilde O(\sqrt{L_0R^2/\varepsilon}
+(L_1R)^{2/3}\log(F_0/\varepsilon))\).  These results characterize
acceleration in the native \((L_0,L_1)\) parametrization and do not directly
provide a rate in terms of \(H_0\) and \(H_1\).  The gap-dependent
\((H_0,H_1)\)-smoothness model,
\(\norm{\nabla^2 f(x)}\le H_0+H_1(f(x)-f^*)\), was recently studied in
\citep{vaswani2025armijo,liu2025warmup,alimisis2025warmup} and is weaker than
the \((L_0,L_1)\)-smooth model.  Existing convex guarantees in this regime are
non-accelerated: \citet{lobanov2026clipped} obtains
\(\Obound{H_0R^2/\varepsilon+H_1R^2\log(F_0/\varepsilon)}\).
To the best of our knowledge, no accelerated full-gradient rate was previously
known for convex \((H_0,H_1)\)-smooth objectives.  Our result gives the first
such guarantee,
\(\OboundTilde{\sqrt{H_0\widetilde R^2/\varepsilon}+\sqrt{H_1\widetilde R^2}\log(F_0/\varepsilon)}\),
separating \(H_0\) and \(H_1\) and requiring no additional global smoothness.
\vspace{-0.5em}
\paragraph{Coordinate methods.}
Coordinate descent methods are a natural alternative to full-gradient methods in
large-scale problems, where reading all coordinates of the gradient at every
iteration may be substantially more expensive than querying a single partial
derivative.  Their classical theory is well developed under fixed coordinate
smoothness constants \(L_i\): randomized coordinate descent and its accelerated,
parallel, proximal, and non-uniformly sampled variants achieve rates governed by
coordinate smoothness summaries such as \(S_{1/2}=\sum_i\sqrt{L_i}\)
\citep{nesterov2012coordinate,richtarik2014iteration,fercoq2015accelerated,Nesterov_2017,allenzhu2016evenfaster}.
These guarantees are the coordinate analogue of classical smooth optimization:
they are sharp in the global model, but they require global coordinate
Lipschitz constants and therefore inherit the same pessimism when local
curvature is better described by generalized smoothness.  Much less is known in
this regime.  For convex \((L_0,L_1)\)-smooth objectives,
\citet{lobanov2024linear} obtained a coordinate method with complexity
\(\Obound{\max\{dL_0R^2/\varepsilon,dL_1R\log(F_0/\varepsilon)\}}\), but this is
a non-accelerated rate.  To the best of our knowledge, no previous coordinate
method is both accelerated and applicable to convex \((L_0,L_1)\)-smooth
objectives, and no accelerated coordinate result was known for the broader
\((H_0,H_1)\)-smooth class.  Our coordinate result fills this gap: it gives an
accelerated randomized coordinate method under convex \((H_0,H_1)\)-smoothness,
applies in particular to the \((L_0,L_1)\)-smooth subclass, and pays only the
natural additional factor \(d\) in coordinate-gradient calls relative to the
full-gradient rate.

\section{Problem setup}
\label{sec:problem-setup}

We consider the unconstrained minimization problem
\begin{equation}
\label{eq:problem}
  \min_{x\in\mathbb{R}^d} f(x),
\end{equation}
where the objective \(f:\mathbb{R}^d\to\mathbb{R}\) maps a parameter vector to a
scalar loss value.  Throughout the paper we use the Euclidean norm for vectors
and the induced spectral norm for matrices.

We assume that the solution set
\(X^*:=\argmin_{x\in\mathbb{R}^d}f(x)\) is nonempty and bounded.  We fix an
arbitrary minimizer \(x^*\in X^*\) and write \(f^*:=f(x^*)\),
\(\F(x):=f(x)-f^*\), and \(F_0:=\F(x^0)\), where \(x^0\) is the initial point.

\begin{assumptionbox}
\begin{assumption}[Convexity]
\label{ass:convexity}
The function \(f\) is differentiable and convex, that is,
\[
  f(y)
  \ge
  f(x)+\inner{\nabla f(x)}{y-x},
  \qquad
  \forall x,y\in\mathbb{R}^d .
\]
\end{assumption}
\end{assumptionbox}

As discussed above, this paper focuses on \((H_0,H_1)\)-smoothness
\citep{liu2025warmup,alimisis2025warmup,lobanov2026clipped}.

\begin{assumptionbox}
\begin{assumption}[\((H_0,H_1)\)-smoothness]
\label{ass:h0h1-smoothness}
There exist constants \(H_0\ge0\) and \(H_1>0\).  Let
\(\Ls(\Delta):=2(H_0+H_1\Delta)\) and
\(r_1:=1/((2\sqrt3+\sqrt6)\sqrt{H_1})\).  For all
\(x,y\in\mathbb{R}^d\) satisfying \(\norm{y-x}\le r_1\),
\[
  f(y)
  \le
  f(x)+\inner{\nabla f(x)}{y-x}
  +\frac{\Ls(\F(x))}{2}\norm{y-x}^2 .
\]
\end{assumption}
\end{assumptionbox}

In Assumption~\ref{ass:h0h1-smoothness}, the limiting case \(H_1=0\) is the
usual globally smooth convex setting: one can take \(\Ls(\Delta)=2H_0\) and
\(r_1=\infty\).  The pure \(H_1\)-driven regime
\(H_0=0\) is covered when a minimizer exists.  For positive objectives where a
solution may fail to exist, one should instead use an uncentered positive-function
variant, with the curvature controlled by \(f(x)\) rather than \(f(x)-f^*\), as
in \citet{vaswani2025armijo,vaswani2026steepestadam}.

Assumption~\ref{ass:h0h1-smoothness} is a first-order local-model formulation
and does not require second derivatives.  For twice continuously differentiable
convex functions, it corresponds, up to absolute constants, to
\(\norm{\nabla^2 f(x)}\le H_0+H_1(f(x)-f^*)\); see
\citet[Lemma~2 and Appendix~C.2]{liu2025warmup}.

Moreover, Assumption~\ref{ass:h0h1-smoothness} is strictly weaker than the
\((L_0,L_1)\)-smooth framework.  First, Proposition~B.1 (with \(v=1\)) of
\citet{alimisis2025warmup} shows that every
\((L_0,L_1)\)-smooth function satisfies Assumption~\ref{ass:h0h1-smoothness}
with
\(H_0=L_0+L_0L_1\) and \(H_1=(4L_1^2+L_1)/2\).  Second,
Examples~\ref{ex:one-dimensional-separation}
and~\ref{ex:multidimensional-separation} are convex \((H_0,H_1)\)-smooth
functions but fail to satisfy any \((L_0,L_1)\)-smoothness condition with finite
constants.  The proofs are deferred to Appendix~\ref{app:separation-examples}.

\begin{example}[One-dimensional]
\label{ex:one-dimensional-separation}
Fix \(p\in(1,2]\).  Let
\(q_p(t):=1+(1+t^2)^{p/2}\exp(-(1+t^2)^p\sin^2(\pi t))\), and define
\[
  \phi_p(x)
  =
  \int_0^x\int_0^s q_p(t)\,dt\,ds .
\]
Then \(\phi_p\) is convex and satisfies Assumption~\ref{ass:h0h1-smoothness}
with \(H_0=H_1=2\), but is not \((L_0,L_1)\)-smooth for any finite
\(L_0,L_1\).
\end{example}

\begin{example}[Multidimensional]
\label{ex:multidimensional-separation}
For \(d\ge2\), let
\[
  \Phi_p(x_1,\ldots,x_d)
  =
  \phi_p(x_1)+\frac12\sum_{i=2}^d x_i^2,
\]
where \(\phi_p\) is the function from
Example~\ref{ex:one-dimensional-separation}.  Then \(\Phi_p\) is strongly
convex and satisfies Assumption~\ref{ass:h0h1-smoothness} with \(H_0=H_1=2\),
but is not \((L_0,L_1)\)-smooth for any finite \(L_0,L_1\).
\end{example}

Examples~\ref{ex:one-dimensional-separation}
and~\ref{ex:multidimensional-separation} contain narrow high-curvature peaks.
The Hessian can be large, while the gradient, which integrates curvature over a
neighborhood, remains much smaller.  This separates
\((H_0,H_1)\)-smoothness from assumptions that force curvature to scale with
\(\norm{\nabla f(x)}\).  In machine learning tasks, where losses may be highly
anisotropic and locally sensitive along only a few parameter directions, this
distinction is useful because local sensitivity and accumulated first-order
signal need not vary at the same rate.

\section{Accelerated Algorithms}
\label{sec:accelerated-algorithms}

We now begin the presentation of our main algorithmic results.  We first
introduce a common outer wrapper (see Algorithm~\ref{alg:restart-meta}) that
will be instantiated with both the full-gradient and coordinate inner methods
developed below.

\begingroup
\setlength{\intextsep}{4pt}
\begin{algorithm}[H]
\caption{Restart Meta-Algorithm}
\label{alg:restart-meta}
\begin{algorithmic}[1]
\renewcommand{\algorithmicrequire}{\textbf{Input:}}
\renewcommand{\algorithmicensure}{\textbf{Return:}}
\REQUIRE objective \(f\), initial point \(x^0\), initial gap
\(F_0:=\F(x^0)\), desired accuracy \(\varepsilon\in(0,F_0]\), inner method
\(\mathcal A\), and phase-length rule~\(\mathcal N\)
\STATE \(z_0\gets x^0\), \(S\gets\left\lceil\log_2(F_0/\varepsilon)\right\rceil\)
\FOR{\(s=0,\ldots,S-1\)}
  \STATE \(\Delta_s\gets F_0/2^s\), \(N_s\gets\mathcal N(\Delta_s)\)
  \STATE \(z_{s+1}\gets\mathcal A(f,z_s,N_s)\)
\ENDFOR
\ENSURE \(z_S\)
\end{algorithmic}
\end{algorithm}
\endgroup

Here \(\F(z):=f(z)-f^*\) denotes the optimality gap and
\(\varepsilon\in(0,F_0]\) is the desired accuracy, meaning that the goal is to
return \(z\) satisfying \(\F(z)\le\varepsilon\).  The point \(z_s\) is the
starting point of phase \(s\), and \(\Delta_s:=F_0/2^s\) is the certified gap
level at that phase.  Thus the certified gap levels form a geometric sequence
with ratio \(1/2\), so each phase halves the preceding level.  The rule
\(\mathcal N(\Delta_s)\) specifies the number \(N_s\) of
inner iterations needed at this level, while \(\mathcal A(f,z_s,N_s)\) denotes the corresponding
inner method initialized at \(z_s\).  The number
\(S:=\lceil\log_2(F_0/\varepsilon)\rceil\) is the total number of phases.  The
restart invariant is \(\F(z_s)\le\Delta_s\): each call to \(\mathcal A\) is
chosen to guarantee
\(\F(z_{s+1})\le\Delta_{s+1}=\Delta_s/2\), and hence
\mbox{\(\F(z_S)\le\varepsilon\)}.

Algorithm~\ref{alg:restart-meta} is reminiscent of the recursive
regularization meta-algorithm of \citet{foster2019complexity}: both separate a
geometric outer schedule from an oracle-specific inner solver.  The mechanisms,
however, are different.  \citet{foster2019complexity} change the objective
across stages by adding quadratic regularizers with geometrically increasing weights, whereas
Algorithm~\ref{alg:restart-meta} keeps the objective fixed and geometrically
decreases the certified upper bound \(\Delta_s\) on the optimality gap
\(\F(z_s)\).  Our construction is therefore a restart scheme that exploits the
decreasing effective curvature \(H_0+H_1\Delta_s\), rather than a recursive
regularization procedure.

\subsection{Full-Gradient Setup}
\label{sec:full-gradient}

In this subsection, we instantiate the inner method \(\mathcal A\) from
Algorithm~\ref{alg:restart-meta} in the full-gradient oracle setting, where the
algorithm can query \(\nabla f(x)\) at each iterate.  Our construction builds
on the Accelerated Gradient Method with Small-Dimensional Relaxation of
\citet{nesterov2021primaldual}, which was subsequently extended to convex
\((L_0,L_1)\)-smooth objectives by \citet{vankov2024l0l1}.  The resulting
full-gradient inner method is presented in
Algorithm~\ref{alg:full-gradient-inner}.

\begingroup
\setlength{\intextsep}{4pt}
\begin{algorithm}[H]
\caption{Accelerated Full-Gradient Inner Method}
\label{alg:full-gradient-inner}
\begin{algorithmic}[1]
\renewcommand{\algorithmicrequire}{\textbf{Input:}}
\renewcommand{\algorithmicensure}{\textbf{Return:}}
\REQUIRE objective \(f\), phase start \(z\), iteration budget~\(N\),
parameters~\({\theta\in(0,1]}\),~\({\tau\ge1}\),~and~\({\cstar:=19+12\sqrt2}\)
\STATE \(x_0\gets z\), \(v_0\gets z\), \(A_0\gets0\)
\FOR{\(k=0,\ldots,N-1\)}
  \STATE \(y_k\in\argmin_{\beta\in[0,1]}
    f\bigl(v_k+\beta(x_k-v_k)\bigr)\)
  \STATE \(\widehat M_k\gets
    \max\{2(H_0+H_1\F(y_k)),\norm{\nabla f(y_k)}/r_1\}\)
  \STATE Choose \(M_k\in
    [\widehat M_k,\tau\cstar(H_0+H_1\F(y_k))]\)
  \STATE Choose \(a_{k+1}>0\) such that
    \(M_ka_{k+1}^2=\theta(A_k+a_{k+1})\)
  \STATE \(A_{k+1}\gets A_k+a_{k+1}\)
  \STATE \(x_{k+1}\gets y_k-M_k^{-1}\nabla f(y_k)\)
  \STATE \(v_{k+1}\gets v_k-a_{k+1}\nabla f(y_k)\)
\ENDFOR
\ENSURE \(x_N\)
\end{algorithmic}
\end{algorithm}
\endgroup

Here \(z\) and \(N\) are the phase start and inner iteration budget,
respectively; \(x_k\) is the primal sequence, \(v_k\) is the auxiliary
accelerated sequence, and \(A_k\) is the cumulative weight.  The point \(y_k\)
is obtained by exact one-dimensional minimization over the segment joining
\(v_k\) and \(x_k\).  The two terms in \(\widehat M_k\) serve different roles:
the first controls the local curvature, whereas the second guarantees
\(\norm{x_{k+1}-y_k}=\norm{\nabla f(y_k)}/M_k\le r_1\), as required by the
radius condition in Assumption~\ref{ass:h0h1-smoothness}.  The parameter
\(a_{k+1}\) couples \(A_{k+1}\) to the accepted curvature \(M_k\); the
\(x_{k+1}\)-update is a gradient step, while the \(v_{k+1}\)-update produces
acceleration.  The method returns \(x_N\) to the restart wrapper.

Algorithm~\ref{alg:full-gradient-inner} retains the one-dimensional relaxation
and accelerated coupling of the AGMsDR scheme used by
\citet{vankov2024l0l1}.  The main difference is the choice of the effective
curvature.  In their method, \(M_k\) is recovered after the trial update as
\(M_k:=\norm{\nabla f(y_k)}^2/[2(f(y_k)-f(x_{k+1}))]\).  Algorithm~\ref{alg:full-gradient-inner}
instead chooses \(M_k\) before the update so that it dominates both the
gap-dependent curvature and \(\norm{\nabla f(y_k)}/r_1\); the latter enforces
the radius condition in Assumption~\ref{ass:h0h1-smoothness}.  Moreover, our
method is run for a fixed budget \(N\) as an inner routine of the restart
wrapper.

We now state the convergence guarantee for Algorithm~\ref{alg:restart-meta}
instantiated with the full-gradient method in
Algorithm~\ref{alg:full-gradient-inner}.

\begin{mainresultbox}
\begin{theorem}[Full-gradient complexity]
\label{thm:full-gradient-main}
Suppose Assumptions~\ref{ass:convexity} and
\ref{ass:h0h1-smoothness} hold, and let \(\varepsilon\in(0,F_0]\).  Define the
phase-length rule
\(\mathcal N(\Delta):=\left\lceil
2\widetilde R\sqrt{\frac{\tau\cstar}{\theta}}
\sqrt{\frac{H_0}{\Delta}+H_1}\right\rceil\).
Then Algorithm~\ref{alg:restart-meta}, with
Algorithm~\ref{alg:full-gradient-inner} as its inner method, returns a point
\(z_S\) satisfying
\(\F(z_S)\le\varepsilon\).  Moreover, the total number of iterations satisfies
\[
  N_{\rm tot}
  =
  O\left(
    \widetilde R\sqrt{\frac{\tau\cstar H_0}{\theta\varepsilon}}
    +
    \widetilde R\sqrt{\frac{\tau\cstar H_1}{\theta}}
    \log\frac{F_0}{\varepsilon}
  \right).
\]
\end{theorem}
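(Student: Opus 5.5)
The plan is to prove a per-phase guarantee and then sum over phases. The per-phase claim is: if \(\F(z)\le\Delta\) and \(N\ge\mathcal N(\Delta)\), then Algorithm~\ref{alg:full-gradient-inner} started at \(z\) returns \(x_N\) with \(\F(x_N)\le\Delta/2\). Given this, induction on \(s\) gives the restart invariant \(\F(z_s)\le\Delta_s\), and hence \(\F(z_S)\le\Delta_S\le\varepsilon\). For the count, use \(\sqrt{a+b}\le\sqrt a+\sqrt b\) to write
\[
N_s\le 1+2\widetilde R\sqrt{\tau\cstar/\theta}\,\bigl(\sqrt{H_0 2^s/F_0}+\sqrt{H_1}\bigr).
\]
Summing over \(s<S\), the \(H_0\)-part is a geometric series dominated by its last term \(\sqrt{2^S/F_0}=O(1/\sqrt{\varepsilon})\). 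The \(H_1\)-part contributes \(S=O(\log(F_0/\varepsilon))\) copies, and the additive \(+1\)'s also give \(O(\log(F_0/\varepsilon))\), which is absorbed into the stated bound.

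For the per-phase claim I would follow the AGMsDR estimate-sequence analysis with the potential
\[
\Psi_k:=A_k\F(x_k)+\tfrac12\norm{v_k-x^*}^2 .
\]
The steps are as follows.

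\emph{Monotonicity.} Since \(y_k\) minimizes \(f\) on the segment and \(x_k\) lies on it, \(f(y_k)\le f(x_k)\). The step \(x_{k+1}-y_k\) has norm \(\norm{\nabla f(y_k)}/M_k\le r_1\), so Assumption~\ref{ass:h0h1-smoothness} applies and \(M_k\ge\Ls(\F(y_k))\) gives the descent bound
\[
f(x_{k+1})\le f(y_k)-\tfrac{1}{2M_k}\norm{\nabla f(y_k)}^2 .
\]
Hence \(\F(x_k)\) and \(\F(y_k)\) stay below \(\Delta\) throughout the phase, and every iterate remains in the initial sublevel set.

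\emph{One-step inequality.} Optimality of \(\beta\) gives \(\inner{\nabla f(y_k)}{x_k-y_k}\ge0\) (or \(=0\) in the interior) and \(\inner{\nabla f(y_k)}{v_k-y_k}\ge 0\). Combined with convexity \(f(x^*)\ge f(y_k)+\inner{\nabla f(y_k)}{x^*-y_k}\), the standard computation yields
\[
\Psi_{k+1}\le\Psi_k+\Bigl(\tfrac{a_{k+1}^2}{2}-\tfrac{A_{k+1}}{2M_k}\Bigr)\norm{\nabla f(y_k)}^2 .
\]
The coupling \(M_ka_{k+1}^2=\theta A_{k+1}\) with \(\theta\le1\) makes the bracket nonpositive, so \(A_N\F(x_N)\le\tfrac12\norm{z-x^*}^2\le\tfrac12\widetilde R^2\), using that \(z\) lies in the initial level set.

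\emph{Growth of \(A_k\).} By monotonicity, \(M_k\le\tau\cstar(H_0+H_1\Delta)=:\bar M\). From \(a_{k+1}=\sqrt{\theta A_{k+1}/M_k}\) one gets \(\sqrt{A_{k+1}}-\sqrt{A_k}\ge\tfrac12\sqrt{\theta/\bar M}\), hence \(A_N\ge N^2\theta/(4\bar M)\). It follows that
\[
\F(x_N)\le \frac{2\bar M\widetilde R^2}{\theta N^2}\le\frac{\Delta}{2}
\quad\text{whenever}\quad
N\ge 2\widetilde R\sqrt{\frac{\tau\cstar(H_0+H_1\Delta)}{\theta\Delta}},
\]
which is exactly \(\mathcal N(\Delta)\).

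\textbf{Main obstacle.} The key step is justifying that the interval \([\widehat M_k,\tau\cstar(H_0+H_1\F(y_k))]\) is nonempty. This requires
\[
\norm{\nabla f(y_k)}/r_1\le\cstar(H_0+H_1\F(y_k)).
\]
To show it, I would bound the gradient using the local upper model: take a step of length \(t\le r_1\) along \(-\nabla f(y_k)\) and use \(f\ge f^*\), which gives
\[
\norm{\nabla f(y_k)}\le \F(y_k)/t+ t(H_0+H_1\F(y_k)).
\]
Then optimize \(t\) subject to \(t\le r_1\). With \(r_1\sqrt{H_1}=1/(2\sqrt3+\sqrt6)\), the two cases (interior optimum versus \(t=r_1\)) should produce exactly the constant \(\cstar=19+12\sqrt2=(2\sqrt3+\sqrt6)^2/\ldots\), suitably arranged. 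I expect this constant bookkeeping to be the delicate point, together with the boundary case of the line search.
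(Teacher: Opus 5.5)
Your plan follows the paper's proof: the same potential $A_k\F(x_k)+\frac12\norm{v_k-x^*}^2$, descent and monotonicity argument, weight growth $A_N\ge\theta N^2/(4\tau\cstar(H_0+H_1\Delta))$ and phase contraction, with the same auxiliary curvature bound as the key fact. Your coupling derivation from $f(y_k)\le f(x_k)$ and $\inner{\nabla f(y_k)}{v_k-y_k}\ge0$ is equivalent to the paper's use of the segment point $w_k=(A_kx_k+a_{k+1}v_k)/A_{k+1}$. The step you flag as delicate is actually one line, with no optimization over $t$. Taking $t=r_1$ gives $\norm{\nabla f(x)}/r_1\le \F(x)/r_1^2+H_0+H_1\F(x)=H_0+(19+12\sqrt2)H_1\F(x)$, because $r_1^{-2}=(2\sqrt3+\sqrt6)^2H_1=(18+12\sqrt2)H_1$. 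Thus $\cstar=(2\sqrt3+\sqrt6)^2+1$, and $\cstar>2$ handles the other entry of $\widehat M_k$.

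The step that is not justified is the claim that the $S$ additive ones from the ceilings are ``absorbed into the stated bound.'' The stated bound has no standalone $\log(F_0/\varepsilon)$ term. Without further information, the coefficients $\widetilde R\sqrt{\tau\cstar H_1/\theta}$ and $\widetilde R\sqrt{\tau\cstar H_0/(\theta\varepsilon)}$ could a priori be too small to dominate $S$. What you need is a lower bound on the quantity $T_s$ inside the ceiling, and the paper derives it from ingredients you already have. For $F_0>0$, convexity and Cauchy--Schwarz give $F_0\le\norm{\nabla f(x^0)}\widetilde R$. A gradient step from $x^0$ with curvature $\tau\cstar(H_0+H_1F_0)$ is admissible by the curvature bound, so the descent inequality together with $f\ge f^*$ gives $\norm{\nabla f(x^0)}^2\le 2\tau\cstar(H_0+H_1F_0)F_0$. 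Combining the two yields $\widetilde R^2(H_0/F_0+H_1)\ge 1/(2\tau\cstar)$, hence $T_0\ge\sqrt{2/\theta}\ge\sqrt2$. Since $T_s$ is nondecreasing in $s$, $N_s=\lceil T_s\rceil\le 2T_s$, and your summation of the un-ceiled terms then completes the complexity bound.
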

\end{mainresultbox}

Here \(\widetilde R:=\sup_{x:f(x)\le f(x^0)}\norm{x-x^*}\) denotes the finite
radius of the initial sublevel set, and
\(N_{\rm tot}:=\sum_{s=0}^{S-1}N_s\) counts accelerated iterations across all
restart phases.  The analysis treats the exact segment solve in line~3 of
Algorithm~\ref{alg:full-gradient-inner} as one call per iteration.  For fixed
\(\theta\) and \(\tau\),
Theorem~\ref{thm:full-gradient-main} gives
\(N_{\rm tot}=O\bigl(\sqrt{H_0\widetilde R^2/\varepsilon}
+\sqrt{H_1\widetilde R^2}\log(F_0/\varepsilon)\bigr)\).
With the implementable inexact line search discussed in
Section~\ref{sec:discussion-limitations} and a one-dimensional solver whose
cost is logarithmic in accuracy, the same bound holds for the total number of
oracle calls, including line-search calls, with \(O\) replaced by
\(\widetilde O\).
When \(H_1=0\), the first term matches the classical Nesterov dependence
\citep{nesterov1983method}; the second replaces the linear \(H_1R^2\)
dependence of non-accelerated analyses by the square-root dependence
\(\sqrt{H_1\widetilde R^2}\), with only logarithmic dependence on accuracy
\citep{lobanov2026clipped}.  These
comparisons concern the dependence on
smoothness and accuracy: our bound uses \(\widetilde R\), whereas the cited
bounds use \(R:=\norm{x^0-x^*}\).  The proof is in
Appendix~\ref{app:full-gradient-proof}.

\vspace{-0.4em}
\paragraph{Proof sketch.}
Consider phase \(s\), initialized at \(z_s\) with
\(\F(z_s)\le\Delta_s\).  The curvature choice keeps each gradient step within
the local radius and satisfies
\(M_k\le\tau\cstar(H_0+H_1\F(y_k))\); exact segment minimization and the local
model give \(f(x_{k+1})\le f(y_k)\le f(x_k)\), hence
\(\F(y_k)\le\Delta_s\).  The accelerated potential and weight growth yield
\(A_{N_s}\F(z_{s+1})\le\frac12\norm{z_s-x^*}^2\) and
\(A_{N_s}\ge\theta N_s^2/[4\tau\cstar(H_0+H_1\Delta_s)]\).  Together with
\(\norm{z_s-x^*}\le\widetilde R\) and \(N_s=\mathcal N(\Delta_s)\), these
bounds give
\(\F(z_{s+1})\le
2\tau\cstar\widetilde R^2(H_0+H_1\Delta_s)/(\theta N_s^2)
\le\Delta_s/2\).  Hence
\(\F(z_s)\le\Delta_s\) holds by induction, and after
\(S=\lceil\log_2(F_0/\varepsilon)\rceil\) phases we obtain
\(\F(z_S)\le\varepsilon\).  Finally, summing the phase lengths and using
\(\sum_{s=0}^{S-1}\Delta_s^{-1/2}=O(\varepsilon^{-1/2})\) together with
\(S=O(\log(F_0/\varepsilon))\) yields the stated iteration complexity.

\subsection{Coordinate Setup}
\label{sec:coordinate}

Forming a full gradient may be prohibitively expensive in high-dimensional
problems.  We therefore instantiate the inner method \(\mathcal A\) using
function values and one coordinate derivative \(\nabla_i f(x)\) per iteration,
which can reduce the oracle cost whenever a partial derivative is substantially
cheaper to evaluate than the full gradient.  Such savings are possible even for
dense data: for structured problems, \citet{Nesterov_2017} show that both the
coordinate-oracle cost and the remaining algorithmic operations can scale
linearly with the problem dimension; on such instances, their accelerated
coordinate method can outperform standard fast gradient methods.
Line-search function differences are likewise inexpensive for models such as
quadratic loss and logistic regression \citep{fountoulakis2015flexible}.

The resulting coordinate inner method is presented in
Algorithm~\ref{alg:coordinate-inner}.  It retains the exact one-dimensional
minimization over the segment joining \(v_k\) and \(x_k\), as well as the
accelerated weight construction of Algorithm~\ref{alg:full-gradient-inner}, but
replaces each full-gradient update by a uniformly sampled coordinate update.
It is also related to the accelerated coordinate method of
\citet{Nesterov_2017}: both use
one random partial derivative to update the primal and auxiliary sequences.
Their method is based on fixed coordinate smoothness constants, whereas ours uses the
current gap-dependent curvature, explicitly respects the local-model radius,
and is placed inside the restart wrapper.  This subsection uses uniform
sampling; the non-uniform extension is given in
Section~\ref{sec:nonuniform-sampling}.

\begingroup
\setlength{\intextsep}{4pt}
\begin{algorithm}[H]
\caption{Accelerated Coordinate Inner Method}
\label{alg:coordinate-inner}
\begin{algorithmic}[1]
\renewcommand{\algorithmicrequire}{\textbf{Input:}}
\renewcommand{\algorithmicensure}{\textbf{Return:}}
\REQUIRE objective \(f\), phase start \(z\), iteration budget~\(N\),
parameters~\({\theta\in(0,1]}\),~\({\tau\ge1}\),~and~\({\cstar:=19+12\sqrt2}\)
\STATE \(x_0\gets z\), \(v_0\gets z\), \(A_0\gets0\)
\FOR{\(k=0,\ldots,N-1\)}
  \STATE \(y_k\in\argmin_{\beta\in[0,1]}
    f\bigl(v_k+\beta(x_k-v_k)\bigr)\)
  \STATE \(M_k\gets\tau\cstar(H_0+H_1\F(y_k))\)
  \STATE Choose \(a_{k+1}>0\) s.t.
    \(d^2M_ka_{k+1}^2=\theta(A_k+a_{k+1})\)
  \STATE \(A_{k+1}\gets A_k+a_{k+1}\)
  \STATE Sample \(i_k\sim{\rm Unif}\{1,\ldots,d\}\)
  \STATE \(x_{k+1}\gets
    y_k-M_k^{-1}\nabla_{i_k}f(y_k)e_{i_k}\)
  \STATE \(v_{k+1}\gets
    v_k-da_{k+1}\nabla_{i_k}f(y_k)e_{i_k}\)
\ENDFOR
\ENSURE \(x_N\)
\end{algorithmic}
\end{algorithm}
\endgroup

Here \(z,N,x_k,v_k,A_k\), and \(y_k\) have the same roles as in
Algorithm~\ref{alg:full-gradient-inner}; \(e_i\) is the \(i\)-th standard basis
vector, and \(i_k\) is sampled uniformly.  The single queried derivative
\(\nabla_{i_k}f(y_k)\) is reused in both updates.  Unlike the full-gradient
method, the coordinate method does not form \(\widehat M_k\) from
\(\norm{\nabla f(y_k)}\).  Using the gradient bound implied by
Assumption~\ref{ass:h0h1-smoothness}, it fixes \(M_k\) and \(a_{k+1}\) before
sampling, so \(\lvert\nabla_i f(y_k)\rvert/M_k\le r_1\) for all \(i\) and both
are independent of \(i_k\).  The factors \(d\) and \(d^2\) account
for uniform sampling, while \(\tau\) and \(\theta\) control curvature inflation
and accelerated coupling, respectively.  The method returns \(x_N\) to the
restart wrapper.

We now state the convergence guarantee for
Algorithm~\ref{alg:restart-meta} with the coordinate inner method.

\begin{mainresultbox}
\begin{theorem}[Coordinate complexity]
\label{thm:coordinate-main}
Suppose Assumptions~\ref{ass:convexity} and
\ref{ass:h0h1-smoothness} hold, and let \(\varepsilon\in(0,F_0]\).  Define
\(\mathcal N(\Delta):=\left\lceil
2d\widetilde R\sqrt{\frac{\tau\cstar}{\theta}}
\sqrt{\frac{H_0}{\Delta}+H_1}\right\rceil\).
Then Algorithm~\ref{alg:restart-meta}, with
Algorithm~\ref{alg:coordinate-inner} as its inner method, returns a point
\(z_S\) satisfying \(\E[f(z_S)-f(x^*)]\le\varepsilon\).  Moreover, the total
number of iterations satisfies
\[
  N_{\rm tot}^{\rm coord}
  =
  O\left(
    d\widetilde R\sqrt{\frac{\tau\cstar H_0}{\theta\varepsilon}}
    +
    d\widetilde R\sqrt{\frac{\tau\cstar H_1}{\theta}}
    \log\frac{F_0}{\varepsilon}
  \right).
\]
\end{theorem}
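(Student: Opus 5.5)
The proof mirrors the full-gradient argument of Theorem~\ref{thm:full-gradient-main}, adapted to a randomized potential in the style of \citet{Nesterov_2017}. It has three parts: a per-iteration estimate, a bound on the weights, and the phase induction.

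\textbf{Step 1: deterministic facts along any trajectory.} Fix a phase with start $z$ satisfying $\F(z)\le\Delta$. First I show that every step stays inside the local-model radius. For $y$ in the sublevel set, Assumption~\ref{ass:h0h1-smoothness} applied at $y$ with the step $y-t\nabla f(y)$, $t=\min\{1/\Ls(\F(y)),\, r_1/\norm{\nabla f(y)}\}$, together with $f\ge f^*$, gives a gradient bound of the form
\[
\norm{\nabla f(y)}\le \max\bigl\{\sqrt{2\Ls(\F(y))\F(y)},\ 2\F(y)/r_1\bigr\}.
\]
Substituting $r_1$ and $\Ls$, this bound is at most $r_1\tau\cstar(H_0+H_1\F(y))$. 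The constant $\cstar=19+12\sqrt2=(2\sqrt3+\sqrt6)^2+1$ is presumably tuned exactly for this, via $\sqrt{2\Ls\F}\le r_1\cstar(H_0+H_1\F)$ by AM--GM. Hence $|\nabla_i f(y_k)|/M_k\le r_1$ for every $i$.

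Next, applying the local model to the coordinate step gives
\[
f(x_{k+1})\le f(y_k)-\frac{|\nabla_{i_k}f(y_k)|^2}{2M_k}\Bigl(2-\frac{\Ls(\F(y_k))}{M_k}\Bigr)\le f(y_k)-\frac{|\nabla_{i_k}f(y_k)|^2}{2M_k},
\]
since $M_k\ge \Ls$. Combined with the segment minimization $f(y_k)\le f(x_k)$, this shows that $f$ is monotone pathwise. So all $x_k$ and $y_k$ lie in the initial sublevel set, $\F(y_k)\le\Delta$, and $M_k\le\tau\cstar(H_0+H_1\Delta)$ deterministically.

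\textbf{Step 2: randomized potential.} I use $\Psi_k=A_k\F(x_k)+\frac12\norm{v_k-x^*}^2$ and show $\E_{i_k}[\Psi_{k+1}]\le\Psi_k$. Expanding,
\[
\tfrac12\norm{v_{k+1}-x^*}^2=\tfrac12\norm{v_k-x^*}^2-da_{k+1}\nabla_{i_k}f(y_k)(v_k-x^*)_{i_k}+\tfrac{d^2a_{k+1}^2}{2}|\nabla_{i_k}f(y_k)|^2 .
\]
Taking the expectation over $i_k$, the cross term becomes $-a_{k+1}\inner{\nabla f(y_k)}{v_k-x^*}$, and the quadratic term becomes $\frac{d a_{k+1}^2}{2}\norm{\nabla f(y_k)}^2$. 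From Step 1, $\E[f(x_{k+1})]\le f(y_k)-\frac{1}{2dM_k}\norm{\nabla f(y_k)}^2$.

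The segment-minimization optimality conditions give $\inner{\nabla f(y_k)}{x_k-y_k}\ge0$ and $\inner{\nabla f(y_k)}{v_k-y_k}\ge0$; this is the AGMsDR trick and needs no coupling coefficient. With convexity, $A_k\F(y_k)\le A_k\F(x_k)$, and $a_{k+1}\inner{\nabla f(y_k)}{v_k-x^*}\ge a_{k+1}\inner{\nabla f(y_k)}{y_k-x^*}\ge a_{k+1}\F(y_k)$. The gradient-square terms cancel when $A_{k+1}/(dM_k)\ge d a_{k+1}^2$, i.e. $d^2M_ka_{k+1}^2\le A_{k+1}$, which holds with $\theta\le1$.

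Because $M_k$ and $a_{k+1}$ are measurable before sampling, the tower rule applies and gives $\E[A_N\F(x_N)]\le\frac12\norm{z-x^*}^2\le\frac12\widetilde R^2$. Here $A_N$ is \emph{random}, but it is deterministically lower bounded. From $d^2M_ka_{k+1}^2=\theta A_{k+1}$ and $M_k\le \bar M:=\tau\cstar(H_0+H_1\Delta)$, the standard argument $\sqrt{A_{k+1}}-\sqrt{A_k}\ge \frac{a_{k+1}}{2\sqrt{A_{k+1}}}$ yields $A_N\ge \theta N^2/(4d^2\bar M)$. Therefore $\E[\F(x_N)]\le 2d^2\bar M\widetilde R^2/(\theta N^2)\le\Delta/2$ for $N=\mathcal N(\Delta)$.

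\textbf{Step 3: restart induction and the main obstacle.} This is where the difficulty lies. The invariant now holds only in expectation, but Step 1 used the deterministic bound $\F(z_s)\le\Delta_s$ to cap $M_k$. Pathwise monotonicity still gives $\F(y_k)\le\F(z_s)$, so the inner bound becomes conditional: $\E[\F(z_{s+1})\mid z_s]\le 2d^2\tau\cstar\widetilde R^2(H_0+H_1\F(z_s))/(\theta N_s^2)$. This is affine in $\F(z_s)$, so taking total expectations gives
\[
\E\F(z_{s+1})\le 2d^2\tau\cstar\widetilde R^2(H_0+H_1\E\F(z_s))/(\theta N_s^2)\le\Delta_s/2
\]
whenever $\E\F(z_s)\le\Delta_s$. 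The point to check carefully is that $A_N$'s lower bound uses $\bar M(\F(z_s))$, which makes the inequality $\F(x_N)\le \frac{\widetilde R^2}{2A_N}$ hold conditionally, with the affine form surviving. Indeed $\E[\F(x_N)\mid z_s]\le \E[A_N\F(x_N)\mid z_s]/A_{\min}(z_s)$. Also, $\norm{z_s-x^*}\le\widetilde R$ holds pathwise by monotonicity.

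Induction then gives $\E\F(z_S)\le\varepsilon$. The complexity follows exactly as in Theorem~\ref{thm:full-gradient-main}, by summing $N_s\le 1+2d\widetilde R\sqrt{\tau\cstar/\theta}(\sqrt{H_0/\Delta_s}+\sqrt{H_1})$ and using the geometric sum $\sum_s\Delta_s^{-1/2}=O(\varepsilon^{-1/2})$ and $S=O(\log(F_0/\varepsilon))$.
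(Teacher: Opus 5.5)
Your architecture matches the paper's: pathwise monotonicity, the supermartingale step $\E[\Psi_{k+1}\mid\mathcal F_k]\le\Psi_k$, a realization-wise lower bound on $A_N$ through $\F(z_s)$, a phase bound affine in $\F(z_s)$, and induction in total expectation. Deriving the coupling from the two endpoint conditions, rather than from the single condition at $w_k=(A_kx_k+a_{k+1}v_k)/A_{k+1}$, is an equivalent variant.

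The genuine gap is the radius verification in Step~1. In the branch $t=r_1/\norm{\nabla f(y)}$ you bound $r_1(H_0+H_1\F(y))$ by $\norm{\nabla f(y)}/2$ and keep only $2\F(y)/r_1$. This loses too much for the constant the algorithm fixes. Take $H_0=0$: the claim $2\F(y)/r_1\le r_1\tau\cstar H_1\F(y)$ is equivalent to $2r_1^{-2}=(36+24\sqrt2)H_1\le\tau(19+12\sqrt2)H_1$. That is false for every admissible $\tau<(36+24\sqrt2)/(19+12\sqrt2)\approx1.94$. At $\tau=1$ it fails as soon as $H_1\F(y)\gtrsim1.06\,H_0$. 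So for $\tau$ near $1$ you have not established $\lvert\nabla_i f(y_k)\rvert/M_k\le r_1$. Without it the local model cannot be applied to the coordinate step, and the descent inequality, pathwise monotonicity, and everything built on them are unsupported.

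The repair is not to split cases. Evaluate the local model at the boundary point $y-r_1\nabla f(y)/\norm{\nabla f(y)}$ and use $f\ge f^*$. This gives $\norm{\nabla f(y)}/r_1\le\F(y)/r_1^2+H_0+H_1\F(y)=H_0+(19+12\sqrt2)H_1\F(y)\le\cstar(H_0+H_1\F(y))$, hence $\lvert\nabla_i f(y_k)\rvert/M_k\le r_1/\tau\le r_1$ for all $\tau\ge1$. This is where the $+1$ in $\cstar=(2\sqrt3+\sqrt6)^2+1$ comes from, not AM--GM on the $\sqrt{2\Ls(\F(y))\F(y)}$ branch.

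A smaller omission: writing $N_s\le1+T_s$ leaves an additive $S=O(\log(F_0/\varepsilon))$ that the stated bound does not visibly contain. The paper absorbs the ceiling by showing $T_0\ge d\sqrt{2/\theta}\ge\sqrt2$, using $F_0\le\norm{\nabla f(x^0)}\widetilde R$ and $\norm{\nabla f(x^0)}^2\le2\tau\cstar(H_0+H_1F_0)F_0$. Then $T_s\ge T_0$ gives $N_s\le2T_s$ for every phase.
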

\end{mainresultbox}

For fixed \(\theta\) and \(\tau\),
Theorem~\ref{thm:coordinate-main} gives
\(N_{\rm tot}^{\rm coord}=O\bigl(
d\sqrt{H_0\widetilde R^2/\varepsilon}
+d\sqrt{H_1\widetilde R^2}\log(F_0/\varepsilon)\bigr)\), where
\(N_{\rm tot}^{\rm coord}\) counts coordinate-update iterations across all
restart phases.  Each iteration uses one partial derivative for the main
updates and treats the exact segment solve in line~3 of
Algorithm~\ref{alg:coordinate-inner} as one call.  With the inexact line search
discussed in Section~\ref{sec:discussion-limitations} and logarithmic solver
cost, the same bound holds for the total number of oracle calls, including
line-search calls, with \(O\) replaced by \(\widetilde O\).  Relative to
Theorem~\ref{thm:full-gradient-main}, the coordinate rate pays the standard
factor \(d\); both bounds use the same level-set radius \(\widetilde R\).  When
\(H_1=0\), its dependence matches the classical uniform-sampling accelerated
coordinate rate \citep{nesterov2012coordinate} and agrees with
\citet{Nesterov_2017} when all coordinate smoothness constants coincide.  See
Appendix~\ref{app:coordinate-proof} for the proof.

To the best of our knowledge, Theorem~\ref{thm:coordinate-main} is the first
accelerated coordinate guarantee for the general class of convex
\((H_0,H_1)\)-smooth objectives.  Recent coordinate analyses under related
non-uniform smoothness conditions concern normalized Gauss--Southwell updates
and special losses \citep{vaswani2026steepestadam}, rather than accelerated
complexity for this general convex class.

\vspace{-0.4em}
\paragraph{Proof sketch.}
Let \(\mathcal F_k\) be the \(\sigma\)-algebra generated by all randomness
revealed before sampling \(i_k\).  Uniform sampling gives
\(\E[d\nabla_{i_k}f(y_k)e_{i_k}\mid\mathcal F_k]=\nabla f(y_k)\) and
\(\E[(\nabla_{i_k}f(y_k))^2\mid\mathcal F_k]
=\norm{\nabla f(y_k)}^2/d\).  Since \(M_k\) and
\(a_{k+1}\) are fixed before sampling, these identities combine with the local
descent inequality and \(d^2M_ka_{k+1}^2=\theta A_{k+1}\) to give
\(\E[\Phi_{k+1}\mid\mathcal F_k]\le\Phi_k\), where
\(\Phi_k:=A_k\F(x_k)+\frac12\norm{v_k-x^*}^2\).  For a phase starting at
\(z_s\), this gives \(\E[\F(z_{s+1})\mid z_s]\le
2d^2\tau\cstar\norm{z_s-x^*}^2(H_0+H_1\F(z_s))/(\theta N_s^2)\).  Using
\(\norm{z_s-x^*}\le\widetilde R\) and the induction hypothesis
\(\E\F(z_s)\le\Delta_s\), the choice
\(N_s=\mathcal N(\Delta_s)\) reduces the expected gap by one half.
The geometric restart argument and summation of the phase lengths then proceed
as in the full-gradient proof, with the additional factor \(d\).

\section{Extension to Non-Uniform Sampling}
\label{sec:nonuniform-sampling}

Uniform sampling ignores heterogeneity across coordinates.  Under classical
coordinate smoothness, sampling coordinate \(i\) proportionally to the square
root of its smoothness constant can replace a worst-coordinate dependence by a
sum of coordinate contributions
\citep{Nesterov_2017,allenzhu2016evenfaster}.  Under \((H_0,H_1)\)-smoothness,
both the baseline and gap-dependent curvature may be heterogeneous.  We now
adapt this importance-sampling principle to that setting.

\begin{assumptionbox}
\begin{assumption}[Coordinate \((H_0,H_1)\)-smoothness]
\label{ass:coordinate-h0h1-smoothness}
For every \(i\in\{1,\ldots,d\}\), there exist constants
\(H_{0,i}\ge0\) and \(H_{1,i}>0\).  Let
\(\Ls_i(\Delta):=2(H_{0,i}+H_{1,i}\Delta)\) and
\(r_{1,i}:=1/((2\sqrt3+\sqrt6)\sqrt{H_{1,i}})\).  For every
\(x\in\mathbb R^d\) and \(h\in\mathbb R\), if
\(\lvert h\rvert\le r_{1,i}\), then
\[
  f(x+he_i)
  \le
  f(x)+h\nabla_i f(x)
  +\frac{\Ls_i(\F(x))}{2}h^2.
\]
\end{assumption}
\end{assumptionbox}

The case \(H_{1,i}=0\) is covered by \(\Ls_i(\Delta)=2H_{0,i}\) and
\(r_{1,i}=\infty\).  Assumption~\ref{ass:h0h1-smoothness} implies
Assumption~\ref{ass:coordinate-h0h1-smoothness} with
\(H_{j,i}=H_j\) for all \(i\) and \(j\in\{0,1\}\); the coordinate condition is
imposed only along \(x+he_i\) and allows coordinate-specific constants and
radii.  Let
\(S_{1/2}^{(0)}:=\sum_{i=1}^d H_{0,i}^{1/2}\) and
\(S_{1/2}^{(1)}:=\sum_{i=1}^d H_{1,i}^{1/2}\).  If
\(H_{j,i}\le\rho_iH_j\) for \(j\in\{0,1\}\) and \(\rho_i\in[0,1]\), then
\(S_{1/2}^{(j)}\le\sqrt{H_j}\sum_i\sqrt{\rho_i}\).  Hence, if curvature is
concentrated on \(m\ll d\) coordinates and
\(\sum_i\sqrt{\rho_i}=O(m)\), non-uniform sampling replaces the uniform factor
\(d\) by \(m\), as in the classical heterogeneous-smoothness setting
\citep{Nesterov_2017,allenzhu2016evenfaster}.

This assumption differs from the coordinate \((L_0,L_1)\)-smoothness condition
used by \citet[Assumption~1.3]{lobanov2024linear}, which bounds
\(\lvert\nabla_i f(x+he_i)-\nabla_i f(x)\rvert\) by
\((L_{0,i}+L_{1,i}\lvert\nabla_i f(x)\rvert)\lvert h\rvert\) inside a common
coordinate radius.  Their condition is gradient-dependent, whereas
Assumption~\ref{ass:coordinate-h0h1-smoothness} is gap-dependent and stated
directly as the local upper model required by the algorithm.  Both recover
standard coordinate smoothness when the growth constants vanish.

We assume known valid upper bounds on \(\{H_{0,i},H_{1,i}\}_{i=1}^d\), as in
classical non-uniform accelerated coordinate methods
\citep{Nesterov_2017,allenzhu2016evenfaster}.  Applying the local model at the
coordinate-radius boundary gives
\(\lvert\nabla_i f(x)\rvert/r_{1,i}\le
\cstar(H_{0,i}+H_{1,i}\F(x))\) for every \(i\), where
\(\cstar:=19+12\sqrt2\); the radius condition is automatic when
\(H_{1,i}=0\).  This leads to Algorithm~\ref{alg:nonuniform-coordinate-inner}.

\begingroup
\setlength{\intextsep}{4pt}
\begin{algorithm}[H]
\caption{Non-Uniform Coordinate Inner Method}
\label{alg:nonuniform-coordinate-inner}
\begin{algorithmic}[1]
\renewcommand{\algorithmicrequire}{\textbf{Input:}}
\renewcommand{\algorithmicensure}{\textbf{Return:}}
\REQUIRE objective \(f\), phase start \(z\), iteration budget~\(N\),
parameters~\({\theta\in(0,1]}\),~\({\tau\ge1}\),
and~\({\cstar:=19+12\sqrt2}\)
\STATE \(x_0\gets z\), \(v_0\gets z\), \(A_0\gets0\)
\FOR{\(k=0,\ldots,N-1\)}
  \STATE \(y_k\in\argmin_{\beta\in[0,1]}
    f\bigl(v_k+\beta(x_k-v_k)\bigr)\)
  \STATE \(M_{k,i}\gets\tau\cstar(H_{0,i}+H_{1,i}\F(y_k))\),
    \(i=1,\ldots,d\)
  \STATE \(\mathcal{M}_k\gets\sum_{i=1}^d\sqrt{M_{k,i}}\),
    \(p_{k,i}\gets\sqrt{M_{k,i}}/\mathcal{M}_k\)
  \STATE Choose \(a_{k+1}>0\) s.t.
    \(\mathcal{M}_k^2a_{k+1}^2=\theta(A_k+a_{k+1})\)
  \STATE \(A_{k+1}\gets A_k+a_{k+1}\)
  \STATE Sample \(i_k\sim p_k\)
  \STATE \(x_{k+1}\gets y_k-M_{k,i_k}^{-1}
    \nabla_{i_k}f(y_k)e_{i_k}\)
  \STATE \(v_{k+1}\gets v_k-a_{k+1}p_{k,i_k}^{-1}
    \nabla_{i_k}f(y_k)e_{i_k}\)
\ENDFOR
\ENSURE \(x_N\)
\end{algorithmic}
\end{algorithm}
\endgroup

Here \(M_{k,i}\) is the local curvature assigned to coordinate \(i\), \(\mathcal{M}_k\)
normalizes the probabilities, and all \(M_{k,i}\), \(p_{k,i}\), and
\(a_{k+1}\) are fixed before sampling \(i_k\).  The choice
\(p_{k,i}\propto\sqrt{M_{k,i}}\) balances coordinate descent with the variance
of the auxiliary update.  If \(H_{0,i}=H_0\) and \(H_{1,i}=H_1\) for all \(i\),
then \(p_{k,i}=1/d\), \(\mathcal{M}_k=d\sqrt{M_k}\), and
Algorithm~\ref{alg:nonuniform-coordinate-inner} reduces to
Algorithm~\ref{alg:coordinate-inner}.  When all \(H_{1,i}=0\), the probabilities
become proportional to \(\sqrt{H_{0,i}}\), matching the classical
square-root importance sampling of \citet{Nesterov_2017}.  Unlike their fixed
probabilities, however, our probabilities adapt to the current gap through
\(M_{k,i}\).

We next state the non-uniform coordinate guarantee.

\begin{mainresultbox}
\begin{theorem}[Non-uniform coordinate complexity]
\label{thm:nonuniform-coordinate-main}
Suppose Assumptions~\ref{ass:convexity} and
\ref{ass:coordinate-h0h1-smoothness} hold, and let
\(\varepsilon\in(0,F_0]\).  Define
\(\mathcal N(\Delta):=\left\lceil
2\widetilde R\sqrt{\frac{\tau\cstar}{\theta}}
(S_{1/2}^{(0)}/\sqrt\Delta+S_{1/2}^{(1)})\right\rceil\).
Then Algorithm~\ref{alg:restart-meta}, with
Algorithm~\ref{alg:nonuniform-coordinate-inner} as its inner method, returns a
point \(z_S\) satisfying \(\E[f(z_S)-f(x^*)]\le\varepsilon\).  Moreover, the
total number of coordinate-update iterations satisfies
\[
  N_{\rm tot}^{\rm nonunif}
  =
  O\left(
    \widetilde R\sqrt{\frac{\tau\cstar}{\theta}}
    \left(
      \frac{S_{1/2}^{(0)}}{\sqrt\varepsilon}
      +
      S_{1/2}^{(1)}\log\frac{F_0}{\varepsilon}
    \right)
  \right).
\]
\end{theorem}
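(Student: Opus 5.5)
The proof follows the scheme used for Theorem~\ref{thm:coordinate-main}: a one-phase expected potential decrease for Algorithm~\ref{alg:nonuniform-coordinate-inner}, then the restart induction from Algorithm~\ref{alg:restart-meta}, then summation of the phase lengths.

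\textbf{Step 1 (descent, with no curvature slack).} Fix \(k\) and let \(\mathcal F_k\) be the randomness revealed before sampling \(i_k\). I first check the radius condition. Evaluating the local model of Assumption~\ref{ass:coordinate-h0h1-smoothness} at \(h=\pm r_{1,i}\) and using \(f\ge f^*\) gives
\[
\lvert\nabla_i f(y_k)\rvert/r_{1,i}\le\cstar(H_{0,i}+H_{1,i}\F(y_k))\le M_{k,i}.
\]
Hence \(\lvert\nabla_{i}f(y_k)\rvert/M_{k,i}\le r_{1,i}\), so the step is admissible. Since \(M_{k,i}\ge\Ls_i(\F(y_k))\), the local model then gives
\[
f(x_{k+1})\le f(y_k)-\frac{(\nabla_{i_k}f(y_k))^2}{2M_{k,i_k}}.
\]
Taking expectation with \(p_{k,i}=\sqrt{M_{k,i}}/\mathcal M_k\):
\[
\E[f(x_{k+1})\mid\mathcal F_k]\le f(y_k)-\frac1{2\mathcal M_k}\sum_i\frac{(\nabla_if(y_k))^2}{\sqrt{M_{k,i}}}.
\]

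\textbf{Step 2 (estimate sequence).} For the auxiliary sequence, \(\E[p_{k,i_k}^{-1}\nabla_{i_k}f\,e_{i_k}\mid\mathcal F_k]=\nabla f(y_k)\). The second moment is
\[
\E\bigl[p_{k,i_k}^{-2}(\nabla_{i_k}f)^2\bigr]=\mathcal M_k\sum_i\frac{(\nabla_i f)^2}{\sqrt{M_{k,i}}}.
\]
Expanding \(\tfrac12\norm{v_{k+1}-x^*}^2\), the variance term is \(\tfrac{a_{k+1}^2}{2}\mathcal M_k\sum_i(\nabla_if)^2/\sqrt{M_{k,i}}\). Using \(\mathcal M_k^2a_{k+1}^2=\theta A_{k+1}\le A_{k+1}\), this is at most \(A_{k+1}\) times the expected decrease from Step~1. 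This weighting is exactly why \(p\propto\sqrt{M}\) is used.

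The linear term \(a_{k+1}\inner{\nabla f(y_k)}{x^*-v_k}\) is handled as in the AGMsDR analysis. Exact segment minimization gives \(\inner{\nabla f(y_k)}{x_k-y_k}\le0\) and \(\inner{\nabla f(y_k)}{v_k-y_k}\le 0\) (the first-order conditions at an interior or endpoint \(\beta\)), and \(f(y_k)\le f(x_k)\). Combining these with convexity, \(a_{k+1}F(x^*)\ge a_{k+1}(f(y_k)+\inner{\nabla f(y_k)}{x^*-y_k})\) and \(A_k(f(y_k)-f^*)\le A_k\F(x_k)\), yields
\[
\E[\Phi_{k+1}\mid\mathcal F_k]\le\Phi_k,\qquad \Phi_k=A_k\F(x_k)+\tfrac12\norm{v_k-x^*}^2.
\]
Therefore \(\E[A_N\F(x_N)]\le\tfrac12\norm{z-x^*}^2\).

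\textbf{Step 3 (weight growth).} This is the main obstacle. \(A_N\) is random, and \(\mathcal M_k\) depends on \(\F(y_k)\), which is random. Since \(\F(y_k)\le\F(x_k)\), I will first try a pathwise monotonicity bound. Coordinate steps with admissible radius are descent steps, so \(f(x_{k+1})\le f(y_k)\le f(x_k)\le f(z)\) pathwise. Hence \(\F(y_k)\le\F(z_s)\), and by Cauchy--Schwarz over square roots,
\[
\mathcal M_k\le\sqrt{\tau\cstar}\bigl(S^{(0)}_{1/2}+S^{(1)}_{1/2}\sqrt{\F(z_s)}\bigr)=:\bar{\mathcal M}_s.
\]
The standard recursion \(\sqrt{A_{k+1}}-\sqrt{A_k}\ge\sqrt\theta/(2\mathcal M_k)\) then gives the deterministic bound \(A_N\ge\theta N^2/(4\bar{\mathcal M}_s^2)\), so
\[
\E[\F(z_{s+1})\mid z_s]\le 2\bar{\mathcal M}_s^2\widetilde R^2/(\theta N_s^2),
\]
using \(\norm{z_s-x^*}\le\widetilde R\) from pathwise monotonicity.

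The remaining difficulty is that \(\bar{\mathcal M}_s^2\) contains \(\F(z_s)\), which is only bounded in expectation. Because \(\bar{\mathcal M}_s^2\) is affine in \(\F(z_s)\) up to a cross term \(2S^{(0)}S^{(1)}\sqrt{\F}\), and \(\sqrt{\cdot}\) is concave, Jensen gives
\[
\E\bar{\mathcal M}_s^2\le\tau\cstar\bigl(S^{(0)}_{1/2}+S^{(1)}_{1/2}\sqrt{\Delta_s}\bigr)^2
\]
whenever \(\E\F(z_s)\le\Delta_s\). With \(N_s\ge2\widetilde R\sqrt{\tau\cstar/\theta}\,(S^{(0)}_{1/2}/\sqrt{\Delta_s}+S^{(1)}_{1/2})\), this gives \(\E\F(z_{s+1})\le\Delta_s/2\), and the induction closes.

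\textbf{Step 4 (summation).} Sum \(N_s\le1+2\widetilde R\sqrt{\tau\cstar/\theta}\,(S^{(0)}_{1/2}2^{s/2}/\sqrt{F_0}+S^{(1)}_{1/2})\) over \(s<S\). The geometric sum gives \(O(S^{(0)}_{1/2}/\sqrt\varepsilon)\), and \(S=O(\log(F_0/\varepsilon))\) multiplies the \(S^{(1)}_{1/2}\) term, which is the stated bound.
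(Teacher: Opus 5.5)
Your architecture matches the paper's proof. It has pathwise descent inside the coordinate radii, and the supermartingale $\Phi_k$ with $p_{k,i}\propto\sqrt{M_{k,i}}$, which keeps the variance term at most $\theta A_{k+1}$ times the expected decrease. It uses the pathwise bound $\mathcal M_k\le\sqrt{\tau\cstar}\bigl(S_{1/2}^{(0)}+S_{1/2}^{(1)}\sqrt{\F(z_s)}\bigr)$ and Jensen on the cross term. Two steps, however, need repair.

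\textbf{The segment optimality conditions in Step~2 have the wrong sign.} Exact minimization over $[v_k,x_k]$ gives $\inner{\nabla f(y_k)}{w-y_k}\ge0$ for every $w$ on the segment. Hence $\inner{\nabla f(y_k)}{v_k-y_k}\ge0$ and $\inner{\nabla f(y_k)}{x_k-y_k}\ge0$. Your versions with $\le0$ are false at endpoint minimizers: for $f(x)=x^2$, $v_k=2$, $x_k=1$, one has $y_k=1$ and $\inner{\nabla f(y_k)}{v_k-y_k}=2$.

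With $\le0$ the coupling cannot be closed. You must discard $a_{k+1}\inner{\nabla f(y_k)}{y_k-v_k}$ from the bound $a_{k+1}\F(y_k)\le a_{k+1}\inner{\nabla f(y_k)}{y_k-v_k}+a_{k+1}\inner{\nabla f(y_k)}{v_k-x^*}$, and that requires this term to be nonpositive. With the correct sign and $f(y_k)\le f(x_k)$, you get $A_{k+1}\F(y_k)-A_k\F(x_k)\le a_{k+1}\inner{\nabla f(y_k)}{v_k-x^*}$ immediately. This is slightly shorter than the paper's route, which uses convexity at $x_k$ together with the variational inequality at $w_k=(A_kx_k+a_{k+1}v_k)/A_{k+1}$ and reaches the same coupling inequality.

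\textbf{Step~4 drops the ceiling overhead.} Write $N_s=\lceil T_s\rceil$. You use $N_s\le T_s+1$ and then silently drop the accumulated $+S$. The claimed bound has no standalone $\log(F_0/\varepsilon)$ term, so you must show that $T_s$ is bounded below by an absolute constant.

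The paper does this by combining $F_0\le\widetilde R\norm{\nabla f(x^0)}$ with the per-coordinate estimate $(\nabla_i f(x^0))^2\le2\tau\cstar(H_{0,i}+H_{1,i}F_0)F_0$. That estimate comes from one admissible coordinate step at $x^0$ together with $f\ge f^*$. Summing over $i$ gives $\widetilde R\bigl(S_{1/2}^{(0)}/\sqrt{F_0}+S_{1/2}^{(1)}\bigr)\ge1/\sqrt{2\tau\cstar}$. Hence $T_s\ge T_0\ge\sqrt{2/\theta}\ge1$ and $N_s\le2T_s$.

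Two minor points: the bound on $\mathcal M_k$ uses $\sqrt{a+b}\le\sqrt a+\sqrt b$, not Cauchy--Schwarz. You also rely on finiteness of $\widetilde R$, which follows from boundedness of $X^*$ as in the paper's sublevel-set lemma.
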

\end{mainresultbox}

For fixed \(\theta\) and \(\tau\), Theorem~\ref{thm:nonuniform-coordinate-main}
gives \(N_{\rm tot}^{\rm nonunif}
=O(\widetilde R S_{1/2}^{(0)}/\sqrt\varepsilon
+\widetilde R S_{1/2}^{(1)}\log(F_0/\varepsilon))\), where
\(N_{\rm tot}^{\rm nonunif}\) counts importance-sampled coordinate-update
iterations across all restart phases.  Each iteration uses one sampled partial
derivative for the main updates and treats the exact segment solve in line~3
of Algorithm~\ref{alg:nonuniform-coordinate-inner} as one call.  With the
inexact line search discussed in Section~\ref{sec:discussion-limitations} and
logarithmic solver cost, the same bound holds for the total number of oracle
calls, including line-search calls, with \(O\) replaced by \(\widetilde O\).
In the uniform case,
\(S_{1/2}^{(0)}=d\sqrt{H_0}\) and
\(S_{1/2}^{(1)}=d\sqrt{H_1}\), recovering
Theorem~\ref{thm:coordinate-main}; heterogeneous coordinate curvatures can make
both sums substantially smaller.  When \(H_{1,i}=0\) for all \(i\), the first
term recovers the standard non-uniform accelerated coordinate dependence on
\(\sum_i\sqrt{H_{0,i}}\) \citep{Nesterov_2017,allenzhu2016evenfaster}.  See
Appendix~\ref{app:nonuniform-coordinate-proof} for the proof.

To the best of our knowledge, Theorem~\ref{thm:nonuniform-coordinate-main} is
the first accelerated non-uniform coordinate guarantee under coordinate-wise
\((H_0,H_1)\)-smoothness.  In contrast, the coordinate
\((L_0,L_1)\)-smooth analysis of \citet{lobanov2024linear} uses uniform sampling
and gives a non-accelerated rate.

\vspace{-0.4em}
\paragraph{Proof sketch.}
The proof follows Theorem~\ref{thm:coordinate-main}.  Conditioning before
sampling \(i_k\), the choices \(p_{k,i}=\sqrt{M_{k,i}}/\mathcal{M}_k\) and
\(\mathcal{M}_k^2a_{k+1}^2=\theta A_{k+1}\) preserve the same potential decrease.  The
uniform quantity \(d\sqrt{M_k}\) is replaced by \(\mathcal{M}_k\), where
\(\mathcal{M}_k\le\sqrt{\tau\cstar}
(S_{1/2}^{(0)}+S_{1/2}^{(1)}\sqrt{\F(y_k)})\).  The same monotonicity and
restart arguments then yield the stated complexity.

\begin{remarkbox}
\begin{remark}[Phase-wise sampling]
Although Algorithm~\ref{alg:nonuniform-coordinate-inner} updates the sampling
probabilities at every iteration, convergence only requires them to be fixed
before sampling \(i_k\).  To avoid an \(O(d)\) probability update per
iteration, one may set
\(M_{s,i}:=\tau\cstar(H_{0,i}+H_{1,i}\F(z_s))\) and
\(p_{s,i}:=\sqrt{M_{s,i}}/\sum_j\sqrt{M_{s,j}}\) once at the beginning of
phase \(s\).  Since \(\F(y_k)\le\F(z_s)\) throughout the phase, the same
radius and descent conditions hold, and the complexity remains unchanged.
\end{remark}
\end{remarkbox}

\section{Numerical experiments}
\label{sec:experiments}

In this section, we verify our theoretical results through numerical
experiments and report the relative gap \(\F(x)/F_0\).
\vspace{-1.2em}
\paragraph{Full-gradient algorithms.}
For \(d=32\), we consider \(\min_{x\in\mathbb R^d}f_{H_1}(x)\), where
\(f_{H_1}(x):=\frac{\mu_q}{8}x^\top Qx+\psi_{H_1}(u^\top x)\),
\(Q:=\operatorname{tridiag}(-1,2,-1)\), \(\norm{u}=1\), and
\(\mu_q=\mu_\psi=1/2\).  Inside a prescribed cutoff, we set
\(\psi_{H_1}(t):=\frac{\mu_\psi}{H_1}
(\cosh(\sqrt{H_1}t)-1)\) and use its convex \(C^2\) quadratic continuation
outside.  Because \(\norm{Q}<4\),
\(\psi_{H_1}''=\mu_\psi+H_1\psi_{H_1}\), and the continuation preserves this
bound, \(\norm{\nabla^2 f_{H_1}(x)}\le1+H_1\F(x)\); thus \(H_0=1\) and
\(x^*=0\), \(f^*=0\).  We compare \emph{Ours (restarted AGD)}
(Algorithms~\ref{alg:restart-meta}--\ref{alg:full-gradient-inner}) with
\emph{GD-Warmup}, the non-accelerated \((H_0,H_1)\)-smooth method of
\citet{lobanov2026clipped}; \emph{Frozen-FGM}, the standard two-sequence
Nesterov method \citep{nesterov1983method}, run without restarts or segment
relaxation and with the fixed step size \(1/M_{\rm frozen}\), where
\(M_{\rm frozen}:=H_0+H_1F_0\); and \emph{AGMsDR}, Algorithm~1 of
\citet{vankov2024l0l1}.  We choose the continuation so that
\(L_{\rm global}:=\sup_x\norm{\nabla^2 f_{H_1}(x)}\le M_{\rm frozen}\) and run
AGMsDR with \((L_0,L_1)=(L_{\rm global},0)\).  All methods share \(x^0\); we
report the first iteration attaining the prescribed relative gap.
Figure~\ref{fig:full-gradient-scaling}(a) confirms the predicted acceleration:
as the gap decreases from \(10^{-2}\) to \(10^{-8}\) at \(H_1=4\), our count
grows from \(15\) to \(105\), versus \(44\) to \(2168\) for GD-Warmup.  In
panel (b), our gap-adaptive
curvature is less sensitive to \(H_1\): at \(H_1=64\), it requires \(126\)
iterations versus \(1599\) for Frozen-FGM.  Ours also improves on AGMsDR on
this family (\(105\) vs. \(136\) at gap \(10^{-8}\), and \(126\) vs. \(429\) at
\(H_1=64\)), but this does not imply uniform dominance: AGMsDR targets the
narrower \((L_0,L_1)\)-smooth class, and Appendix~\ref{app:additional-full-gradient-experiments}
gives an instance where it needs fewer outer iterations.

\begin{figure}[H]
  \centering
  \includegraphics[width=0.80\linewidth]{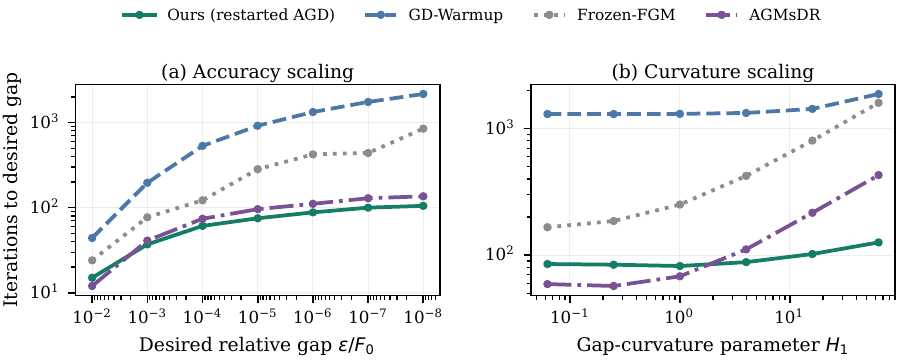}
  \caption{\small Iterations required on the chain-cosh objective:
  (a) desired gap for \(H_1=4\); (b) \(H_1\) for
  \(\F(x)/F_0\le10^{-6}\).}
  \label{fig:full-gradient-scaling}
\end{figure}

\paragraph{Coordinate algorithms.}
We use a \(24\)-dimensional separable capped-cosh objective with four
high-curvature coordinates; the remaining coordinates have both
\(H_{0,i}\) and \(H_{1,i}\) reduced by a factor \(100\).  Over 20 seeds and at
\(\F(x)/F_0\le10^{-5}\), non-uniform sampling reduces the median
coordinate-gradient count from \(8170\) to \(1598\) and the total first-order
count from \(18038\) to \(4505\), supporting the dependence on
\(S_{1/2}^{(j)}=\sum_i\sqrt{H_{j,i}}\) in
Theorem~\ref{thm:nonuniform-coordinate-main}.  Due to space constraints,
Figure~\ref{fig:coordinate-sampling}, which compares our uniform and
non-uniform methods, is deferred to Appendix~\ref{app:coordinate-experiment}.

\section{Discussion and Limitations}
\label{sec:discussion-limitations}

Table~\ref{tab:table_compare} is representative rather than exhaustive and
focuses on rates stated directly in the native \((H_0,H_1)\) parametrization
and on their full-gradient or coordinate counterparts.  We discuss
\citet{tyurin2025near}\footnote{The main text focuses on comparisons with
\((L_0,L_1)\)-smooth results; general gradient-dependent \(\ell\)-smoothness
is discussed in Appendix~\ref{app:ell-smoothness-comparison}.}
and Accelerated GRAAL
\citep{borodich2026nesterov} here rather than adding them to the table:
both are important standard first-order full-gradient results for
gradient-dependent generalized smoothness, including the
\((L_0,L_1)\)-smooth class, but their assumptions, native parameters, and
oracle models are not directly comparable with our strict
\((H_0,H_1)\setminus(L_0,L_1)\) examples, and neither work develops the
coordinate extensions considered here.  The closest predecessor is AGMsDR
\citep{vankov2024l0l1}, since it uses the same small-dimensional segment
relaxation and a closely related accelerated coupling.  This is why our main
comparison emphasizes Vankov et al.; it does not imply uniform dominance over
their method or over the more recent standard first-order algorithms.

One-dimensional line searches are common in first-order optimization,
including full-gradient methods; for example, steepest descent may choose its
step size by searching along the negative-gradient direction
\citep{nocedal2006numerical,bottou2018optimization}.  The segment
relaxation in line~3 of Algorithms~\ref{alg:full-gradient-inner}--\ref{alg:nonuniform-coordinate-inner}
is related but distinct: it selects a
coupling point on \([v_k,x_k]\) before the gradient or coordinate update,
rather than its step size \citep{nesterov2021primaldual,vankov2024l0l1}.  The
main analysis assumes exact segment minimization.
Appendix~\ref{app:inexact-relaxation} replaces it by a
verifiable first-order residual, preserving all three iteration bounds up to
absolute constants while accounting for line-search queries separately; the
resulting cost--accuracy tradeoff is evaluated in
Appendix~\ref{app:inexact-relaxation-experiment}.

All proposed methods require the exact value \(f^*\), because the gap enters
the local curvature, the non-uniform sampling probabilities, and the restart
test.  The value is known in some nonnegative realizable or interpolating
models; for example, exactly realizable unregularized squared loss has
\(f^*=0\).  This observation does not extend to classification without
qualification: separable unregularized logistic risk has infimum zero, but
that infimum is generally not attained and therefore lies outside our
assumption \(X^*\ne\varnothing\).  With regularization, noise, or model
mismatch, \(f^*\) is generally unknown, and its removal remains a limitation.

The phase lengths also depend on \(\widetilde R\), which may be difficult to
estimate and much larger than \(\norm{x^0-x^*}\).
Appendix~\ref{app:adaptive-phase-lengths}
removes it from the inputs by verified doubling, changing the full-gradient
cost only by a constant and preserving the coordinate bounds in expectation.
Thus \(\widetilde R\) remains only in the complexity bounds, while valid
smoothness bounds are still required.

\section{Conclusion}
\label{sec:conclusion}

We developed a common acceleration mechanism for convex
\((H_0,H_1)\)-smooth optimization and instantiated it with full-gradient,
uniform-coordinate, and non-uniform-coordinate methods.  The resulting
iteration bounds separate the \(H_0\) and \(H_1\) contributions, match the
classical smoothness and accuracy dependence when \(H_1=0\), and exploit
coordinate heterogeneity through non-uniform sampling.  Acceleration requires
only local upper models within their validity radii, rather than a global
quadratic model.  We complement these guarantees with implementable inexact
segment relaxation, whose line-search calls are accounted for separately, and
verified doubling that removes prior knowledge of \(\widetilde R\) from the
algorithm inputs.  Experiments confirm acceleration, non-uniform sampling
gains, and the viability of inexact relaxation.  Deriving matching lower
bounds and developing stochastic variants remain open directions.


\bibliography{3_references}
\bibliographystyle{iclr2027_conference}

\onecolumn
\appendix
\setcounter{tocdepth}{2}

\begin{center}
  {\LARGE\bfseries APPENDIX\\[0.4em]
  A Few Accelerated Algorithms for Convex Optimization under
  \((H_0,H_1)\)-Smoothness\par}
  \vspace{0.8em}
\end{center}

\tableofcontents

\section{Proofs for Examples~\ref{ex:one-dimensional-separation} and
\ref{ex:multidimensional-separation}}
\label{app:separation-examples}

\begin{mainresultbox}
\noindent\textbf{Example~\ref{ex:one-dimensional-separation}
(One-dimensional).}\enspace
{\itshape Fix \(p\in(1,2]\).  Let
\(q_p(t):=1+(1+t^2)^{p/2}\exp(-(1+t^2)^p\sin^2(\pi t))\), and define
\[
  \phi_p(x)
  =
  \int_0^x\int_0^s q_p(t)\,dt\,ds .
\]
Then \(\phi_p\) is convex and satisfies Assumption~\ref{ass:h0h1-smoothness}
with \(H_0=H_1=2\), but is not \((L_0,L_1)\)-smooth for any finite
\(L_0,L_1\).\par}
\end{mainresultbox}

\begin{proof}
The function \(q_p\) is infinitely differentiable because it is a composition
of infinitely differentiable functions and \(1+t^2\) is strictly positive.
The fundamental theorem of calculus therefore gives
\(\phi_p'(x)=\int_0^x q_p(t)\,dt\) and \(\phi_p''(x)=q_p(x)\).  In particular,
\(\phi_p(0)=\phi_p'(0)=0\).  Since the exponential term in \(q_p\) is
nonnegative, \(q_p(x)\ge1\) for every \(x\).  Taylor's formula with integral
remainder consequently yields
  \begin{equation}
  \label{eq:appendix-phi-lower}
  \phi_p(x)
  =
  x^2\int_0^1(1-s)\phi_p''(sx)\,ds
  =
  x^2\int_0^1(1-s)q_p(sx)\,ds
  \ge
  x^2\int_0^1(1-s)\,ds
  =
  \frac{x^2}{2}.
  \end{equation}
The first equality uses \(\phi_p(0)=\phi_p'(0)=0\), the second uses
\(\phi_p''=q_p\), and the inequality uses \(q_p\ge1\).  Hence \(\phi_p\) is
1-strongly convex, and its unique minimizer is \(x^*=0\), with
\(\phi_p^*=\phi_p(0)=0\).

For every \(x\in\mathbb R\),
\begin{equation}
\label{eq:appendix-one-dimensional-hessian}
  \phi_p''(x)
  =q_p(x)
  \le 1+(1+x^2)^{p/2}
  \le 2+x^2
  \le 2+2\phi_p(x).
\end{equation}
The first inequality follows from \(\exp(-u)\le1\) for \(u\ge0\).  The second
uses \(p/2\le1\) and \(1+x^2\ge1\), which imply
\((1+x^2)^{p/2}\le1+x^2\).  The last inequality follows from
\eqref{eq:appendix-phi-lower}.  Thus
\(\lvert\phi_p''(x)\rvert\le2+2(\phi_p(x)-\phi_p^*)\) for all \(x\).
Applying the local-model implication of
\citet[Lemma~2]{liu2025warmup} with \(\rho=1\) therefore shows that \(\phi_p\)
satisfies Assumption~\ref{ass:h0h1-smoothness} with \(H_0=H_1=2\).

Suppose, toward a contradiction, that \(\phi_p\) is
\((L_0,L_1)\)-smooth for some finite \(L_0,L_1\ge0\).  Since \(\phi_p\) is
twice continuously differentiable, the original Hessian formulation of
\((L_0,L_1)\)-smoothness introduced by
\citet{zhang2019gradientclipping} requires the pointwise condition
\begin{equation}
\label{eq:appendix-one-dimensional-l0l1-necessary}
  \lvert\phi_p''(x)\rvert
  \le
  L_0+L_1\lvert\phi_p'(x)\rvert,
  \qquad x\in\mathbb R.
\end{equation}
The same necessary condition follows from the first-order local-gradient
formulation of \citet{zhang2020improvedclipping}: divide its inequality by
\(\lvert y-x\rvert\) and let \(y\to x\), so that the difference quotient
converges to \(\lvert\phi_p''(x)\rvert\).
At every integer \(n\ge1\), \(\sin(\pi n)=0\), and hence
\begin{equation}
\label{eq:appendix-phi-hessian-integers}
  \phi_p''(n)
  =q_p(n)
  =1+(1+n^2)^{p/2}
  \ge n^p,
\end{equation}
where the inequality follows from \(1+n^2\ge n^2\).

We next bound \(\phi_p'(n)\).  Set
\(a(t):=(1+t^2)^{p/2}\),
\(I_m:=[m-\frac12,m+\frac12]\), and
\(A_m:=(1+m^2)^{p/2}\).  If \(m\ge1\) and \(t\in I_m\), then
\(m/2\le t\le3m/2\), so
\[
  \frac{1+m^2}{4}
  \le 1+t^2
  \le \frac94(1+m^2).
\]
Raising these inequalities to the positive power \(p/2\) and using
\(p\le2\) gives
\begin{equation}
\label{eq:appendix-am-bounds}
  \frac{A_m}{4}
  \le
  2^{-p}A_m
  \le
  a(t)
  \le
  \left(\frac32\right)^p A_m
  \le
  \frac94A_m.
\end{equation}
For \(u:=t-m\in[-\frac12,\frac12]\), concavity of \(\sin(\pi u)\) on
\([0,\frac12]\), together with symmetry, gives
\(\lvert\sin(\pi t)\rvert=\lvert\sin(\pi u)\rvert\ge2\lvert u\rvert\).
Combining
\(\lvert\sin(\pi t)\rvert\ge2\lvert u\rvert\) with the lower bound on
\(a(t)\) in \eqref{eq:appendix-am-bounds}, we obtain
\(a(t)^2\sin^2(\pi t)\ge A_m^2u^2/4\).  Therefore,
\begin{equation}
\label{eq:appendix-interval-integral}
\begin{aligned}
  &\int_{I_m}a(t)\exp\!\left(-a(t)^2\sin^2(\pi t)\right)dt \\
  &\quad\le
  \frac94A_m\int_{-1/2}^{1/2}
  \exp\!\left(-\frac{A_m^2u^2}{4}\right)du
  \le
  \frac94A_m\int_{-\infty}^{\infty}
  \exp\!\left(-\frac{A_m^2u^2}{4}\right)du
  =
  \frac92\sqrt{\pi}.
\end{aligned}
\end{equation}
The first inequality uses the upper bound \(a(t)\le9A_m/4\) and the lower
bound on the exponent, the second enlarges the integration domain, and the
last equality is the Gaussian integral after the substitution
\(v=A_mu/2\).  For \(m=0\), \(t\in I_0\) implies
\(a(t)\le(5/4)^{p/2}\le5/4\), so the same integral over \(I_0\) is at most
\(5/4<9\sqrt\pi/2\).  The intervals \(I_0,\ldots,I_n\) cover \([0,n]\) up
to endpoints.  Since the integrand is nonnegative,
\eqref{eq:appendix-interval-integral} and the \(m=0\) bound give, for every
integer \(n\ge1\),
\begin{equation}
\label{eq:appendix-phi-gradient-integers}
\begin{aligned}
  \phi_p'(n)
  &=\int_0^n q_p(t)\,dt \\
  &=n+\int_0^n a(t)
    \exp\!\left(-a(t)^2\sin^2(\pi t)\right)dt \\
  &\le n+\sum_{m=0}^n\int_{I_m}a(t)
    \exp\!\left(-a(t)^2\sin^2(\pi t)\right)dt \\
  &\le n+\frac92\sqrt\pi(n+1)
  \le (1+9\sqrt\pi)n.
\end{aligned}
\end{equation}
The last inequality uses \(n+1\le2n\), valid for \(n\ge1\).  Substituting
\eqref{eq:appendix-phi-hessian-integers} and
\eqref{eq:appendix-phi-gradient-integers} into the necessary
\((L_0,L_1)\)-smoothness condition
\eqref{eq:appendix-one-dimensional-l0l1-necessary} gives
\[
  n^p
  \le
  L_0+L_1(1+9\sqrt\pi)n.
\]
After division by \(n\), the left-hand side is \(n^{p-1}\to\infty\) because
\(p>1\), whereas the right-hand side is at most
\(L_0+L_1(1+9\sqrt\pi)\).  This contradiction proves that no finite pair
\((L_0,L_1)\) can satisfy the generalized smoothness condition for \(\phi_p\).
\end{proof}

\begin{mainresultbox}
\noindent\textbf{Example~\ref{ex:multidimensional-separation}
(Multidimensional).}\enspace
{\itshape For \(d\ge2\), let
\[
  \Phi_p(x_1,\ldots,x_d)
  =
  \phi_p(x_1)+\frac12\sum_{i=2}^d x_i^2,
\]
where \(\phi_p\) is the function from
Example~\ref{ex:one-dimensional-separation}.  Then \(\Phi_p\) is strongly
convex and satisfies Assumption~\ref{ass:h0h1-smoothness} with \(H_0=H_1=2\),
but is not \((L_0,L_1)\)-smooth for any finite \(L_0,L_1\).\par}
\end{mainresultbox}

\begin{proof}
Differentiating the definition of \(\Phi_p\) gives
\[
  \nabla\Phi_p(x)
  =
  \bigl(\phi_p'(x_1),x_2,\ldots,x_d\bigr)
\]
and
\[
  \nabla^2\Phi_p(x)
  =
  \operatorname{diag}\bigl(\phi_p''(x_1),1,\ldots,1\bigr).
\]
The proof of Example~\ref{ex:one-dimensional-separation} shows that
\(\phi_p''(x_1)\ge1\).  Hence \(\nabla^2\Phi_p(x)\succeq I_d\), so
\(\Phi_p\) is 1-strongly convex.  Moreover,
\(\nabla\Phi_p(\mathbf 0)=\mathbf 0\) and \(\Phi_p(\mathbf 0)=0\); therefore
\(\mathbf 0\in\mathbb R^d\) is the unique minimizer and \(\Phi_p^*=0\).

Since \(\phi_p''(x_1)\ge1\), the largest diagonal entry of the Hessian is
\(\phi_p''(x_1)\).  Consequently,
\[
  \norm{\nabla^2\Phi_p(x)}
  =
  \phi_p''(x_1)
  \le
  2+2\phi_p(x_1)
  \le
  2+2\Phi_p(x).
\]
Here the first inequality follows from
\eqref{eq:appendix-one-dimensional-hessian}, and the second follows from
\(\Phi_p(x)=\phi_p(x_1)+\frac12\sum_{i=2}^d x_i^2\ge\phi_p(x_1)\).
Thus
\(\norm{\nabla^2\Phi_p(x)}\le2+2(\Phi_p(x)-\Phi_p^*)\), and the
local-model implication of \citet[Lemma~2]{liu2025warmup} with \(\rho=1\)
shows that \(\Phi_p\) satisfies Assumption~\ref{ass:h0h1-smoothness} with
\(H_0=H_1=2\).

Suppose, toward a contradiction, that \(\Phi_p\) is
\((L_0,L_1)\)-smooth for some finite \(L_0,L_1\ge0\).  Because \(\Phi_p\) is
twice continuously differentiable, the Hessian formulation of
\citet{zhang2019gradientclipping} requires
\begin{equation}
\label{eq:appendix-multidimensional-l0l1-necessary}
  \norm{\nabla^2\Phi_p(x)}
  \le
  L_0+L_1\norm{\nabla\Phi_p(x)},
  \qquad x\in\mathbb R^d.
\end{equation}
Equivalently, \eqref{eq:appendix-multidimensional-l0l1-necessary} follows
from the first-order formulation of
\citet{zhang2020improvedclipping} by taking \(y=x+tv\) for an arbitrary unit
vector \(v\), dividing by \(\lvert t\rvert\), letting \(t\to0\), and then
taking the supremum over all such \(v\).
For every integer \(n\ge1\), set \(x^{(n)}:=(n,0,\ldots,0)\).  The estimates
in \eqref{eq:appendix-phi-hessian-integers} and
\eqref{eq:appendix-phi-gradient-integers} give
\[
  \norm{\nabla^2\Phi_p(x^{(n)})}
  =\phi_p''(n)
  \ge n^p
\]
and
\[
  \norm{\nabla\Phi_p(x^{(n)})}
  =\lvert\phi_p'(n)\rvert
  \le(1+9\sqrt\pi)n.
\]
Substitution into
\eqref{eq:appendix-multidimensional-l0l1-necessary} yields
\[
  n^p
  \le
  L_0+L_1(1+9\sqrt\pi)n.
\]
Dividing by \(n\) gives
\(n^{p-1}\le L_0/n+L_1(1+9\sqrt\pi)\).  The right-hand side remains bounded
as \(n\to\infty\), whereas the left-hand side diverges because \(p>1\).
This contradiction proves that \(\Phi_p\) is not \((L_0,L_1)\)-smooth for
any finite \(L_0,L_1\).
\end{proof}

\section{Proof of Theorem~\ref{thm:full-gradient-main}}
\label{app:full-gradient-proof}
\label{app:local-model}
\label{app:safe-descent}
\label{app:unified-acceleration}
\label{app:restart-analysis}
\label{app:full-gradient-completion}

Before proving Theorem~\ref{thm:full-gradient-main}, we establish an auxiliary
curvature estimate that will also be used in the uniform- and
non-uniform-coordinate analyses.  It shows that enforcing the local-model
radius changes the corresponding curvature only by an absolute factor.

\begin{lemma}[Curvature bound for the local radius]
\label{lem:radius-curvature-bound}
Let \(\cstar:=19+12\sqrt2\).  Under
Assumption~\ref{ass:h0h1-smoothness}, for every \(x\in\mathbb R^d\),
\[
  \max\left\{
    2\bigl(H_0+H_1\F(x)\bigr),
    \frac{\norm{\nabla f(x)}}{r_1}
  \right\}
  \le
  \cstar\bigl(H_0+H_1\F(x)\bigr).
\]
Under Assumption~\ref{ass:coordinate-h0h1-smoothness}, for every
\(x\in\mathbb R^d\) and \(i\in\{1,\ldots,d\}\),
\[
  \max\left\{
    2\bigl(H_{0,i}+H_{1,i}\F(x)\bigr),
    \frac{\lvert\nabla_i f(x)\rvert}{r_{1,i}}
  \right\}
  \le
  \cstar\bigl(H_{0,i}+H_{1,i}\F(x)\bigr).
\]
When \(H_1=0\) or \(H_{1,i}=0\), respectively, the corresponding ratio with
the infinite radius is interpreted as zero.
\end{lemma}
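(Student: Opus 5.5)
The plan is to prove the gradient half of the bound from a single application of the local upper model at the boundary of its validity ball, comparing against \(f^*\).

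The first entry of the maximum needs no work: \(2(H_0+H_1\F(x))\le\cstar(H_0+H_1\F(x))\) because \(\cstar=19+12\sqrt2\ge2\). When \(H_1=0\) (or \(H_{1,i}=0\)), the ratio with the infinite radius is interpreted as zero, so only this first entry remains. Hence it suffices to show \(\norm{\nabla f(x)}/r_1\le\cstar(H_0+H_1\F(x))\) when \(H_1>0\), and the analogous coordinate inequality.

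For the full-gradient case, fix \(x\) with \(g:=\nabla f(x)\neq0\); the case \(g=0\) is trivial. Take the test point \(y:=x-r_1\,g/\norm{g}\), so \(\norm{y-x}=r_1\) and Assumption~\ref{ass:h0h1-smoothness} applies. Using \(f^*\le f(y)\), we get
\[
  f^*\le f(x)-r_1\norm{g}+\frac{\Ls(\F(x))}{2}r_1^2
  = f(x)-r_1\norm{g}+(H_0+H_1\F(x))r_1^2 .
\]
Rearranging and dividing by \(r_1^2\) gives
\[
  \frac{\norm{g}}{r_1}\le\frac{\F(x)}{r_1^2}+H_0+H_1\F(x).
\]
It remains to compute the constant. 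Since \((2\sqrt3+\sqrt6)^2=12+6+4\sqrt{18}=18+12\sqrt2\), we have \(1/r_1^2=(18+12\sqrt2)H_1\). Therefore \(\F(x)/r_1^2=(18+12\sqrt2)H_1\F(x)\le(18+12\sqrt2)(H_0+H_1\F(x))\), using \(H_0\ge0\). Adding the remaining \(H_0+H_1\F(x)\) yields the factor \(19+12\sqrt2=\cstar\).

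The coordinate statement follows by the identical computation under Assumption~\ref{ass:coordinate-h0h1-smoothness}. Take \(h:=-r_{1,i}\operatorname{sign}(\nabla_i f(x))\), so that \(h\nabla_i f(x)=-r_{1,i}\lvert\nabla_i f(x)\rvert\), and lower-bound \(f(x+he_i)\) by \(f^*\). This gives \(\lvert\nabla_i f(x)\rvert/r_{1,i}\le\F(x)/r_{1,i}^2+H_{0,i}+H_{1,i}\F(x)\). The constant arithmetic is the same with \(H_{1,i}\) in place of \(H_1\).

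There is no real obstacle in this argument. The only points needing care are choosing the step exactly at the radius \(r_1\) rather than optimizing over \(h\le r_1\), since that choice makes the constant come out to \(\cstar\) exactly, and verifying that the radius and constant in the definition of \(r_1\) combine as claimed.
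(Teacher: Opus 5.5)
Your proof is correct and is essentially the paper's argument: apply the local model at the boundary test point $x-r_1\nabla f(x)/\norm{\nabla f(x)}$ (respectively $x-r_{1,i}\operatorname{sign}(\nabla_i f(x))e_i$), lower-bound the left side by $f^*$, and use $r_1^{-2}=(18+12\sqrt2)H_1$ to obtain the constant $\cstar$. The only difference is cosmetic: you absorb $(18+12\sqrt2)H_1\F(x)$ into $(18+12\sqrt2)(H_0+H_1\F(x))$ before adding the remaining term, while the paper first writes $H_0+(19+12\sqrt2)H_1\F(x)$. Both yield the same bound.
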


\begin{proof}
We first prove the full-gradient statement.  The case \(H_1=0\) is immediate
because \(r_1=\infty\) and \(\cstar>2\).  Suppose \(H_1>0\).  If
\(\nabla f(x)=0\), the ratio involving the gradient is zero.  Otherwise, set
\[
  \bar x
  :=
  x-r_1\frac{\nabla f(x)}{\norm{\nabla f(x)}}.
\]
Since \(\norm{\bar x-x}=r_1\), Assumption~\ref{ass:h0h1-smoothness} applies to
\(\bar x\) with base point \(x\).  Using
\(\inner{\nabla f(x)}{\bar x-x}=-r_1\norm{\nabla f(x)}\) and
\(f^*\le f(\bar x)\), we obtain
\[
  f^*
  \le
  f(x)-r_1\norm{\nabla f(x)}
  +\bigl(H_0+H_1\F(x)\bigr)r_1^2.
\]
Subtracting \(f^*\), rearranging, and dividing by \(r_1^2\) gives
\[
\begin{aligned}
  \frac{\norm{\nabla f(x)}}{r_1}
  &\le
  \frac{\F(x)}{r_1^2}+H_0+H_1\F(x) \\
  &=
  H_0+(19+12\sqrt2)H_1\F(x) \\
  &\le
  \cstar\bigl(H_0+H_1\F(x)\bigr),
\end{aligned}
\]
where we used
\(r_1^{-2}=(2\sqrt3+\sqrt6)^2H_1=(18+12\sqrt2)H_1\).  Thus the
coefficient \(19+12\sqrt2\) is obtained by adding the remaining
\(H_1\F(x)\) term to
\((18+12\sqrt2)H_1\F(x)\).
The other term in the maximum is bounded by the same expression because
\(\cstar>2\), proving the first claim.

For the coordinate statement, the case \(H_{1,i}=0\) is again immediate.
Suppose \(H_{1,i}>0\) and \(\nabla_i f(x)\ne0\), and set
\(h:=-r_{1,i}\operatorname{sign}(\nabla_i f(x))\).  Then
\(\lvert h\rvert=r_{1,i}\), so
Assumption~\ref{ass:coordinate-h0h1-smoothness} and
\(f^*\le f(x+he_i)\) imply
\[
  f^*
  \le
  f(x)-r_{1,i}\lvert\nabla_i f(x)\rvert
  +\bigl(H_{0,i}+H_{1,i}\F(x)\bigr)r_{1,i}^2.
\]
Therefore,
\[
\begin{aligned}
  \frac{\lvert\nabla_i f(x)\rvert}{r_{1,i}}
  &\le
  \frac{\F(x)}{r_{1,i}^2}+H_{0,i}+H_{1,i}\F(x) \\
  &=
  H_{0,i}+(19+12\sqrt2)H_{1,i}\F(x) \\
  &\le
  \cstar\bigl(H_{0,i}+H_{1,i}\F(x)\bigr).
\end{aligned}
\]
Here
\(r_{1,i}^{-2}=(18+12\sqrt2)H_{1,i}\), and the additional
\(H_{1,i}\F(x)\) term again increases the coefficient from
\(18+12\sqrt2\) to \(19+12\sqrt2\).  The inequality is immediate when
\(\nabla_i f(x)=0\), and the first term in the maximum is controlled because
\(\cstar>2\).  This proves the coordinate statement.
\end{proof}

With this auxiliary estimate in place, we now restate the full-gradient result
and provide its complete proof.

\begin{mainresultbox}
\noindent\textbf{Theorem~\ref{thm:full-gradient-main}
(Full-gradient complexity).}\enspace
{\itshape Suppose Assumptions~\ref{ass:convexity} and
\ref{ass:h0h1-smoothness} hold, and let \(\varepsilon\in(0,F_0]\).  Define the
phase-length rule
\(\mathcal N(\Delta):=\left\lceil
2\widetilde R\sqrt{\frac{\tau\cstar}{\theta}}
\sqrt{\frac{H_0}{\Delta}+H_1}\right\rceil\).
Then Algorithm~\ref{alg:restart-meta}, with
Algorithm~\ref{alg:full-gradient-inner} as its inner method, returns a point
\(z_S\) satisfying
\(\F(z_S)\le\varepsilon\).  Moreover, the total number of iterations satisfies
\[
  N_{\rm tot}
  =
  O\left(
    \widetilde R\sqrt{\frac{\tau\cstar H_0}{\theta\varepsilon}}
    +
    \widetilde R\sqrt{\frac{\tau\cstar H_1}{\theta}}
    \log\frac{F_0}{\varepsilon}
  \right).
\]
\par}
\end{mainresultbox}

\begin{proof}
The proof has five main steps.  First, the curvature choice in
Algorithm~\ref{alg:full-gradient-inner} ensures
\(M_k\ge\norm{\nabla f(y_k)}/r_1\), where the local-model radius from
Assumption~\ref{ass:h0h1-smoothness} is
\(r_1=((2\sqrt3+\sqrt6)\sqrt{H_1})^{-1}\) for \(H_1>0\) and
\(r_1=\infty\) for \(H_1=0\).  Hence
\(\norm{x_{k+1}-y_k}\le r_1\), and the local model gives
\eqref{eq:appendix-full-descent};
Lemma~\ref{lem:radius-curvature-bound} shows that enforcing this radius changes
the curvature only by the absolute factor \(\cstar\), as summarized in
\eqref{eq:appendix-full-curvature-control}.  Second, exact segment minimization
and convexity give the coupling inequality
\eqref{eq:appendix-coupling}, which leads to the potential bound
\eqref{eq:appendix-full-potential}.  Third, a lower bound on the accelerated
weights converts this potential bound into the one-run estimate
\eqref{eq:appendix-main-estimate}.  Fourth, the phase-length rule in
Theorem~\ref{thm:full-gradient-main} turns that estimate into the gap
contraction \eqref{eq:appendix-full-contraction}.  Finally, induction over the
restart phases and summation of their lengths give the desired accuracy and
total complexity.

\paragraph{Local descent and curvature control.}
We use the first-order local model in
Assumption~\ref{ass:h0h1-smoothness} directly.  Recall that
\(r_1:=1/((2\sqrt3+\sqrt6)\sqrt{H_1})\) for \(H_1>0\), while
\(r_1:=\infty\) in the limiting case \(H_1=0\).  Whenever
\(\norm{y-x}\le r_1\),
\[
  f(y)
  \le
  f(x)+\inner{\nabla f(x)}{y-x}
  +\bigl(H_0+H_1\F(x)\bigr)\norm{y-x}^2 .
\]
The radius condition is essential.  In the accelerated gradient step
\[
  x_{k+1}=y_k-\frac{1}{M_k}\nabla f(y_k),
\]
it becomes
\[
  \norm{x_{k+1}-y_k}
  =
  \frac{\norm{\nabla f(y_k)}}{M_k}
  \le r_1 .
\]
This is not implied by choosing \(M_k=2(H_0+H_1\F(y_k))\), and is the reason for
the second term in the definition of \(\widehat M_k\) in
Algorithm~\ref{alg:full-gradient-inner}.

At iteration \(k\), Algorithm~\ref{alg:full-gradient-inner} chooses
\[
  M_k
  \ge
  \widehat M_k
  :=
  \max\left\{
    2\bigl(H_0+H_1\F(y_k)\bigr),
    \frac{\norm{\nabla f(y_k)}}{r_1}
  \right\}.
\]
Then
\[
  \norm{x_{k+1}-y_k}
  =
  \left\|-\frac{1}{M_k}\nabla f(y_k)\right\|
  =
  \frac{\norm{\nabla f(y_k)}}{M_k}
  \le r_1.
\]
Indeed, the first equality follows by subtracting \(y_k\) from the update
defining \(x_{k+1}\), and the second follows from the absolute homogeneity of
the norm.  Moreover,
\(M_k\ge\widehat M_k\ge\norm{\nabla f(y_k)}/r_1\), so multiplying the latter
inequality by \(r_1/M_k\) for \(H_1>0\) gives
\(\norm{\nabla f(y_k)}/M_k\le r_1\).  When \(H_1=0\), we have
\(r_1=\infty\), and the radius condition is vacuous.
Thus the local model applies with \(x=y_k\) and \(y=x_{k+1}\).  Therefore
\[
  f(x_{k+1})
  \le
  f(y_k)
  +
  \inner{\nabla f(y_k)}{x_{k+1}-y_k}
  +
  \bigl(H_0+H_1\F(y_k)\bigr)\norm{x_{k+1}-y_k}^2 .
\]
Since \(x_{k+1}-y_k=-M_k^{-1}\nabla f(y_k)\),
\begin{equation}
\label{eq:appendix-full-local-model-substitution}
  f(x_{k+1})
  \le
  f(y_k)
  -
  \frac{1}{M_k}\norm{\nabla f(y_k)}^2
  +
  \frac{H_0+H_1\F(y_k)}{M_k^2}
  \norm{\nabla f(y_k)}^2 .
\end{equation}
Because \(M_k\ge2(H_0+H_1\F(y_k))\), we have
\((H_0+H_1\F(y_k))/M_k^2\le1/(2M_k)\).  Substituting this inequality into
\eqref{eq:appendix-full-local-model-substitution} gives
\begin{equation}
\label{eq:appendix-full-descent}
  f(x_{k+1})
  \le
  f(y_k)-\frac{1}{2M_k}\norm{\nabla f(y_k)}^2 .
\end{equation}

Applying Lemma~\ref{lem:radius-curvature-bound} at \(y_k\) gives
\[
  \widehat M_k
  \le
  \cstar(H_0+H_1\F(y_k)).
\]
Since \(\tau\ge1\), the upper endpoint
\(\tau\cstar(H_0+H_1\F(y_k))\) is no smaller than \(\widehat M_k\).
Thus the interval used to choose \(M_k\) in
Algorithm~\ref{alg:full-gradient-inner} is nonempty, and every accepted
curvature satisfies
\begin{equation}
\label{eq:appendix-full-curvature-control}
  \widehat M_k
  \le
  M_k
  \le
  \tau\cstar\bigl(H_0+H_1\F(y_k)\bigr).
\end{equation}

Finally, since \(\beta=1\) is feasible in the inner segment-relaxation problem,
\(f(y_k)=\min_{\beta\in[0,1]}f(v_k+\beta(x_k-v_k))\le f(x_k)\), while
Equation~\eqref{eq:appendix-full-descent} gives
\begin{equation}
\label{eq:appendix-full-monotonicity}
  f(x_{k+1})\le f(y_k)\le f(x_k).
\end{equation}
Since \(x_0=z\), induction on \(k\) gives
\(f(x_k)\le f(z)\) and \(f(y_k)\le f(z)\) for every iteration of the inner
run.  Thus the primal iterates and interpolation points remain in the sublevel
set determined by the phase start \(z\); no analogous claim is needed for the
auxiliary sequence \(\{v_k\}\).

\paragraph{Accelerated coupling and potential.}
Fix any \(x^*\in X^*\), and consider one inner run initialized at
\(x_0=v_0=z\) and \(A_0=0\).  At iteration \(k\), let
\[
  y_k\in\argmin_{\beta\in[0,1]}
  f\bigl(v_k+\beta(x_k-v_k)\bigr).
\]
For any \(a>0\), define \(A^+:=A_k+a\) and
\[
  w_k(a):=\frac{A_kx_k+av_k}{A^+}.
\]
The coefficients in \(w_k(a)\) are nonnegative and sum to one, so
\(w_k(a)\) lies on the segment between \(x_k\) and \(v_k\).  The first-order
optimality condition for the convex minimization defining \(y_k\) therefore
gives
\[
  \inner{\nabla f(y_k)}{w_k(a)-y_k}\ge0.
\]
By convexity,
\[
  f(y_k)-f(x_k)\le \inner{\nabla f(y_k)}{y_k-x_k},
  \qquad
  \F(y_k)\le \inner{\nabla f(y_k)}{y_k-x^*}.
\]
The first inequality is obtained by applying convexity at \(y_k\) with the
comparison point \(x_k\); the second is obtained with the comparison point
\(x^*\), using \(f(x^*)=f^*\).  Since \(A^+=A_k+a\), these inequalities give
\[
\begin{aligned}
  A^+\F(y_k)-A_k\F(x_k)
  &=A_k\bigl(f(y_k)-f(x_k)\bigr)+a\F(y_k) \\
  &\le
  A_k\inner{\nabla f(y_k)}{y_k-x_k}
  +a\inner{\nabla f(y_k)}{y_k-x^*}.
\end{aligned}
\]
The vector identity
\[
  A_k(y_k-x_k)+a(y_k-x^*)
  =
  a(v_k-x^*)+A^+(y_k-w_k(a))
\]
follows by substituting the definition of \(w_k(a)\) and collecting terms.
Taking its inner product with \(\nabla f(y_k)\), and using
\(\inner{\nabla f(y_k)}{y_k-w_k(a)}\le0\), gives
\[
  A^+\F(y_k)-A_k\F(x_k)
  \le
  a\inner{\nabla f(y_k)}{v_k-x^*}.
\]
Taking \(a=a_{k+1}\) gives the coupling inequality
\begin{equation}
\label{eq:appendix-coupling}
  A_{k+1}\F(y_k)-A_k\F(x_k)
  \le
  a_{k+1}\inner{\nabla f(y_k)}{v_k-x^*}.
\end{equation}

Define the potential
\[
  \Phi_k
  =
  A_k\F(x_k)+\frac12\norm{v_k-x^*}^2 .
\]
Multiplying \eqref{eq:appendix-full-descent} by the nonnegative weight
\(A_{k+1}\) gives
\begin{equation}
\label{eq:appendix-full-weighted-descent}
  A_{k+1}\F(x_{k+1})
  \le
  A_{k+1}\F(y_k)
  -
  \frac{A_{k+1}}{2M_k}\norm{\nabla f(y_k)}^2 .
\end{equation}
Expanding the squared norm in the update
\(v_{k+1}=v_k-a_{k+1}\nabla f(y_k)\) gives the identity
\begin{equation}
\label{eq:appendix-full-distance-update}
  \frac12\norm{v_{k+1}-x^*}^2
  =
  \frac12\norm{v_k-x^*}^2
  -
  a_{k+1}\inner{\nabla f(y_k)}{v_k-x^*}
  +
  \frac{a_{k+1}^2}{2}\norm{\nabla f(y_k)}^2 .
\end{equation}
Adding \eqref{eq:appendix-full-weighted-descent} and
\eqref{eq:appendix-full-distance-update}, and then using
\eqref{eq:appendix-coupling} to bound
\(A_{k+1}\F(y_k)
-a_{k+1}\inner{\nabla f(y_k)}{v_k-x^*}\) by
\(A_k\F(x_k)\), gives
\[
  \Phi_{k+1}
  \le
  \Phi_k
  +
  \left(
    \frac{a_{k+1}^2}{2}
    -
    \frac{A_{k+1}}{2M_k}
  \right)\norm{\nabla f(y_k)}^2 .
\]
The defining relation \(M_ka_{k+1}^2=\theta A_{k+1}\) yields
\[
  \frac{a_{k+1}^2}{2}
  -
  \frac{A_{k+1}}{2M_k}
  =
  -\frac{(1-\theta)A_{k+1}}{2M_k}
  \le0.
\]
The last inequality uses \(\theta\in(0,1]\), and hence
\(\Phi_{k+1}\le\Phi_k\).  Repeated application from \(k=0\) to \(N-1\),
together with \(A_0=0\) and \(v_0=z\), yields
\begin{equation}
\label{eq:appendix-full-potential}
  A_N\F(x_N)\le \Phi_N\le \Phi_0=\frac12\norm{z-x^*}^2 .
\end{equation}
Here the first inequality follows because the second term
\(\frac12\norm{v_N-x^*}^2\) in \(\Phi_N\) is nonnegative.

\paragraph{One-run estimate.}
We now derive a lower bound on \(A_N\).  Since
\[
  M_ka_{k+1}^2=\theta A_{k+1},
  \qquad
  A_{k+1}-A_k=a_{k+1},
\]
and \(a_{k+1}>0\), the sequence \(\{A_k\}\) is nondecreasing.  Therefore
\[
\begin{aligned}
  \sqrt{A_{k+1}}-\sqrt{A_k}
  =
  \frac{a_{k+1}}{\sqrt{A_{k+1}}+\sqrt{A_k}}
  \ge
  \frac{a_{k+1}}{2\sqrt{A_{k+1}}}
  =
  \frac{1}{2}\sqrt{\frac{\theta}{M_k}}.
\end{aligned}
\]
The first equality is the difference-of-squares identity, the inequality uses
\(A_k\le A_{k+1}\), and the last equality follows from
\(M_ka_{k+1}^2=\theta A_{k+1}\).  Summing over
\(k=0,\ldots,N-1\) telescopes the left-hand side; because \(A_0=0\), this
gives
\begin{equation}
\label{eq:appendix-full-weight-sum}
  \sqrt{A_N}
  \ge
  \frac{\sqrt\theta}{2}
  \sum_{k=0}^{N-1}\frac{1}{\sqrt{M_k}}.
\end{equation}
The upper bound in \eqref{eq:appendix-full-curvature-control} implies
\begin{equation}
\label{eq:appendix-full-reciprocal-curvature}
  \frac{1}{\sqrt{M_k}}
  \ge
  \frac{1}{\sqrt{\tau\cstar}}
  \frac{1}{\sqrt{H_0+H_1\F(y_k)}}.
\end{equation}
Substituting \eqref{eq:appendix-full-reciprocal-curvature} into
\eqref{eq:appendix-full-weight-sum} and squaring both nonnegative sides yields
\begin{equation}
\label{eq:appendix-full-weight-lower}
  A_N
  \ge
  \frac{\theta}{4\tau\cstar}
  \left(
    \sum_{k=0}^{N-1}
    \frac{1}{\sqrt{H_0+H_1\F(y_k)}}
  \right)^2.
\end{equation}
Combining \eqref{eq:appendix-full-weight-lower} with
\eqref{eq:appendix-full-potential}, and then dividing by the positive
quantity \(A_N\), gives
\begin{equation}
\label{eq:appendix-main-estimate}
  \F(x_N)
  \le
  \frac{
    2\tau\cstar\norm{z-x^*}^2
  }{
    \theta
    \left(
      \displaystyle\sum_{k=0}^{N-1}
      \frac{1}{\sqrt{H_0+H_1\F(y_k)}}
    \right)^2
  }.
\end{equation}

\paragraph{Restart radius and phase contraction.}
We first isolate the geometric fact that makes a uniform restart radius
available.

\begin{lemma}[Bounded initial sublevel set]
\label{lem:bounded-initial-sublevel}
Let \(f:\mathbb R^d\to\mathbb R\) be continuous and convex, and suppose that
\(X^*:=\argmin_{x\in\mathbb R^d}f(x)\) is nonempty and bounded.  Then, for
every \(x^0\in\mathbb R^d\), the initial sublevel set
\(\mathcal C_0:=\{x\in\mathbb R^d:f(x)\le f(x^0)\}\) is compact.
Consequently, for any fixed \(x^*\in X^*\),
\begin{equation}
\label{eq:sublevel-radius}
  \widetilde R
  :=
  \sup_{f(x)\le f(x^0)}
  \norm{x-x^*}
\end{equation}
is finite.
\end{lemma}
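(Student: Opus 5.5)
The plan is to use the classical fact that a convex function with a nonempty bounded set of minimizers has all sublevel sets bounded; closedness is free. Since \(f\) is continuous, \(\mathcal C_0=f^{-1}((-\infty,f(x^0)])\) is closed. So compactness reduces to boundedness, and finiteness of \(\widetilde R\) follows at once: \(\norm{x-x^*}\) is continuous and attains a finite maximum on the compact set \(\mathcal C_0\), which is nonempty because it contains \(x^0\).

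\textbf{Boundedness.} First, \(X^*\) is closed, being the sublevel set \(\{f\le f^*\}\), so it is compact. Choose \(\rho>0\) with \(X^*\subset B(x^*,\rho)\), and let \(S:=\{x:\norm{x-x^*}=\rho+1\}\). This sphere is compact and disjoint from \(X^*\), so by continuity \(m:=\min_{S}f>f^*\).

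Now take any \(x\) with \(\norm{x-x^*}=t>\rho+1\), and set \(\lambda:=(\rho+1)/t\in(0,1)\). The point \(u:=x^*+\lambda(x-x^*)\) lies on \(S\). Convexity gives
\[
f(u)\le(1-\lambda)f^*+\lambda f(x).
\]
Hence \(f(x)-f^*\ge (m-f^*)\,t/(\rho+1)\), so \(f\) grows at least linearly in \(\norm{x-x^*}\). Consequently, any \(x\in\mathcal C_0\) satisfies
\[
\norm{x-x^*}\le \max\left\{\rho+1,\;(\rho+1)\frac{F_0}{m-f^*}\right\},
\]
which proves boundedness.

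\textbf{Main obstacle.} The only delicate point is the strict inequality \(m>f^*\). It relies on compactness of the sphere, continuity of \(f\), and the fact that no minimizer lies on \(S\). Once this positive margin is in hand, the convexity interpolation is routine.

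Finally, the constant \(\widetilde R\) depends on the choice of \(x^*\). This does not matter, because the lemma fixes \(x^*\in X^*\), and a different choice changes \(\widetilde R\) by at most \(\operatorname{diam}X^*\).
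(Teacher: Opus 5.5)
Your proof is correct, but it takes a different route from the paper's.

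The paper argues by contradiction. It assumes \(\mathcal C_0\) is unbounded, picks \(u_j\in\mathcal C_0\) with \(r_j=\norm{u_j-x^*}\to\infty\), and extracts a convergent subsequence of unit directions \(d_j=(u_j-x^*)/r_j\to d\). The same convexity interpolation between \(x^*\) and \(u_j\) that you use then gives \(f(x^*+td_j)\le f^*+tF_0/r_j\). Letting \(j\to\infty\) puts the whole ray \(\{x^*+td:t\ge0\}\) inside \(X^*\), which contradicts boundedness. This is the standard recession-direction argument.

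Both proofs rest on compactness of a sphere, continuity, and convexity along segments issuing from \(x^*\). They use the sphere differently:
\begin{itemize}
\item You use compactness of the sphere of radius \(\rho+1\) in the space itself to obtain a strictly positive margin \(m-f^*>0\).
\item The paper uses compactness of the unit sphere of directions to produce a limiting direction.
\end{itemize}

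What your direct argument buys is a quantitative conclusion. It gives a linear growth bound \(f(x)-f^*\ge(m-f^*)\norm{x-x^*}/(\rho+1)\) outside the ball. It also gives the explicit estimate \(\widetilde R\le(\rho+1)\max\{1,F_0/(m-f^*)\}\), and it bounds every sublevel set at once. The paper's argument is shorter and purely qualitative, and it needs no auxiliary radius \(\rho\).

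Your remark that \(X^*\) is closed and hence compact is not actually used. Boundedness of \(X^*\) alone lets you choose \(\rho\) so that the sphere \(S\) misses \(X^*\). Compactness of \(S\) and continuity of \(f\) then give \(m>f^*\).
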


\begin{proof}[Proof of Lemma~\ref{lem:bounded-initial-sublevel}]
Continuity of \(f\) implies that \(\mathcal C_0\) is closed.  Suppose for
contradiction that \(\mathcal C_0\) is unbounded.  Then there is a sequence
\(\{u_j\}\subset\mathcal C_0\) such that
\(r_j:=\norm{u_j-x^*}\to\infty\).  Define
\(d_j:=(u_j-x^*)/r_j\).  Every \(d_j\) has unit norm, so compactness of the
unit sphere gives a subsequence, denoted in the same way, such that
\(d_j\to d\) for some \(d\) with \(\norm d=1\).  Fix any \(t\ge0\).  For all
sufficiently large \(j\), \(t\le r_j\), and
\[
  x^*+td_j
  =
  \left(1-\frac{t}{r_j}\right)x^*
  +
  \frac{t}{r_j}u_j.
\]
The coefficients on the right are nonnegative and sum to one.  Convexity and
the inequalities \(f(x^*)=f^*\) and \(f(u_j)\le f(x^0)\) therefore imply
\[
  f(x^*+td_j)
  \le
  \left(1-\frac{t}{r_j}\right)f^*
  +
  \frac{t}{r_j}f(x^0)
  =
  f^*+\frac{t}{r_j}F_0.
\]
Letting \(j\to\infty\) and using continuity gives
\(f(x^*+td)\le f^*\).  Since \(f^*\) is the minimum value, equality must
hold, and hence \(x^*+td\in X^*\) for every \(t\ge0\).  This produces an
unbounded ray in \(X^*\), contradicting the assumed boundedness of the solution
set.  Thus \(\mathcal C_0\) is bounded and, being closed, compact.  The
continuous map \(x\mapsto\norm{x-x^*}\) attains a finite maximum on
\(\mathcal C_0\), proving the final claim.
\end{proof}

Let phase \(s\) start from \(z_s\) with \(\F(z_s)\le \Delta_s\), and let the
inner method run for \(N_s\) iterations.  By
\eqref{eq:appendix-full-monotonicity} and \(x_0=z_s\),
\(f(y_k)\le f(x_k)\le f(z_s)\), so every interpolation point in this phase
satisfies
\begin{equation}
\label{eq:appendix-full-phase-gap}
  \F(y_k)\le \Delta_s.
\end{equation}
Moreover, monotonicity keeps \(z_s\) in \(\mathcal C_0\), so
Lemma~\ref{lem:bounded-initial-sublevel} yields
\begin{equation}
\label{eq:appendix-full-phase-radius}
  \norm{z_s-x^*}\le \widetilde R .
\end{equation}
The bound \eqref{eq:appendix-full-phase-gap}, together with \(H_1\ge0\),
implies
\begin{equation}
\label{eq:appendix-full-phase-sum}
  \sum_{k=0}^{N_s-1}
  \frac{1}{\sqrt{H_0+H_1\F(y_k)}}
  \ge
  \frac{N_s}{\sqrt{H_0+H_1\Delta_s}}.
\end{equation}
Indeed, each denominator on the left is no larger than
\(\sqrt{H_0+H_1\Delta_s}\).  Applying
\eqref{eq:appendix-main-estimate} with \(z=z_s\), \(N=N_s\), and
\(z_{s+1}=x_{N_s}\), and then using
\eqref{eq:appendix-full-phase-radius} and
\eqref{eq:appendix-full-phase-sum}, gives
\begin{equation}
\label{eq:appendix-full-phase}
  \F(z_{s+1})
  \le
  \frac{2\tau\cstar \widetilde R^2}{\theta}
  \frac{H_0+H_1\Delta_s}{N_s^2}.
\end{equation}
The phase-length rule ensures
\[
  N_s
  \ge
  2\widetilde R\sqrt{\frac{\tau\cstar}{\theta}}
  \sqrt{\frac{H_0}{\Delta_s}+H_1}.
\]
Squaring this inequality gives
\[
  N_s^2
  \ge
  \frac{4\tau\cstar\widetilde R^2}{\theta}
  \frac{H_0+H_1\Delta_s}{\Delta_s}.
\]
Substitution into the phase estimate \eqref{eq:appendix-full-phase} yields
\begin{equation}
\label{eq:appendix-full-contraction}
  \F(z_{s+1})
  \le
  \frac{2\tau\cstar\widetilde R^2}{\theta}
  (H_0+H_1\Delta_s)
  \frac{\theta\Delta_s}
  {4\tau\cstar\widetilde R^2(H_0+H_1\Delta_s)}
  =
  \frac{\Delta_s}{2}
  =
  \Delta_{s+1}.
\end{equation}
The equality \(\Delta_{s+1}=\Delta_s/2\) follows from
\(\Delta_s=F_0/2^s\).  Since \(z_0=x^0\) and
\(\F(z_0)=F_0=\Delta_0\), repeated application of
\eqref{eq:appendix-full-contraction} proves
\[
  \F(z_s)\le \Delta_s .
\]
For
\[
  S=\left\lceil\log_2\frac{F_0}{\varepsilon}\right\rceil
\]
we have \(2^S\ge F_0/\varepsilon\), and hence
\(\Delta_S=F_0/2^S\le\varepsilon\).  Combining this with the restart invariant
gives \(\F(z_S)\le\Delta_S\le\varepsilon\).

\paragraph{Total complexity.}
The total number of inner iterations is
\[
  N_{\rm tot}=\sum_{s=0}^{S-1}N_s.
\]
For brevity, denote the quantity inside the ceiling in the phase-length rule by
\begin{equation}
\label{eq:appendix-full-Ts}
  T_s
  :=
  2\widetilde R\sqrt{\frac{\tau\cstar}{\theta}}
  \sqrt{\frac{H_0}{\Delta_s}+H_1},
  \qquad
  N_s=\lceil T_s\rceil.
\end{equation}
We first show that the ceiling contributes only an absolute factor.  Since
\(\varepsilon>0\) and \(\varepsilon\le F_0\), the nontrivial case has
\(F_0>0\), and convexity implies \(\nabla f(x^0)\ne0\).  At the first inner
iteration of phase zero, \(x_0=v_0=y_0=x^0\).  Convexity and Cauchy--Schwarz
give
\begin{equation}
\label{eq:appendix-full-initial-convexity}
  F_0
  \le
  \inner{\nabla f(x^0)}{x^0-x^*}
  \le
  \norm{\nabla f(x^0)}\,\widetilde R.
\end{equation}
The descent inequality \eqref{eq:appendix-full-descent} for this first step,
together with
\(f(x_1)\ge f^*\) and
\(M_0\le\tau\cstar(H_0+H_1F_0)\), gives
\begin{equation}
\label{eq:appendix-full-initial-gradient}
  \norm{\nabla f(x^0)}^2
  \le
  2M_0F_0
  \le
  2\tau\cstar(H_0+H_1F_0)F_0.
\end{equation}
Squaring \eqref{eq:appendix-full-initial-convexity} and substituting
\eqref{eq:appendix-full-initial-gradient} yields
\begin{equation}
\label{eq:appendix-full-initial-chain}
  F_0^2
  \le
  2\tau\cstar\widetilde R^2(H_0+H_1F_0)F_0,
\end{equation}
and division of \eqref{eq:appendix-full-initial-chain} by \(F_0^2>0\) shows
that
\begin{equation}
\label{eq:appendix-full-initial-curvature-radius}
  \widetilde R^2\left(\frac{H_0}{F_0}+H_1\right)
  \ge
  \frac{1}{2\tau\cstar}.
\end{equation}
Consequently,
\eqref{eq:appendix-full-Ts} and
\eqref{eq:appendix-full-initial-curvature-radius} give
\(T_0\ge\sqrt{2/\theta}\ge\sqrt2\).  Since
\(\Delta_s=F_0/2^s\) is nonincreasing, \(T_s\ge T_0\ge\sqrt2\) for every
phase.  It follows that
\(N_s=\lceil T_s\rceil\le T_s+1\le2T_s\).  Therefore
\begin{equation}
\label{eq:appendix-full-complexity-sum}
\begin{aligned}
  N_{\rm tot}
  &\le
  4\widetilde R\sqrt{\frac{\tau\cstar}{\theta}}
  \sum_{s=0}^{S-1}
  \sqrt{\frac{H_0}{\Delta_s}+H_1} \\
  &\le
  4\widetilde R\sqrt{\frac{\tau\cstar}{\theta}}
  \left(
    \sqrt{H_0}\sum_{s=0}^{S-1}\frac{1}{\sqrt{\Delta_s}}
    +
    S\sqrt{H_1}
  \right),
\end{aligned}
\end{equation}
where the second inequality in \eqref{eq:appendix-full-complexity-sum} uses
\(\sqrt{a+b}\le\sqrt a+\sqrt b\) for
\(a,b\ge0\).  The first sum is geometric:
\begin{equation}
\label{eq:appendix-full-geometric-sum}
\begin{aligned}
  \sum_{s=0}^{S-1}\frac{1}{\sqrt{\Delta_s}}
  &=
  \frac{1}{\sqrt{F_0}}
  \sum_{s=0}^{S-1}2^{s/2} \\
  &=
  \frac{2^{S/2}-1}{(\sqrt2-1)\sqrt{F_0}} \\
  &\le
  \frac{\sqrt2}{\sqrt2-1}\frac{1}{\sqrt\varepsilon}.
\end{aligned}
\end{equation}
The last inequality in \eqref{eq:appendix-full-geometric-sum} follows from
\(S=\lceil\log_2(F_0/\varepsilon)\rceil\), which implies
\(2^S\le2F_0/\varepsilon\) whenever \(S>0\); for \(S=0\), the sum is empty.
Finally,
\(S\le\log_2(2F_0/\varepsilon)\).  Thus, with the standard convention that a
logarithmic complexity factor is truncated below by one,
\(S=O(\log(F_0/\varepsilon))\).  Substituting
\eqref{eq:appendix-full-geometric-sum} and
\(S=O(\log(F_0/\varepsilon))\) into
\eqref{eq:appendix-full-complexity-sum} gives
\[
  N_{\rm tot}
  =
  O\left(
    \widetilde R\sqrt{\frac{\tau\cstar H_0}{\theta\varepsilon}}
    +
    \widetilde R\sqrt{\frac{\tau\cstar H_1}{\theta}}
    \log\frac{F_0}{\varepsilon}
  \right).
\]

\end{proof}

\section{Proof of Theorem~\ref{thm:coordinate-main}}
\label{app:coordinate-proof}

\begin{mainresultbox}
\noindent\textbf{Theorem~\ref{thm:coordinate-main}
(Coordinate complexity).}\enspace
{\itshape Suppose Assumptions~\ref{ass:convexity} and
\ref{ass:h0h1-smoothness} hold, and let \(\varepsilon\in(0,F_0]\).  Define
\(\mathcal N(\Delta):=\left\lceil
2d\widetilde R\sqrt{\frac{\tau\cstar}{\theta}}
\sqrt{\frac{H_0}{\Delta}+H_1}\right\rceil\).
Then Algorithm~\ref{alg:restart-meta}, with
Algorithm~\ref{alg:coordinate-inner} as its inner method, returns a point
\(z_S\) satisfying \(\E[f(z_S)-f(x^*)]\le\varepsilon\).  Moreover, the total
number of iterations satisfies
\[
  N_{\rm tot}^{\rm coord}
  =
  O\left(
    d\widetilde R\sqrt{\frac{\tau\cstar H_0}{\theta\varepsilon}}
    +
    d\widetilde R\sqrt{\frac{\tau\cstar H_1}{\theta}}
    \log\frac{F_0}{\varepsilon}
  \right).
\]
\par}
\end{mainresultbox}

\begin{proof}
The proof has five main steps.  First,
Lemma~\ref{lem:radius-curvature-bound} guarantees that
Algorithm~\ref{alg:coordinate-inner} satisfies
\(\lvert\nabla_i f(y_k)\rvert/M_k\le r_1\) for every coordinate \(i\), so the
local model yields \eqref{eq:appendix-coordinate-descent}.  Second, exact
segment minimization and the conditional moments of uniform sampling give the
coupling and potential estimates
\eqref{eq:appendix-coordinate-coupling}--\eqref{eq:appendix-coordinate-potential}.
Third, growth of the accelerated weights turns the potential estimate into the
one-phase guarantee \eqref{eq:appendix-coordinate-phase}.  Fourth, the
phase-length rule in Theorem~\ref{thm:coordinate-main} gives the expected
contraction \eqref{eq:appendix-coordinate-contraction}.  Finally, we iterate
this contraction and sum the phase lengths.

\paragraph{Coordinate descent.}
Consider one run of Algorithm~\ref{alg:coordinate-inner}, initialized at
\(x_0=v_0=z\) and \(A_0=0\).  Throughout the analysis of this run, \(z\) is
fixed and expectations are taken over the coordinates sampled within the run.
We first verify that every coordinate step lies inside the radius of the local
quadratic model.  Recall that
\(r_1:=1/((2\sqrt3+\sqrt6)\sqrt{H_1})\) when \(H_1>0\), while
\(r_1:=\infty\) when \(H_1=0\).
When \(H_1>0\), Lemma~\ref{lem:radius-curvature-bound}, the inequality
\(\lvert\nabla_i f(y_k)\rvert\le\norm{\nabla f(y_k)}\), and
\(M_k=\tau\cstar(H_0+H_1\F(y_k))\) give, for every coordinate \(i\),
\begin{equation}
\label{eq:appendix-coordinate-radius-bound}
  \frac{\lvert\nabla_i f(y_k)\rvert}{M_k}
  \le
  \frac{\norm{\nabla f(y_k)}}{
    \tau\cstar(H_0+H_1\F(y_k))}
  \le
  \frac{r_1}{\tau}
  \le
  r_1.
\end{equation}
The first inequality in \eqref{eq:appendix-coordinate-radius-bound} uses
\(\lvert\nabla_i f(y_k)\rvert\le\norm{\nabla f(y_k)}\).  For the second
inequality in \eqref{eq:appendix-coordinate-radius-bound},
Lemma~\ref{lem:radius-curvature-bound} gives
\(\norm{\nabla f(y_k)}/r_1
\le\cstar(H_0+H_1\F(y_k))\).  Multiplying by \(r_1\) and dividing by
\(\tau\cstar(H_0+H_1\F(y_k))\) yields the factor \(r_1/\tau\).
The last inequality in \eqref{eq:appendix-coordinate-radius-bound} follows
from \(\tau\ge1\).
When \(H_1=0\), the conclusion is automatic because \(r_1=\infty\).
Moreover, \(\tau\cstar\ge2\) implies
\(M_k\ge2(H_0+H_1\F(y_k))\).

After coordinate \(i_k\) is sampled, the primal update satisfies
\begin{equation}
\label{eq:appendix-coordinate-step-radius}
\begin{aligned}
  \norm{x_{k+1}-y_k}
  &=
  \left\|
    -\frac{\nabla_{i_k}f(y_k)}{M_k}e_{i_k}
  \right\| \\
  &=
  \frac{\lvert\nabla_{i_k}f(y_k)\rvert}{M_k}
  \le
  r_1.
\end{aligned}
\end{equation}
The first equality in \eqref{eq:appendix-coordinate-step-radius} follows by
subtracting \(y_k\) from the update defining \(x_{k+1}\); the second uses the
absolute homogeneity of the norm and \(\norm{e_{i_k}}=1\); and the last
inequality follows from
\eqref{eq:appendix-coordinate-radius-bound} with \(i=i_k\).  Hence the local
model is valid with
base point \(y_k\) and trial point \(x_{k+1}\).  It gives
\begin{equation}
\label{eq:appendix-coordinate-local-model}
\begin{aligned}
  f(x_{k+1})
  &\le
  f(y_k)
  +
  \inner{\nabla f(y_k)}{x_{k+1}-y_k}
  +\bigl(H_0+H_1\F(y_k)\bigr)
    \norm{x_{k+1}-y_k}^2 \\
  &=
  f(y_k)
  -\frac{(\nabla_{i_k}f(y_k))^2}{M_k}
  +\frac{H_0+H_1\F(y_k)}{M_k^2}
    (\nabla_{i_k}f(y_k))^2.
\end{aligned}
\end{equation}
The equality follows by substituting
\(x_{k+1}-y_k=-M_k^{-1}\nabla_{i_k}f(y_k)e_{i_k}\), which gives
\[
  \inner{\nabla f(y_k)}{x_{k+1}-y_k}
  =
  -\frac{(\nabla_{i_k}f(y_k))^2}{M_k},
  \qquad
  \norm{x_{k+1}-y_k}^2
  =
  \frac{(\nabla_{i_k}f(y_k))^2}{M_k^2}.
\]
Since
\(\tau\cstar\ge2\), the definition of \(M_k\) implies
\(M_k\ge2(H_0+H_1\F(y_k))\), and therefore
\begin{equation}
\label{eq:appendix-coordinate-model-coefficient}
  \frac{H_0+H_1\F(y_k)}{M_k^2}
  \le
  \frac{1}{2M_k}.
\end{equation}
Indeed, dividing
\(H_0+H_1\F(y_k)\le M_k/2\) by \(M_k^2>0\) gives
\eqref{eq:appendix-coordinate-model-coefficient}.
Substituting \eqref{eq:appendix-coordinate-model-coefficient} into
\eqref{eq:appendix-coordinate-local-model} and subtracting \(f^*\) from both
sides gives the coordinate descent inequality
\begin{equation}
\label{eq:appendix-coordinate-descent}
  \F(x_{k+1})
  \le
  \F(y_k)
  -
  \frac{(\nabla_{i_k}f(y_k))^2}{2M_k}.
\end{equation}
The choice \(\beta=1\) is feasible in the segment-relaxation problem and gives
\(v_k+\beta(x_k-v_k)=x_k\).  Hence
\(f(y_k)=\min_{\beta\in[0,1]}f(v_k+\beta(x_k-v_k))\le f(x_k)\).  Combining
this observation with
\eqref{eq:appendix-coordinate-descent} shows, for every realization of
\(i_k\), that
\begin{equation}
\label{eq:appendix-coordinate-monotonicity}
  f(x_{k+1})\le f(y_k)\le f(x_k).
\end{equation}
Thus both the primal iterates and the interpolation points remain below the
objective value at the beginning of the inner run.

\paragraph{Conditional accelerated potential.}
We next establish the accelerated coupling.  Let \(\mathcal F_k\) denote the
\(\sigma\)-algebra generated by the coordinate choices made before iteration
\(k\).  The points \(x_k,v_k,y_k\), the curvature \(M_k\), and the weights
\(a_{k+1},A_{k+1}\) are determined before \(i_k\) is sampled and are therefore
\(\mathcal F_k\)-measurable.  Recall that
\[
  y_k\in\argmin_{\beta\in[0,1]}
  f\bigl(v_k+\beta(x_k-v_k)\bigr),
\]
define
\[
  w_k
  :=
  \frac{A_kx_k+a_{k+1}v_k}{A_{k+1}}.
\]
Because \(A_{k+1}=A_k+a_{k+1}\), the coefficients in \(w_k\) are
nonnegative and sum to one.  Thus \(w_k\) lies on the segment minimized in the
definition of \(y_k\), and the first-order optimality condition for this
convex one-dimensional problem implies
\begin{equation}
\label{eq:appendix-coordinate-segment-optimality}
  \inner{\nabla f(y_k)}{w_k-y_k}
  \ge0.
\end{equation}
Convexity of \(f\), applied at \(y_k\) with comparison points \(x_k\) and
\(x^*\), respectively, gives
\begin{equation}
\label{eq:appendix-coordinate-convexity}
\begin{aligned}
  f(y_k)-f(x_k)
  &\le
  \inner{\nabla f(y_k)}{y_k-x_k}, \\
  \F(y_k)
  &\le
  \inner{\nabla f(y_k)}{y_k-x^*}.
\end{aligned}
\end{equation}
The first inequality in \eqref{eq:appendix-coordinate-convexity} is obtained
by rearranging
\(f(x_k)\ge f(y_k)+\inner{\nabla f(y_k)}{x_k-y_k}\).  Similarly, the second
inequality in \eqref{eq:appendix-coordinate-convexity} follows by rearranging
\(f^*=f(x^*)\ge f(y_k)+\inner{\nabla f(y_k)}{x^*-y_k}\).
Because \(A_k\ge0\) and \(a_{k+1}>0\), multiplying the two inequalities in
\eqref{eq:appendix-coordinate-convexity} by \(A_k\) and \(a_{k+1}\),
respectively, preserves their directions.  Adding them gives
\begin{equation}
\label{eq:appendix-coordinate-weighted-convexity}
\begin{aligned}
  A_{k+1}\F(y_k)-A_k\F(x_k)
  &=
  A_k\bigl(f(y_k)-f(x_k)\bigr)
  +a_{k+1}\F(y_k) \\
  &\le
  \inner{\nabla f(y_k)}{
    A_k(y_k-x_k)+a_{k+1}(y_k-x^*)
  }.
\end{aligned}
\end{equation}
To verify the equality in
\eqref{eq:appendix-coordinate-weighted-convexity}, use
\(A_{k+1}=A_k+a_{k+1}\) and
\(\F(y_k)-\F(x_k)=f(y_k)-f(x_k)\):
\[
\begin{aligned}
  A_{k+1}\F(y_k)-A_k\F(x_k)
  &=
  (A_k+a_{k+1})\F(y_k)-A_k\F(x_k) \\
  &=
  A_k\bigl(\F(y_k)-\F(x_k)\bigr)
  +a_{k+1}\F(y_k) \\
  &=
  A_k\bigl(f(y_k)-f(x_k)\bigr)
  +a_{k+1}\F(y_k).
\end{aligned}
\]
The inequality then follows by substituting the two convexity bounds and
using the linearity of the inner product to combine their right-hand sides.

The vector inside the last inner product in
\eqref{eq:appendix-coordinate-weighted-convexity} can be written as
\begin{equation}
\label{eq:appendix-coordinate-vector-identity}
\begin{aligned}
  A_k(y_k-x_k)+a_{k+1}(y_k-x^*)
  ={}&
  a_{k+1}(v_k-x^*) \\
  &+
  A_{k+1}(y_k-w_k).
\end{aligned}
\end{equation}
Indeed, substituting the definition of \(w_k\) and expanding the right-hand
side of \eqref{eq:appendix-coordinate-vector-identity} gives
\[
\begin{aligned}
  &a_{k+1}(v_k-x^*)
  +A_{k+1}\left(
    y_k-\frac{A_kx_k+a_{k+1}v_k}{A_{k+1}}
  \right) \\
  &\quad=
  a_{k+1}v_k-a_{k+1}x^*
  +A_{k+1}y_k-A_kx_k-a_{k+1}v_k \\
  &\quad=
  A_{k+1}y_k-A_kx_k-a_{k+1}x^* \\
  &\quad=
  A_k(y_k-x_k)+a_{k+1}(y_k-x^*),
\end{aligned}
\]
where the last equality again uses \(A_{k+1}=A_k+a_{k+1}\).
Substituting \eqref{eq:appendix-coordinate-vector-identity} into
\eqref{eq:appendix-coordinate-weighted-convexity} gives
\begin{equation}
\label{eq:appendix-coordinate-coupling}
\begin{aligned}
  A_{k+1}\F(y_k)-A_k\F(x_k)
  &\le
  a_{k+1}\inner{\nabla f(y_k)}{v_k-x^*}
  {}+
  A_{k+1}\inner{\nabla f(y_k)}{y_k-w_k} \\
  &\le
  a_{k+1}\inner{\nabla f(y_k)}{v_k-x^*}.
\end{aligned}
\end{equation}
The second inequality in \eqref{eq:appendix-coordinate-coupling} follows from
\eqref{eq:appendix-coordinate-segment-optimality}: that condition is
\(\inner{\nabla f(y_k)}{w_k-y_k}\ge0\), or equivalently
\(\inner{\nabla f(y_k)}{y_k-w_k}\le0\).  Multiplication by
\(A_{k+1}>0\) preserves this inequality, so the second term in the first line
of \eqref{eq:appendix-coordinate-coupling} is nonpositive.

Uniform sampling and \(\mathcal F_k\)-measurability of \(y_k\) imply
\begin{equation}
\label{eq:appendix-coordinate-moments}
\begin{aligned}
  \E\!\left[
    d\nabla_{i_k}f(y_k)e_{i_k}
    \,\middle|\,\mathcal F_k
  \right]
  &=
  \sum_{i=1}^d
  \frac1d\,d\nabla_i f(y_k)e_i
  =
  \nabla f(y_k), \\
  \E\!\left[
    (\nabla_{i_k}f(y_k))^2
    \,\middle|\,\mathcal F_k
  \right]
  &=
  \sum_{i=1}^d\frac1d(\nabla_i f(y_k))^2
  =
  \frac1d\norm{\nabla f(y_k)}^2.
\end{aligned}
\end{equation}
Conditionally on \(\mathcal F_k\), each coordinate is selected with
probability \(\Pr(i_k=i\mid\mathcal F_k)=1/d\).  In the first identity of
\eqref{eq:appendix-coordinate-moments}, scaling the sampled vector by \(d\)
compensates for the sampling probability \(1/d\), since
\((1/d)d=1\).  Therefore
\(\sum_{i=1}^d\nabla_i f(y_k)e_i=\nabla f(y_k)\), so
\(d\nabla_{i_k}f(y_k)e_{i_k}\) is an unbiased estimator of the full gradient.
The squared partial derivative is not multiplied by \(d\); consequently, its
uniform average retains the factor \(1/d\):
\(\E[(\nabla_{i_k}f(y_k))^2\mid\mathcal F_k]
=(1/d)\sum_{i=1}^d(\nabla_i f(y_k))^2
=(1/d)\norm{\nabla f(y_k)}^2\).
Multiplying \eqref{eq:appendix-coordinate-descent} by the
\(\mathcal F_k\)-measurable nonnegative weight \(A_{k+1}\) and taking
conditional expectation, we may move
\(A_{k+1}\), \(M_k\), and \(\F(y_k)\) outside the expectation.  Using the
second identity in
\eqref{eq:appendix-coordinate-moments} gives
\begin{equation}
\label{eq:appendix-coordinate-expected-descent}
\begin{aligned}
  \E\!\left[
    A_{k+1}\F(x_{k+1})
    \,\middle|\,\mathcal F_k
  \right]
  &\le
  A_{k+1}\F(y_k)
  -
  \frac{A_{k+1}}{2M_k}
  \E\!\left[
    (\nabla_{i_k}f(y_k))^2
    \,\middle|\,\mathcal F_k
  \right] \\
  &=
  A_{k+1}\F(y_k)
  -
  \frac{A_{k+1}}{2dM_k}
  \norm{\nabla f(y_k)}^2.
\end{aligned}
\end{equation}
Expanding the squared norm in
\[
  v_{k+1}
  =
  v_k-da_{k+1}\nabla_{i_k}f(y_k)e_{i_k}
\]
gives, before taking expectation,
\[
\begin{aligned}
  \frac12\norm{v_{k+1}-x^*}^2
  ={}&
  \frac12\norm{v_k-x^*}^2
  -d a_{k+1}\nabla_{i_k}f(y_k)
    \inner{e_{i_k}}{v_k-x^*} \\
  &+
  \frac{d^2a_{k+1}^2}{2}
  (\nabla_{i_k}f(y_k))^2.
\end{aligned}
\]
Conditioning on \(\mathcal F_k\), the first identity in
\eqref{eq:appendix-coordinate-moments} turns the middle term into
\(-a_{k+1}\inner{\nabla f(y_k)}{v_k-x^*}\), while the second identity in
\eqref{eq:appendix-coordinate-moments} turns the last term into
\((d a_{k+1}^2/2)\norm{\nabla f(y_k)}^2\).  Hence
\begin{equation}
\label{eq:appendix-coordinate-expected-distance}
\begin{aligned}
  \E\!\left[
    \frac12\norm{v_{k+1}-x^*}^2
    \,\middle|\,\mathcal F_k
  \right]
  ={}&
  \frac12\norm{v_k-x^*}^2
  -
  a_{k+1}\inner{\nabla f(y_k)}{v_k-x^*} \\
  &+
  \frac{d a_{k+1}^2}{2}\norm{\nabla f(y_k)}^2.
\end{aligned}
\end{equation}
Define the nonnegative potential
\[
  \Phi_k
  :=
  A_k\F(x_k)+\frac12\norm{v_k-x^*}^2.
\]
It is nonnegative because \(A_k\ge0\), \(\F(x_k)\ge0\), and the squared norm
is nonnegative.
Adding \eqref{eq:appendix-coordinate-expected-descent} and
\eqref{eq:appendix-coordinate-expected-distance}, and then applying
\eqref{eq:appendix-coordinate-coupling} gives
\[
  \E[\Phi_{k+1}\mid\mathcal F_k]
  \le
  \Phi_k
  +
  \left(
    \frac{d a_{k+1}^2}{2}
    -
    \frac{A_{k+1}}{2dM_k}
  \right)
  \norm{\nabla f(y_k)}^2.
\]
Indeed, rearranging \eqref{eq:appendix-coordinate-coupling} yields
\[
  A_{k+1}\F(y_k)
  -a_{k+1}\inner{\nabla f(y_k)}{v_k-x^*}
  \le
  A_k\F(x_k).
\]
Thus \eqref{eq:appendix-coordinate-coupling} bounds the two linear terms
obtained after adding \eqref{eq:appendix-coordinate-expected-descent} and
\eqref{eq:appendix-coordinate-expected-distance} by \(A_k\F(x_k)\); together
with \(\frac12\norm{v_k-x^*}^2\), they form \(\Phi_k\).
By the weight equation in Algorithm~\ref{alg:coordinate-inner},
\(d^2M_ka_{k+1}^2=\theta A_{k+1}\).  Hence
\begin{equation}
\label{eq:appendix-coordinate-weight-cancellation}
\begin{aligned}
  \frac{d a_{k+1}^2}{2}
  -
  \frac{A_{k+1}}{2dM_k}
  &=
  \frac{\theta A_{k+1}}{2dM_k}
  -
  \frac{A_{k+1}}{2dM_k} \\
  &=
  -\frac{(1-\theta)A_{k+1}}{2dM_k}
  \le0,
\end{aligned}
\end{equation}
where the last inequality in
\eqref{eq:appendix-coordinate-weight-cancellation} uses
\(\theta\in(0,1]\).  It follows that
\(\E[\Phi_{k+1}\mid\mathcal F_k]\le\Phi_k\).  Taking total expectation and
applying the tower property successively for \(k=0,\ldots,N-1\) gives
\begin{equation}
\label{eq:appendix-coordinate-potential}
  \E[\Phi_N\mid z]
  \le
  \Phi_0
  =
  \frac12\norm{z-x^*}^2,
\end{equation}
because \(A_0=0\) and \(v_0=z\).
Indeed, at each iteration the tower property gives
\begin{equation}
\label{eq:appendix-coordinate-tower-step}
  \E[\Phi_{k+1}\mid z]
  =
  \E[\E[\Phi_{k+1}\mid\mathcal F_k]\mid z]
  \le
  \E[\Phi_k\mid z].
\end{equation}
Iterating \eqref{eq:appendix-coordinate-tower-step} yields
\eqref{eq:appendix-coordinate-potential}.

\paragraph{One-phase estimate.}
We now lower bound the accelerated weight \(A_N\) along every realization.
Since \(A_{k+1}-A_k=a_{k+1}>0\), we have \(A_k\le A_{k+1}\), and therefore
\begin{equation}
\label{eq:appendix-coordinate-weight-increment}
\begin{aligned}
  \sqrt{A_{k+1}}-\sqrt{A_k}
  &=
  \frac{a_{k+1}}{\sqrt{A_{k+1}}+\sqrt{A_k}} \\
  &\ge
  \frac{a_{k+1}}{2\sqrt{A_{k+1}}}
  =
  \frac{\sqrt\theta}{2d\sqrt{M_k}}.
\end{aligned}
\end{equation}
The first equality in \eqref{eq:appendix-coordinate-weight-increment} is the
difference-of-squares identity, the inequality uses
\(\sqrt{A_k}\le\sqrt{A_{k+1}}\), and the final equality follows from
\(d^2M_ka_{k+1}^2=\theta A_{k+1}\).  Summing
\eqref{eq:appendix-coordinate-weight-increment} over
\(k=0,\ldots,N-1\) and using \(A_0=0\) yields
\begin{equation}
\label{eq:appendix-coordinate-weight-sum}
  \sqrt{A_N}
  \ge
  \frac{\sqrt\theta}{2d}
  \sum_{k=0}^{N-1}\frac{1}{\sqrt{M_k}}.
\end{equation}
The pathwise monotonicity in
\eqref{eq:appendix-coordinate-monotonicity}, together with \(x_0=z\), gives
\(\F(y_k)\le\F(z)\).  Since
\(M_k=\tau\cstar(H_0+H_1\F(y_k))\) and \(H_1\ge0\), it follows that
\begin{equation}
\label{eq:appendix-coordinate-curvature-upper}
  M_k
  \le
  \tau\cstar\bigl(H_0+H_1\F(z)\bigr).
\end{equation}
Since both sides of \eqref{eq:appendix-coordinate-curvature-upper} are
positive and \(t\mapsto t^{-1/2}\) is decreasing on \((0,\infty)\), taking
reciprocal square roots in \eqref{eq:appendix-coordinate-curvature-upper}
gives
\begin{equation}
\label{eq:appendix-coordinate-reciprocal-curvature}
  \frac{1}{\sqrt{M_k}}
  \ge
  \frac{1}{
    \sqrt{\tau\cstar(H_0+H_1\F(z))}}.
\end{equation}
Substituting \eqref{eq:appendix-coordinate-reciprocal-curvature} into every
summand of \eqref{eq:appendix-coordinate-weight-sum} gives
\begin{equation}
\label{eq:appendix-coordinate-sqrt-weight-lower}
  \sqrt{A_N}
  \ge
  \frac{\sqrt\theta\,N}{
    2d\sqrt{\tau\cstar(H_0+H_1\F(z))}}.
\end{equation}
Both sides of \eqref{eq:appendix-coordinate-sqrt-weight-lower} are
nonnegative, so squaring preserves the inequality and gives
\begin{equation}
\label{eq:appendix-coordinate-weight-lower}
  A_N
  \ge
  \frac{\theta N^2}{
    4d^2\tau\cstar\bigl(H_0+H_1\F(z)\bigr)}.
\end{equation}
Denote the deterministic right-hand side of
\eqref{eq:appendix-coordinate-weight-lower} by \(\underline A_N(z)\).
Equation~\eqref{eq:appendix-coordinate-weight-lower} holds for every
realization of the sampled coordinates.  Since \(\F(x_N)\ge0\),
multiplication preserves the inequality:
\(\underline A_N(z)\F(x_N)\le A_N\F(x_N)\).  Moreover,
\(\Phi_N=A_N\F(x_N)+\frac12\norm{v_N-x^*}^2\ge A_N\F(x_N)\).
Taking conditional expectation therefore gives
\[
\begin{aligned}
  \frac{\theta N^2}{
    4d^2\tau\cstar(H_0+H_1\F(z))}
  \E[\F(x_N)\mid z]
  &\le
  \E[A_N\F(x_N)\mid z] \\
  &\le
  \E[\Phi_N\mid z]
  \le
  \frac12\norm{z-x^*}^2.
\end{aligned}
\]
The last inequality follows from \eqref{eq:appendix-coordinate-potential}.
Rearranging proves the phase estimate
\begin{equation}
\label{eq:appendix-coordinate-phase}
  \E[\F(x_N)\mid z]
  \le
  \frac{2d^2\tau\cstar\norm{z-x^*}^2}{\theta N^2}
  \bigl(H_0+H_1\F(z)\bigr).
\end{equation}

\paragraph{Restart contraction.}
We apply \eqref{eq:appendix-coordinate-phase} to the restart wrapper.  Recall
\[
  \widetilde R
  =
  \sup_{f(x)\le f(x^0)}\norm{x-x^*}<\infty.
\]
Its finiteness follows from
Lemma~\ref{lem:bounded-initial-sublevel}.  The pathwise monotonicity
\eqref{eq:appendix-coordinate-monotonicity} implies
\(f(z_s)\le f(x^0)\) for every realization, so \(z_s\) belongs to the initial
sublevel set and \(\norm{z_s-x^*}\le\widetilde R\).
Applying \eqref{eq:appendix-coordinate-phase} with \(z=z_s\), \(N=N_s\), and
\(x_N=z_{s+1}\), conditionally on the realized phase start \(z_s\), gives
\begin{equation}
\label{eq:appendix-coordinate-conditional-phase}
  \E[\F(z_{s+1})\mid z_s]
  \le
  \frac{2d^2\tau\cstar\widetilde R^2}{\theta N_s^2}
  \bigl(H_0+H_1\F(z_s)\bigr).
\end{equation}
Suppose inductively that \(\E[\F(z_s)]\le\Delta_s\), where
\(\Delta_s=F_0/2^s\).  Taking total expectation in
\eqref{eq:appendix-coordinate-conditional-phase} and using
\(\E[\E[\F(z_{s+1})\mid z_s]]=\E[\F(z_{s+1})]\) gives
\begin{equation}
\label{eq:appendix-coordinate-expected-phase}
\begin{aligned}
  \E[\F(z_{s+1})]
  &\le
  \frac{2d^2\tau\cstar\widetilde R^2}{\theta N_s^2}
  \bigl(H_0+H_1\E[\F(z_s)]\bigr) \\
  &\le
  \frac{2d^2\tau\cstar\widetilde R^2}{\theta N_s^2}
    \bigl(H_0+H_1\Delta_s\bigr).
\end{aligned}
\end{equation}
Here \(N_s\) and all other prefactors are deterministic.  The first inequality
in \eqref{eq:appendix-coordinate-expected-phase} uses the linearity of
expectation, and the second uses
\(\E[\F(z_s)]\le\Delta_s\) together with \(H_1\ge0\).
The phase-length rule gives
\begin{equation}
\label{eq:appendix-coordinate-phase-length}
  N_s
  \ge
  2d\widetilde R\sqrt{\frac{\tau\cstar}{\theta}}
  \sqrt{\frac{H_0}{\Delta_s}+H_1},
\end{equation}
and hence
\begin{equation}
\label{eq:appendix-coordinate-phase-length-squared}
  N_s^2
  \ge
  \frac{4d^2\tau\cstar\widetilde R^2}{\theta}
    \frac{H_0+H_1\Delta_s}{\Delta_s}.
\end{equation}
Equation~\eqref{eq:appendix-coordinate-phase-length-squared} follows by
squaring \eqref{eq:appendix-coordinate-phase-length} and using
\(H_0/\Delta_s+H_1=(H_0+H_1\Delta_s)/\Delta_s\).  Since all factors are
positive, \eqref{eq:appendix-coordinate-phase-length-squared} implies
\begin{equation}
\label{eq:appendix-coordinate-phase-length-reciprocal}
  \frac{1}{N_s^2}
  \le
  \frac{\theta\Delta_s}{
    4d^2\tau\cstar\widetilde R^2(H_0+H_1\Delta_s)}.
\end{equation}
Substituting \eqref{eq:appendix-coordinate-phase-length-reciprocal} into
\eqref{eq:appendix-coordinate-expected-phase} yields
\begin{equation}
\label{eq:appendix-coordinate-contraction}
\begin{aligned}
  \E[\F(z_{s+1})]
  &\le
  \frac{2d^2\tau\cstar\widetilde R^2}{\theta}
  (H_0+H_1\Delta_s)
  \frac{\theta\Delta_s}{
    4d^2\tau\cstar\widetilde R^2(H_0+H_1\Delta_s)} \\
  &=
  \frac{\Delta_s}{2}
  =
  \Delta_{s+1}.
\end{aligned}
\end{equation}
The base case holds because \(z_0=x^0\) and
\(\F(z_0)=F_0=\Delta_0\).  Repeated application of
\eqref{eq:appendix-coordinate-contraction} therefore proves
\(\E[\F(z_s)]\le\Delta_s\) for every phase.  For
\(S=\lceil\log_2(F_0/\varepsilon)\rceil\), we have
\(\Delta_S=F_0/2^S\le\varepsilon\), and consequently
\[
  \E[f(z_S)-f(x^*)]
  =
  \E[\F(z_S)]
  \le
  \Delta_S
  \le
  \varepsilon.
\]

\paragraph{Total complexity.}
It remains to sum the phase lengths.  Denote the quantity inside the ceiling
by
\begin{equation}
\label{eq:appendix-coordinate-Ts}
  T_s
  :=
  2d\widetilde R\sqrt{\frac{\tau\cstar}{\theta}}
  \sqrt{\frac{H_0}{\Delta_s}+H_1},
  \qquad
  N_s=\lceil T_s\rceil.
\end{equation}
We first verify that the ceiling changes the sum only by an absolute factor.
In the nontrivial case \(F_0>0\), convexity and Cauchy--Schwarz imply
\begin{equation}
\label{eq:appendix-coordinate-initial-convexity}
  F_0
  \le
  \inner{\nabla f(x^0)}{x^0-x^*}
  \le
  \norm{\nabla f(x^0)}\,\widetilde R.
\end{equation}
The first inequality in \eqref{eq:appendix-coordinate-initial-convexity} is
the convexity relation
\(f(x^*)\ge f(x^0)+\inner{\nabla f(x^0)}{x^*-x^0}\); the second inequality
in \eqref{eq:appendix-coordinate-initial-convexity} is Cauchy--Schwarz
together with
\(\norm{x^0-x^*}\le\widetilde R\).
Lemma~\ref{lem:radius-curvature-bound}, applied at \(x^0\), gives
\begin{equation}
\label{eq:appendix-coordinate-initial-radius}
  \frac{\norm{\nabla f(x^0)}}{
    \tau\cstar(H_0+H_1F_0)}
  \le
  \frac{r_1}{\tau}
  \le
  r_1.
\end{equation}
By \eqref{eq:appendix-coordinate-initial-radius}, the analytical full-gradient
step
\[
  x^0-
  \frac{\nabla f(x^0)}{
    \tau\cstar(H_0+H_1F_0)}
\]
lies inside the local-model radius.  Since
\(\tau\cstar(H_0+H_1F_0)\ge2(H_0+H_1F_0)\), the same calculation as in
\eqref{eq:appendix-coordinate-descent}, now using the full gradient, gives
\begin{equation}
\label{eq:appendix-coordinate-initial-descent}
  f\left(
    x^0-\frac{\nabla f(x^0)}{
      \tau\cstar(H_0+H_1F_0)}
  \right)
  \le
  f(x^0)-
  \frac{\norm{\nabla f(x^0)}^2}{
    2\tau\cstar(H_0+H_1F_0)}.
\end{equation}
The left-hand side of \eqref{eq:appendix-coordinate-initial-descent} is at
least \(f^*\).  Subtracting \(f^*\) and rearranging therefore yields
\begin{equation}
\label{eq:appendix-coordinate-initial-gradient}
  \norm{\nabla f(x^0)}^2
  \le
  2\tau\cstar(H_0+H_1F_0)F_0.
\end{equation}
Squaring \eqref{eq:appendix-coordinate-initial-convexity} and substituting
\eqref{eq:appendix-coordinate-initial-gradient} gives
\begin{equation}
\label{eq:appendix-coordinate-initial-chain}
  F_0^2
  \le
  \norm{\nabla f(x^0)}^2\widetilde R^2
  \le
  2\tau\cstar(H_0+H_1F_0)F_0\widetilde R^2.
\end{equation}
Dividing \eqref{eq:appendix-coordinate-initial-chain} by \(F_0^2>0\) and
rearranging gives
\begin{equation}
\label{eq:appendix-coordinate-initial-curvature-radius}
  \widetilde R^2
  \left(\frac{H_0}{F_0}+H_1\right)
  \ge
  \frac{1}{2\tau\cstar}.
\end{equation}
Since \(\Delta_0=F_0\), substituting
\eqref{eq:appendix-coordinate-initial-curvature-radius} into
\eqref{eq:appendix-coordinate-Ts} with \(s=0\) gives
\begin{equation}
\label{eq:appendix-coordinate-T0-lower}
\begin{aligned}
  T_0
  &=
  2d\widetilde R\sqrt{\frac{\tau\cstar}{\theta}}
  \sqrt{\frac{H_0}{F_0}+H_1} \\
  &\ge
  2d\sqrt{\frac{\tau\cstar}{\theta}}
  \frac{1}{\sqrt{2\tau\cstar}}
  =
  d\sqrt{\frac{2}{\theta}}
  \ge
  \sqrt2,
\end{aligned}
\end{equation}
where the last inequality in \eqref{eq:appendix-coordinate-T0-lower} uses
\(d\ge1\) and \(\theta\le1\).
Since \(\Delta_s\) is nonincreasing, \(H_0/\Delta_s+H_1\) and hence \(T_s\)
are nondecreasing, so \(T_s\ge T_0\ge1\) for every phase.  Consequently,
\(N_s=\lceil T_s\rceil\le T_s+1\le2T_s\), and
\begin{equation}
\label{eq:appendix-coordinate-complexity-sum}
\begin{aligned}
  N_{\rm tot}^{\rm coord}
  &=
  \sum_{s=0}^{S-1}N_s \\
  &\le
  4d\widetilde R\sqrt{\frac{\tau\cstar}{\theta}}
  \sum_{s=0}^{S-1}
  \sqrt{\frac{H_0}{\Delta_s}+H_1} \\
  &\le
  4d\widetilde R\sqrt{\frac{\tau\cstar}{\theta}}
  \left(
    \sqrt{H_0}\sum_{s=0}^{S-1}\frac{1}{\sqrt{\Delta_s}}
    +
    S\sqrt{H_1}
  \right).
\end{aligned}
\end{equation}
The last inequality in \eqref{eq:appendix-coordinate-complexity-sum} uses
\(\sqrt{a+b}\le\sqrt a+\sqrt b\) for \(a,b\ge0\).  Since
\(\Delta_s=F_0/2^s\),
\begin{equation}
\label{eq:appendix-coordinate-geometric-sum}
\begin{aligned}
  \sum_{s=0}^{S-1}\frac{1}{\sqrt{\Delta_s}}
  &=
  \frac{1}{\sqrt{F_0}}
  \sum_{s=0}^{S-1}2^{s/2} \\
  &=
  \frac{2^{S/2}-1}{(\sqrt2-1)\sqrt{F_0}} \\
  &\le
  \frac{\sqrt2}{\sqrt2-1}\frac{1}{\sqrt\varepsilon}.
\end{aligned}
\end{equation}
The last inequality in \eqref{eq:appendix-coordinate-geometric-sum} follows
from
\(S=\lceil\log_2(F_0/\varepsilon)\rceil\), which implies
\(2^S\le2F_0/\varepsilon\) when \(S>0\); if \(S=0\), the sum is empty.
Moreover, \(S\le\log_2(2F_0/\varepsilon)\), so, with the usual convention that
a logarithmic complexity factor is truncated below by one,
\(S=O(\log(F_0/\varepsilon))\).  Substituting
\eqref{eq:appendix-coordinate-geometric-sum} and
\(S=O(\log(F_0/\varepsilon))\) into
\eqref{eq:appendix-coordinate-complexity-sum} gives
\[
  N_{\rm tot}^{\rm coord}
  =
  O\left(
    d\widetilde R\sqrt{\frac{\tau\cstar H_0}{\theta\varepsilon}}
    +
    d\widetilde R\sqrt{\frac{\tau\cstar H_1}{\theta}}
    \log\frac{F_0}{\varepsilon}
  \right).
\]

\end{proof}

\section{Proof of Theorem~\ref{thm:nonuniform-coordinate-main}}
\label{app:nonuniform-coordinate-proof}

\begin{mainresultbox}
\noindent\textbf{Theorem~\ref{thm:nonuniform-coordinate-main}
(Non-uniform coordinate complexity).}\enspace
{\itshape Suppose Assumptions~\ref{ass:convexity} and
\ref{ass:coordinate-h0h1-smoothness} hold, and let
\(\varepsilon\in(0,F_0]\).  Define
\(\mathcal N(\Delta):=\left\lceil
2\widetilde R\sqrt{\frac{\tau\cstar}{\theta}}
(S_{1/2}^{(0)}/\sqrt\Delta+S_{1/2}^{(1)})\right\rceil\).
Then Algorithm~\ref{alg:restart-meta}, with
Algorithm~\ref{alg:nonuniform-coordinate-inner} as its inner method, returns a
point \(z_S\) satisfying \(\E[f(z_S)-f(x^*)]\le\varepsilon\).  Moreover, the
total number of coordinate-gradient calls, equivalently iterations, satisfies
\[
  N_{\rm tot}^{\rm nonunif}
  =
  O\left(
    \widetilde R\sqrt{\frac{\tau\cstar}{\theta}}
    \left(
      \frac{S_{1/2}^{(0)}}{\sqrt\varepsilon}
      +
      S_{1/2}^{(1)}\log\frac{F_0}{\varepsilon}
    \right)
  \right).
\]
\par}
\end{mainresultbox}

\begin{proof}
The proof has five main steps.  First, the coordinate statement of
Lemma~\ref{lem:radius-curvature-bound} shows that every update of
Algorithm~\ref{alg:nonuniform-coordinate-inner} lies inside the corresponding
radius \(r_{1,i}\).  This justifies the local quadratic model and gives the
coordinate descent and monotonicity estimates
\eqref{eq:appendix-nonuniform-descent} and
\eqref{eq:appendix-nonuniform-monotonicity}.  Second, exact segment
minimization, convexity, and the conditional moments of the importance-sampled
coordinate yield the coupling inequality
\eqref{eq:appendix-nonuniform-coupling} and the potential estimate
\eqref{eq:appendix-nonuniform-potential}.  Third, we control the probability
normalizer \(\mathcal{M}_k\), lower bound the accelerated weight \(A_N\), and derive the
one-phase estimate \eqref{eq:appendix-nonuniform-phase}.  Fourth, Jensen's
inequality and the phase-length rule imply the expected contraction
\eqref{eq:appendix-nonuniform-contraction}.  Finally, induction over the restart
phases gives the desired accuracy, and summing their lengths gives the stated
complexity.

\paragraph{Coordinate descent.}
Consider one run of Algorithm~\ref{alg:nonuniform-coordinate-inner}, initialized
at \(x_0=v_0=z\) and \(A_0=0\).  Throughout the analysis of this run, the phase
start \(z\) is fixed and expectations are taken over the sampled coordinates.
For every coordinate \(i\), Algorithm~\ref{alg:nonuniform-coordinate-inner}
sets
\(M_{k,i}=\tau\cstar(H_{0,i}+H_{1,i}\F(y_k))\).  The coordinate statement of
Lemma~\ref{lem:radius-curvature-bound} therefore gives
\begin{equation}
\label{eq:appendix-nonuniform-radius-bound}
\begin{aligned}
  \frac{\lvert\nabla_i f(y_k)\rvert}{M_{k,i}}
  &=
  \frac{\lvert\nabla_i f(y_k)\rvert}{
    \tau\cstar(H_{0,i}+H_{1,i}\F(y_k))} \\
  &\le
  \frac{r_{1,i}}{\tau}
  \le
  r_{1,i}.
\end{aligned}
\end{equation}
Indeed, Lemma~\ref{lem:radius-curvature-bound} states that
\(\lvert\nabla_i f(y_k)\rvert/r_{1,i}
\le\cstar(H_{0,i}+H_{1,i}\F(y_k))\).  Multiplying this inequality by
\(r_{1,i}\) and dividing by the positive denominator in the first line of
\eqref{eq:appendix-nonuniform-radius-bound} gives the factor
\(r_{1,i}/\tau\); the last inequality uses \(\tau\ge1\).  When
\(H_{1,i}=0\), the same conclusion is automatic under the convention
\(r_{1,i}=\infty\).  If \(\F(y_k)=0\), then \(y_k\) is already optimal and
the inner run may terminate.  On every nonterminal iteration
\(\F(y_k)>0\); since \(H_{1,i}>0\), all \(M_{k,i}\), \(\mathcal{M}_k\), and
\(p_{k,i}\) are then positive.  The other part of
Lemma~\ref{lem:radius-curvature-bound}, together with \(\tau\ge1\), also gives
\begin{equation}
\label{eq:appendix-nonuniform-curvature-lower}
  M_{k,i}
  \ge
  2\bigl(H_{0,i}+H_{1,i}\F(y_k)\bigr).
\end{equation}

After coordinate \(i_k\) is sampled, subtracting \(y_k\) from the primal
update gives
\begin{equation}
\label{eq:appendix-nonuniform-step-radius}
\begin{aligned}
  \norm{x_{k+1}-y_k}
  &=
  \left\|
    -\frac{\nabla_{i_k}f(y_k)}{M_{k,i_k}}e_{i_k}
  \right\| \\
  &=
  \frac{\lvert\nabla_{i_k}f(y_k)\rvert}{M_{k,i_k}}
  \le
  r_{1,i_k}.
\end{aligned}
\end{equation}
The second equality in \eqref{eq:appendix-nonuniform-step-radius} uses
\(\norm{e_{i_k}}=1\), and its last inequality is
\eqref{eq:appendix-nonuniform-radius-bound} with \(i=i_k\).  Hence the
coordinate local model in
Assumption~\ref{ass:coordinate-h0h1-smoothness} is valid with base point
\(y_k\) and scalar displacement
\(-\nabla_{i_k}f(y_k)/M_{k,i_k}\).  Substituting this displacement into that
model yields
\begin{equation}
\label{eq:appendix-nonuniform-local-model}
\begin{aligned}
  f(x_{k+1})
  &\le
  f(y_k)
  -\frac{(\nabla_{i_k}f(y_k))^2}{M_{k,i_k}}
  +\frac{H_{0,i_k}+H_{1,i_k}\F(y_k)}{M_{k,i_k}^2}
    (\nabla_{i_k}f(y_k))^2.
\end{aligned}
\end{equation}
The linear term in the local model equals
\(-M_{k,i_k}^{-1}(\nabla_{i_k}f(y_k))^2\), while the squared displacement
equals \(M_{k,i_k}^{-2}(\nabla_{i_k}f(y_k))^2\), which explains the two terms
on the right-hand side of \eqref{eq:appendix-nonuniform-local-model}.  By
\eqref{eq:appendix-nonuniform-curvature-lower},
\[
  \frac{H_{0,i_k}+H_{1,i_k}\F(y_k)}{M_{k,i_k}^2}
  \le
  \frac{1}{2M_{k,i_k}}.
\]
Substituting this bound into
\eqref{eq:appendix-nonuniform-local-model} and subtracting \(f^*\) from both
sides gives
\begin{equation}
\label{eq:appendix-nonuniform-descent}
  \F(x_{k+1})
  \le
  \F(y_k)
  -\frac{(\nabla_{i_k}f(y_k))^2}{2M_{k,i_k}}.
\end{equation}
The segment defining \(y_k\) contains \(x_k\), because \(\beta=1\) is
feasible and gives \(v_k+\beta(x_k-v_k)=x_k\).  Consequently,
\(f(y_k)\le f(x_k)\).  Equation~\eqref{eq:appendix-nonuniform-descent}
also gives \(f(x_{k+1})\le f(y_k)\), and therefore, for every realization of
\(i_k\),
\begin{equation}
\label{eq:appendix-nonuniform-monotonicity}
  f(x_{k+1})
  \le
  f(y_k)
  \le
  f(x_k).
\end{equation}

\paragraph{Importance-sampled potential.}
Let \(\mathcal F_k\) be the \(\sigma\)-algebra generated by the coordinates
sampled before iteration \(k\).  The points \(x_k,v_k,y_k\), all curvatures
\(M_{k,i}\), the probabilities \(p_{k,i}\), and the weights
\(a_{k+1},A_{k+1}\) are determined before \(i_k\) is sampled and are therefore
\(\mathcal F_k\)-measurable.  Define
\[
  w_k
  :=
  \frac{A_kx_k+a_{k+1}v_k}{A_{k+1}}.
\]
Since \(A_{k+1}=A_k+a_{k+1}\), the coefficients defining \(w_k\) are
nonnegative and sum to one.  Thus \(w_k\) lies on the segment minimized in the
definition of \(y_k\).  The first-order optimality condition for this convex
one-dimensional problem gives
\begin{equation}
\label{eq:appendix-nonuniform-segment-optimality}
  \inner{\nabla f(y_k)}{w_k-y_k}
  \ge
  0.
\end{equation}
Convexity of \(f\), applied at \(y_k\) with comparison points \(x_k\) and
\(x^*\), gives
\begin{equation}
\label{eq:appendix-nonuniform-convexity}
\begin{aligned}
  f(y_k)-f(x_k)
  &\le
  \inner{\nabla f(y_k)}{y_k-x_k}, \\
  \F(y_k)
  &\le
  \inner{\nabla f(y_k)}{y_k-x^*}.
\end{aligned}
\end{equation}
The first inequality is obtained by rearranging the convexity inequality at
\(x_k\), and the second by rearranging the same inequality at the minimizer
\(x^*\).  Multiplying the two inequalities in
\eqref{eq:appendix-nonuniform-convexity} by \(A_k\) and \(a_{k+1}\),
respectively, and adding them yields
\begin{equation}
\label{eq:appendix-nonuniform-weighted-convexity}
\begin{aligned}
  A_{k+1}\F(y_k)-A_k\F(x_k)
  &=
  A_k\bigl(f(y_k)-f(x_k)\bigr)+a_{k+1}\F(y_k) \\
  &\le
  \inner{\nabla f(y_k)}{
    A_k(y_k-x_k)+a_{k+1}(y_k-x^*)
  }.
\end{aligned}
\end{equation}
The equality in \eqref{eq:appendix-nonuniform-weighted-convexity} follows from
\(A_{k+1}=A_k+a_{k+1}\) and
\(\F(y_k)-\F(x_k)=f(y_k)-f(x_k)\).  The vector in its final inner product
satisfies
\begin{equation}
\label{eq:appendix-nonuniform-vector-identity}
  A_k(y_k-x_k)+a_{k+1}(y_k-x^*)
  =
  a_{k+1}(v_k-x^*)+A_{k+1}(y_k-w_k).
\end{equation}
Indeed, substituting the definition of \(w_k\) into the right-hand side of
\eqref{eq:appendix-nonuniform-vector-identity} gives
\[
\begin{aligned}
  &a_{k+1}(v_k-x^*)
  +A_{k+1}y_k-A_kx_k-a_{k+1}v_k \\
  &\qquad=
  A_{k+1}y_k-A_kx_k-a_{k+1}x^*
  =
  A_k(y_k-x_k)+a_{k+1}(y_k-x^*).
\end{aligned}
\]
Substituting \eqref{eq:appendix-nonuniform-vector-identity} into
\eqref{eq:appendix-nonuniform-weighted-convexity}, and then using
\eqref{eq:appendix-nonuniform-segment-optimality}, gives the coupling
inequality
\begin{equation}
\label{eq:appendix-nonuniform-coupling}
\begin{aligned}
  A_{k+1}\F(y_k)-A_k\F(x_k)
  &\le
  a_{k+1}\inner{\nabla f(y_k)}{v_k-x^*}
  {}+
  A_{k+1}\inner{\nabla f(y_k)}{y_k-w_k} \\
  &\le
  a_{k+1}\inner{\nabla f(y_k)}{v_k-x^*}.
\end{aligned}
\end{equation}
The second inequality follows because
\eqref{eq:appendix-nonuniform-segment-optimality} is equivalent to
\(\inner{\nabla f(y_k)}{y_k-w_k}\le0\), and \(A_{k+1}>0\).

Conditionally on \(\mathcal F_k\), coordinate \(i\) is sampled with
probability \(p_{k,i}\).  Therefore,
\begin{equation}
\label{eq:appendix-nonuniform-moments}
\begin{aligned}
  \E\!\left[
    p_{k,i_k}^{-1}\nabla_{i_k}f(y_k)e_{i_k}
    \,\middle|\,\mathcal F_k
  \right]
  &=
  \sum_{i=1}^d p_{k,i}p_{k,i}^{-1}\nabla_i f(y_k)e_i
  =
  \nabla f(y_k), \\
  \E\!\left[
    \frac{(\nabla_{i_k}f(y_k))^2}{M_{k,i_k}}
    \,\middle|\,\mathcal F_k
  \right]
  &=
  \sum_{i=1}^d
  \frac{p_{k,i}}{M_{k,i}}(\nabla_i f(y_k))^2, \\
  \E\!\left[
    p_{k,i_k}^{-2}(\nabla_{i_k}f(y_k))^2
    \,\middle|\,\mathcal F_k
  \right]
  &=
  \sum_{i=1}^d
  \frac{(\nabla_i f(y_k))^2}{p_{k,i}}.
\end{aligned}
\end{equation}
In the first identity, the factor \(p_{k,i}^{-1}\) compensates for the
sampling probability \(p_{k,i}\), so the sampled vector is an unbiased
estimator of the full gradient.  In the last identity, one factor
\(p_{k,i}\) from the expectation combines with \(p_{k,i}^{-2}\), leaving
\(p_{k,i}^{-1}\).

Multiplying \eqref{eq:appendix-nonuniform-descent} by the nonnegative,
\(\mathcal F_k\)-measurable weight \(A_{k+1}\), taking conditional
expectation, and using the second identity in
\eqref{eq:appendix-nonuniform-moments} gives
\begin{equation}
\label{eq:appendix-nonuniform-expected-descent}
\begin{aligned}
  \E\!\left[
    A_{k+1}\F(x_{k+1})
    \,\middle|\,\mathcal F_k
  \right]
  \le{}&
  A_{k+1}\F(y_k) \\
  &-
  \frac{A_{k+1}}{2}
  \sum_{i=1}^d
  \frac{p_{k,i}}{M_{k,i}}(\nabla_i f(y_k))^2.
\end{aligned}
\end{equation}
Expanding the squared distance after the auxiliary update gives, before taking
expectation,
\[
\begin{aligned}
  \frac12\norm{v_{k+1}-x^*}^2
  ={}&
  \frac12\norm{v_k-x^*}^2
  -a_{k+1}p_{k,i_k}^{-1}\nabla_{i_k}f(y_k)
    \inner{e_{i_k}}{v_k-x^*} \\
  &+
  \frac{a_{k+1}^2}{2}p_{k,i_k}^{-2}
    (\nabla_{i_k}f(y_k))^2.
\end{aligned}
\]
The first and third identities in
\eqref{eq:appendix-nonuniform-moments} therefore imply
\begin{equation}
\label{eq:appendix-nonuniform-expected-distance}
\begin{aligned}
  \E\!\left[
    \frac12\norm{v_{k+1}-x^*}^2
    \,\middle|\,\mathcal F_k
  \right]
  ={}&
  \frac12\norm{v_k-x^*}^2
  -a_{k+1}\inner{\nabla f(y_k)}{v_k-x^*} \\
  &+
  \frac{a_{k+1}^2}{2}
  \sum_{i=1}^d
  \frac{(\nabla_i f(y_k))^2}{p_{k,i}}.
\end{aligned}
\end{equation}
Define the nonnegative potential
\[
  \Phi_k
  :=
  A_k\F(x_k)+\frac12\norm{v_k-x^*}^2.
\]
Adding \eqref{eq:appendix-nonuniform-expected-descent} and
\eqref{eq:appendix-nonuniform-expected-distance}, and then rearranging
\eqref{eq:appendix-nonuniform-coupling} as
\[
  A_{k+1}\F(y_k)
  -a_{k+1}\inner{\nabla f(y_k)}{v_k-x^*}
  \le
  A_k\F(x_k),
\]
gives
\begin{equation}
\label{eq:appendix-nonuniform-potential-step}
  \E[\Phi_{k+1}\mid\mathcal F_k]
  \le
  \Phi_k
  +\frac12\sum_{i=1}^d
  \left(
    \frac{a_{k+1}^2}{p_{k,i}}
    -\frac{A_{k+1}p_{k,i}}{M_{k,i}}
  \right)(\nabla_i f(y_k))^2.
\end{equation}
Recall that \(\mathcal{M}_k:=\sum_{j=1}^d\sqrt{M_{k,j}}\), so
\(p_{k,i}=\sqrt{M_{k,i}}/\mathcal{M}_k\).  Together with
\(\mathcal{M}_k^2a_{k+1}^2=\theta A_{k+1}\), this implies that every coefficient in
\eqref{eq:appendix-nonuniform-potential-step} satisfies
\begin{equation}
\label{eq:appendix-nonuniform-weight-balance}
\begin{aligned}
  \frac{a_{k+1}^2}{p_{k,i}}
  -\frac{A_{k+1}p_{k,i}}{M_{k,i}}
  &=
  \frac{\mathcal{M}_k^2a_{k+1}^2-A_{k+1}}
       {\mathcal{M}_k\sqrt{M_{k,i}}} \\
  &=
  -\frac{(1-\theta)A_{k+1}}
         {\mathcal{M}_k\sqrt{M_{k,i}}}
  \le
  0.
\end{aligned}
\end{equation}
The first equality follows by substituting
\(p_{k,i}=\sqrt{M_{k,i}}/\mathcal{M}_k\) and placing the two fractions over the common
denominator \(\mathcal{M}_k\sqrt{M_{k,i}}\); the second uses the accelerated weight
equation, and the final inequality uses \(\theta\in(0,1]\).  Substituting
\eqref{eq:appendix-nonuniform-weight-balance} into
\eqref{eq:appendix-nonuniform-potential-step} yields
\(\E[\Phi_{k+1}\mid\mathcal F_k]\le\Phi_k\).  The tower property then gives,
for each \(k\),
\[
  \E[\Phi_{k+1}\mid z]
  =
  \E[\E[\Phi_{k+1}\mid\mathcal F_k]\mid z]
  \le
  \E[\Phi_k\mid z].
\]
Applying this inequality successively and using \(A_0=0\) and \(v_0=z\)
gives
\begin{equation}
\label{eq:appendix-nonuniform-potential}
  \E[\Phi_N\mid z]
  \le
  \Phi_0
  =
  \frac12\norm{z-x^*}^2.
\end{equation}

\paragraph{One-phase estimate.}
We next lower bound \(A_N\) along every realization.  Since
\(A_{k+1}-A_k=a_{k+1}>0\), we have \(A_k\le A_{k+1}\), and hence
\begin{equation}
\label{eq:appendix-nonuniform-weight-increment}
\begin{aligned}
  \sqrt{A_{k+1}}-\sqrt{A_k}
  &=
  \frac{a_{k+1}}{\sqrt{A_{k+1}}+\sqrt{A_k}} \\
  &\ge
  \frac{a_{k+1}}{2\sqrt{A_{k+1}}}
  =
  \frac{\sqrt\theta}{2\mathcal{M}_k}.
\end{aligned}
\end{equation}
The first equality in \eqref{eq:appendix-nonuniform-weight-increment} is the
difference-of-squares identity, the inequality uses
\(\sqrt{A_k}\le\sqrt{A_{k+1}}\), and the final equality follows from
\(\mathcal{M}_k^2a_{k+1}^2=\theta A_{k+1}\).

The pathwise monotonicity in
\eqref{eq:appendix-nonuniform-monotonicity}, together with \(x_0=z\), implies
\(\F(y_k)\le\F(z)\).  Recall that
\(S_{1/2}^{(0)}:=\sum_{i=1}^d\sqrt{H_{0,i}}\) and
\(S_{1/2}^{(1)}:=\sum_{i=1}^d\sqrt{H_{1,i}}\); unlike
\(\mathcal{M}_k\), both quantities are fixed across iterations.  Therefore
\begin{equation}
\label{eq:appendix-nonuniform-normalizer-upper}
\begin{aligned}
  \mathcal{M}_k
  &=
  \sqrt{\tau\cstar}
  \sum_{i=1}^d
  \sqrt{H_{0,i}+H_{1,i}\F(y_k)} \\
  &\le
  \sqrt{\tau\cstar}
  \left(S_{1/2}^{(0)}+S_{1/2}^{(1)}\sqrt{\F(z)}\right).
\end{aligned}
\end{equation}
The inequality in \eqref{eq:appendix-nonuniform-normalizer-upper} uses
\(\sqrt{a+b}\le\sqrt a+\sqrt b\) for every coordinate and then uses
\(\F(y_k)\le\F(z)\).  Since all quantities are positive, taking reciprocals in
\eqref{eq:appendix-nonuniform-normalizer-upper} reverses the inequality.
Substituting the resulting lower bound on \(1/\mathcal{M}_k\) into
\eqref{eq:appendix-nonuniform-weight-increment}, summing over
\(k=0,\ldots,N-1\), and using \(A_0=0\), gives
\begin{equation}
\label{eq:appendix-nonuniform-sqrt-weight-lower}
  \sqrt{A_N}
  \ge
  \frac{\sqrt\theta\,N}{
    2\sqrt{\tau\cstar}
    \left(S_{1/2}^{(0)}+S_{1/2}^{(1)}\sqrt{\F(z)}\right)}.
\end{equation}
Both sides of \eqref{eq:appendix-nonuniform-sqrt-weight-lower} are
nonnegative, so squaring preserves the inequality and yields
\begin{equation}
\label{eq:appendix-nonuniform-weight-lower}
  A_N
  \ge
  \frac{\theta N^2}
  {4\tau\cstar
  \left(S_{1/2}^{(0)}+S_{1/2}^{(1)}\sqrt{\F(z)}\right)^2}.
\end{equation}
The right-hand side of \eqref{eq:appendix-nonuniform-weight-lower} depends on
the fixed phase start \(z\), but not on the coordinates sampled during the
phase.  Since \(\F(x_N)\ge0\), multiplying
\eqref{eq:appendix-nonuniform-weight-lower} by \(\F(x_N)\) preserves the
inequality.  Moreover,
\(\Phi_N\ge A_N\F(x_N)\).  Taking conditional expectation and applying
\eqref{eq:appendix-nonuniform-potential} therefore gives
\[
\begin{aligned}
  &\frac{\theta N^2}{
    4\tau\cstar
    \left(S_{1/2}^{(0)}+S_{1/2}^{(1)}\sqrt{\F(z)}\right)^2}
    \E[\F(x_N)\mid z] \\
  &\qquad\le
  \E[A_N\F(x_N)\mid z]
  \le
  \E[\Phi_N\mid z]
  \le
  \frac12\norm{z-x^*}^2.
\end{aligned}
\]
Rearranging this chain gives the one-phase estimate
\begin{equation}
\label{eq:appendix-nonuniform-phase}
  \E[\F(x_N)\mid z]
  \le
  \frac{2\tau\cstar\norm{z-x^*}^2}{\theta N^2}
  \left(S_{1/2}^{(0)}+S_{1/2}^{(1)}\sqrt{\F(z)}\right)^2.
\end{equation}

\paragraph{Restart contraction.}
We now apply \eqref{eq:appendix-nonuniform-phase} to the restart wrapper.  By
\eqref{eq:appendix-nonuniform-monotonicity}, every restart point satisfies
\(f(z_s)\le f(x^0)\) for every realization.  Hence
Lemma~\ref{lem:bounded-initial-sublevel} and
\eqref{eq:sublevel-radius} imply
\begin{equation}
\label{eq:appendix-nonuniform-restart-radius}
  \norm{z_s-x^*}
  \le
  \widetilde R.
\end{equation}
Applying \eqref{eq:appendix-nonuniform-phase} with \(z=z_s\), \(N=N_s\), and
\(x_N=z_{s+1}\), conditionally on \(z_s\), and then using
\eqref{eq:appendix-nonuniform-restart-radius}, gives
\begin{equation}
\label{eq:appendix-nonuniform-conditional-phase}
  \E[\F(z_{s+1})\mid z_s]
  \le
  \frac{2\tau\cstar\widetilde R^2}{\theta N_s^2}
  \left(
    S_{1/2}^{(0)}+S_{1/2}^{(1)}\sqrt{\F(z_s)}
  \right)^2.
\end{equation}
Suppose inductively that \(\E\F(z_s)\le\Delta_s\).  Expanding the square,
using Jensen's inequality
\(\E\sqrt{\F(z_s)}\le\sqrt{\E\F(z_s)}\), and then applying the induction
hypothesis gives
\begin{equation}
\label{eq:appendix-nonuniform-jensen}
\begin{aligned}
  &\E\left[
    \left(S_{1/2}^{(0)}+S_{1/2}^{(1)}\sqrt{\F(z_s)}\right)^2
  \right] \\
  &\quad=
  \left(S_{1/2}^{(0)}\right)^2
  +2S_{1/2}^{(0)}S_{1/2}^{(1)}
    \E\sqrt{\F(z_s)}
  +\left(S_{1/2}^{(1)}\right)^2\E\F(z_s) \\
  &\quad\le
  \left(S_{1/2}^{(0)}\right)^2
  +2S_{1/2}^{(0)}S_{1/2}^{(1)}\sqrt{\Delta_s}
  +\left(S_{1/2}^{(1)}\right)^2\Delta_s \\
  &\quad=
  \left(S_{1/2}^{(0)}+S_{1/2}^{(1)}\sqrt{\Delta_s}\right)^2.
\end{aligned}
\end{equation}
Taking total expectation in
\eqref{eq:appendix-nonuniform-conditional-phase}, using the tower property, and
then substituting \eqref{eq:appendix-nonuniform-jensen} yields
\begin{equation}
\label{eq:appendix-nonuniform-restart-bound}
  \E\F(z_{s+1})
  \le
  \frac{2\tau\cstar\widetilde R^2}{\theta N_s^2}
  \left(
    S_{1/2}^{(0)}+S_{1/2}^{(1)}\sqrt{\Delta_s}
  \right)^2.
\end{equation}
By the definition of \(N_s=\mathcal N(\Delta_s)\),
\begin{equation}
\label{eq:appendix-nonuniform-phase-length}
  N_s
  \ge
  2\widetilde R\sqrt{\frac{\tau\cstar}{\theta}}
  \left(
    \frac{S_{1/2}^{(0)}}{\sqrt{\Delta_s}}+S_{1/2}^{(1)}
  \right).
\end{equation}
Squaring \eqref{eq:appendix-nonuniform-phase-length} gives
\begin{equation}
\label{eq:appendix-nonuniform-phase-length-squared}
  N_s^2
  \ge
  \frac{4\tau\cstar\widetilde R^2}{\theta\Delta_s}
  \left(
    S_{1/2}^{(0)}+S_{1/2}^{(1)}\sqrt{\Delta_s}
  \right)^2,
\end{equation}
where we used
\((S_{1/2}^{(0)}/\sqrt{\Delta_s}+S_{1/2}^{(1)})^2
= (S_{1/2}^{(0)}+S_{1/2}^{(1)}\sqrt{\Delta_s})^2/\Delta_s\).
Substituting the reciprocal of
\eqref{eq:appendix-nonuniform-phase-length-squared} into
\eqref{eq:appendix-nonuniform-restart-bound} gives
\begin{equation}
\label{eq:appendix-nonuniform-contraction}
\begin{aligned}
  \E\F(z_{s+1})
  &\le
  \frac{2\tau\cstar\widetilde R^2}{\theta}
  \left(S_{1/2}^{(0)}+S_{1/2}^{(1)}\sqrt{\Delta_s}\right)^2 \\
  &\qquad\times
  \frac{\theta\Delta_s}{
    4\tau\cstar\widetilde R^2
    \left(S_{1/2}^{(0)}+S_{1/2}^{(1)}\sqrt{\Delta_s}\right)^2} \\
  &=
  \frac{\Delta_s}{2}
  =
  \Delta_{s+1}.
\end{aligned}
\end{equation}
The induction starts from the deterministic identity
\(\F(z_0)=F_0=\Delta_0\).  Equation~\eqref{eq:appendix-nonuniform-contraction}
then shows that \(\E\F(z_s)\le\Delta_s\) implies
\(\E\F(z_{s+1})\le\Delta_{s+1}\), so the invariant holds for every phase.

\paragraph{Total complexity.}
Since \(S=\lceil\log_2(F_0/\varepsilon)\rceil\), we have
\(\Delta_S=F_0/2^S\le\varepsilon\).  The restart invariant therefore gives
\(\E[f(z_S)-f(x^*)]=\E\F(z_S)\le\varepsilon\).

It remains to account for the ceiling in the phase-length rule.  If \(S=0\),
the complexity statement is immediate, so suppose \(S\ge1\).  We first show
that the quantity inside the ceiling is at least one.  Applying the coordinate
descent argument leading to \eqref{eq:appendix-nonuniform-descent} at \(x^0\),
separately for every coordinate \(i\), and using \(f^*\) as a lower bound on
the resulting objective value gives
\begin{equation}
\label{eq:appendix-nonuniform-initial-gradient}
  (\nabla_i f(x^0))^2
  \le
  2\tau\cstar
  \bigl(H_{0,i}+H_{1,i}F_0\bigr)F_0.
\end{equation}
Indeed, the trial point
\(x^0-[\tau\cstar(H_{0,i}+H_{1,i}F_0)]^{-1}
\nabla_i f(x^0)e_i\) satisfies the corresponding radius condition by
Lemma~\ref{lem:radius-curvature-bound}.  The coordinate local model and
\(\tau\cstar(H_{0,i}+H_{1,i}F_0)
\ge2(H_{0,i}+H_{1,i}F_0)\) therefore give
\[
\begin{aligned}
  &f\left(
    x^0-
    \frac{\nabla_i f(x^0)}{
      \tau\cstar(H_{0,i}+H_{1,i}F_0)}e_i
  \right) \\
  &\qquad\le
  f(x^0)-
  \frac{(\nabla_i f(x^0))^2}{
    2\tau\cstar(H_{0,i}+H_{1,i}F_0)}.
\end{aligned}
\]
The left-hand side is at least \(f^*\).  Subtracting \(f^*\), multiplying by
the positive denominator, and rearranging gives
\eqref{eq:appendix-nonuniform-initial-gradient}.
Summing \eqref{eq:appendix-nonuniform-initial-gradient} over the coordinates
and taking square roots gives
\begin{equation}
\label{eq:appendix-nonuniform-full-gradient-bound}
\begin{aligned}
  \norm{\nabla f(x^0)}
  &\le
  \sqrt{2\tau\cstar F_0}
  \sqrt{
    \sum_{i=1}^d H_{0,i}
    +F_0\sum_{i=1}^d H_{1,i}
  } \\
  &\le
  \sqrt{2\tau\cstar F_0}
  \left(
    S_{1/2}^{(0)}+S_{1/2}^{(1)}\sqrt{F_0}
  \right).
\end{aligned}
\end{equation}
The second inequality uses \(\sqrt{a+b}\le\sqrt a+\sqrt b\) and
\(\sqrt{\sum_i H_{j,i}}\le\sum_i\sqrt{H_{j,i}}=S_{1/2}^{(j)}\).
Convexity at \(x^0\), followed by Cauchy--Schwarz and
\(\norm{x^0-x^*}\le\widetilde R\), gives
\[
  F_0
  \le
  \inner{\nabla f(x^0)}{x^0-x^*}
  \le
  \widetilde R\norm{\nabla f(x^0)}.
\]
Combining this inequality with
\eqref{eq:appendix-nonuniform-full-gradient-bound} and dividing by
\(F_0>0\) yields
\begin{equation}
\label{eq:appendix-nonuniform-ceiling-lower}
  \widetilde R
  \left(
    \frac{S_{1/2}^{(0)}}{\sqrt{F_0}}+S_{1/2}^{(1)}
  \right)
  \ge
  \frac{1}{\sqrt{2\tau\cstar}}.
\end{equation}

For brevity in the final summation only, let
\[
  T_s
  :=
  2\widetilde R\sqrt{\frac{\tau\cstar}{\theta}}
  \left(
    \frac{S_{1/2}^{(0)}}{\sqrt{\Delta_s}}+S_{1/2}^{(1)}
  \right).
\]
Since \(\Delta_s\le F_0\), equations
\eqref{eq:appendix-nonuniform-ceiling-lower} and \(\theta\le1\) imply
\(T_s\ge T_0\ge\sqrt{2/\theta}\ge1\).  Hence
\(N_s=\lceil T_s\rceil\le T_s+1\le2T_s\), and
\begin{equation}
\label{eq:appendix-nonuniform-total-sum}
\begin{aligned}
  N_{\rm tot}^{\rm nonunif}
  &=
  \sum_{s=0}^{S-1}N_s \\
  &\le
  4\widetilde R\sqrt{\frac{\tau\cstar}{\theta}}
  \left(
    S_{1/2}^{(0)}\sum_{s=0}^{S-1}\frac{1}{\sqrt{\Delta_s}}
    +S S_{1/2}^{(1)}
  \right).
\end{aligned}
\end{equation}
Since \(\Delta_s=F_0/2^s\), the first sum in
\eqref{eq:appendix-nonuniform-total-sum} is geometric:
\begin{equation}
\label{eq:appendix-nonuniform-geometric-sum}
\begin{aligned}
  \sum_{s=0}^{S-1}\frac{1}{\sqrt{\Delta_s}}
  &=
  \frac{1}{\sqrt{F_0}}
  \sum_{s=0}^{S-1}2^{s/2} \\
  &=
  \frac{2^{S/2}-1}{(\sqrt2-1)\sqrt{F_0}} \\
  &\le
  \frac{\sqrt2}{\sqrt2-1}\frac{1}{\sqrt\varepsilon}.
\end{aligned}
\end{equation}
The last inequality in \eqref{eq:appendix-nonuniform-geometric-sum} uses
\(2^S\le2F_0/\varepsilon\) when \(S>0\); when \(S=0\), the sum is empty.
Moreover, \(S\le\log_2(2F_0/\varepsilon)\), so, with the usual convention that
a logarithmic complexity factor is truncated below by one,
\(S=O(\log(F_0/\varepsilon))\).  Substituting these bounds into
\eqref{eq:appendix-nonuniform-total-sum} gives
\[
  N_{\rm tot}^{\rm nonunif}
  =
  O\left(
    \widetilde R\sqrt{\frac{\tau\cstar}{\theta}}
    \left(
      \frac{S_{1/2}^{(0)}}{\sqrt\varepsilon}
      +S_{1/2}^{(1)}\log\frac{F_0}{\varepsilon}
    \right)
  \right).
\]
\end{proof}

\section{Additional Experiments}
\label{app:additional-experiments}

This section complements the controlled experiment in
Section~\ref{sec:experiments}.  We first compare the full-gradient methods on a
large-scale finite-sum classification problem and then isolate the benefit of
non-uniform coordinate sampling on an anisotropic objective.

\subsection{Full-gradient finite-sum classification}
\label{app:additional-full-gradient-experiments}

We evaluate whether the accelerated behavior observed on the synthetic
chain-cosh objective persists on a large-scale learning problem, while also
accounting for the extra oracle work incurred by segment relaxation.

\paragraph{Problem setup.}
We use all \(n=581{,}012\) examples and \(54\) features of the scaled
\texttt{covtype.binary} data from LIBSVM \citep{chang2011libsvm}.  We append an
intercept and rescale every row by the same factor so that
\(\max_j\norm{a_j}=1\).  With signed margin \(m\), we consider the convex
\(C^2\) loss
\[
  \ell(m)
  :=
  \begin{cases}
    e^{-m}, & m\ge0, \\
    1-m+m^2/2, & m<0,
  \end{cases}
\]
and solve the regularized finite-sum problem
\begin{equation}
\label{eq:covtype-objective}
  \min_{x\in\mathbb R^{55}}
  f(x)
  :=
  \frac1n\sum_{j=1}^n\ell(y_ja_j^\top x)
  +\frac{\lambda}{2}\norm{x}^2,
  \qquad
  \lambda:=10^{-2}.
\end{equation}
Since \(\ell''(m)\le\ell(m)\) and \(\norm{a_j}\le1\),
\(\norm{\nabla^2f(x)}\le\lambda+n^{-1}\sum_j\ell(y_ja_j^\top x)
\le\lambda+f(x)\).  Hence \eqref{eq:covtype-objective} is
\((H_0,H_1)\)-smooth with \(H_0=\lambda+f^*\) and \(H_1=1\).  We compute the
reference value \(f^*\) using high-accuracy L-BFGS \citep{liu1989limited}; the
final gradient norm is \(7.1\cdot10^{-8}\).

\paragraph{Methods and metrics.}
We compare four full-gradient methods.  \emph{Ours} is the restarted method in
Algorithms~\ref{alg:restart-meta}--\ref{alg:full-gradient-inner} and adapts its
curvature to the current gap while enforcing the local-model radius.
\emph{GD-Warmup} is the non-accelerated method of
\citet[Algorithm~2, Appendix~C.4]{lobanov2026clipped}.  \emph{Frozen-FGM}
is the classical two-sequence accelerated method of \citet{nesterov1983method}
with the fixed curvature \(M_{\rm frozen}:=H_0+H_1F_0\); unlike Ours, it has
neither gap adaptation nor restarts.  \emph{AGMsDR} is Algorithm~1 of
\citet{vankov2024l0l1}.  It shares segment relaxation and accelerated coupling
with Ours, but uses the \((L_0,L_1)\)-smooth step and determines its effective
curvature from the achieved decrease.  Since the continuation in
\eqref{eq:covtype-objective} is globally smooth, we run AGMsDR with
\((L_0,L_1)=(L_{\rm global},0)\), where
\(L_{\rm global}:=\sup_x\norm{\nabla^2f(x)}\).  The construction satisfies
\(L_{\rm global}\le M_{\rm frozen}\), so the two classical-smoothness
baselines are run under their respective assumptions.  All methods start from
the same point and run until \(\F(x)/F_0\le10^{-4}\).
Figure~\ref{fig:covtype} reports outer
iterations, total first-order oracle calls, and wall-clock time.  For Ours and
AGMsDR, the oracle count includes update and segment-relaxation gradients.
Runtime is the median and interquartile range over ten runs after one CUDA
warm-up.

\begin{figure}[H]
  \centering
  \includegraphics[width=\textwidth]{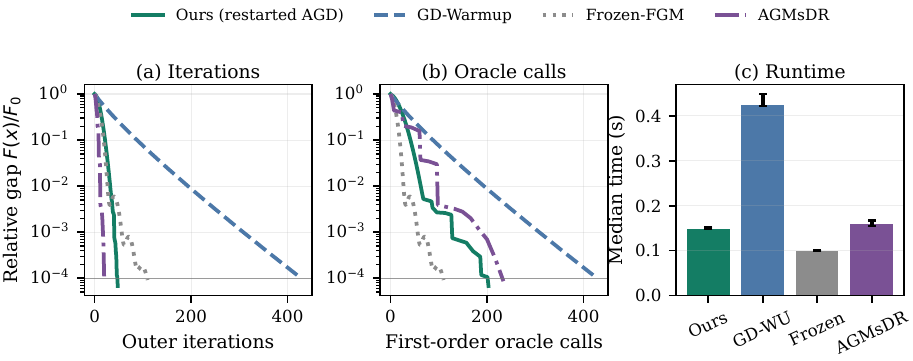}
  \caption{\small Full-gradient methods on the capped-exponential
  \texttt{covtype.binary} objective: relative gap versus (a) outer iterations
  and (b) total first-order oracle calls, and (c) runtime.  The horizontal line
  marks the desired relative gap \(10^{-4}\); runtime bars show medians and
  interquartile ranges over ten runs on an NVIDIA A100 80GB GPU.}
  \label{fig:covtype}
\end{figure}

\paragraph{Discussion.}
The initial portions of the convergence curves in
Figures~\ref{fig:covtype}(a)--(b) are approximately linear on the semilogarithmic
scale.  This behavior is consistent with the gap-dominated regime
\(H_1\F(x)\gg H_0\): both the non-accelerated analysis of GD-Warmup and our
accelerated analysis contain a logarithmic dependence on the desired accuracy,
corresponding to geometric gap reduction across phases.  The distinction is in
the dependence on the curvature contribution: our accelerated bound replaces
the linear \(H_1\widetilde R^2\) factor by its square root.  As the iterates
approach the optimum and the baseline term \(H_0\) becomes dominant, a globally
linear trajectory is no longer predicted.

Our method reaches the desired gap in \(48\) outer iterations, compared with
\(110\) for Frozen-FGM and \(429\) for GD-Warmup.  AGMsDR requires only \(20\)
outer iterations.  Because both accelerated relaxation methods perform
additional one-dimensional searches, outer iterations do not tell the whole
computational story: Ours uses \(48\) update and \(155\) relaxation gradients,
for \(203\) first-order calls in total, whereas AGMsDR uses \(20\) update and
\(214\) relaxation gradients, for a total of \(234\).  Frozen-FGM uses only
\(110\) first-order calls and has the lowest median runtime, while Ours remains
slightly faster than AGMsDR in this implementation.  Importantly, AGMsDR is
itself an accelerated method for convex \((L_0,L_1)\)-smooth optimization
\citep{vankov2024l0l1}.  As emphasized in Section~\ref{sec:experiments}, we do
not claim that Ours uniformly outperforms AGMsDR: the methods address different
smoothness models, and this instance indeed favors AGMsDR in outer iterations.
Rather, the experiment supports the predicted acceleration under the broader
\((H_0,H_1)\)-smoothness model and shows that it can be achieved with
competitive total oracle cost.

\subsection{Uniform versus non-uniform coordinate sampling}
\label{app:coordinate-experiment}

We next isolate the effect of the sampling distribution by changing only the
coordinate probabilities while keeping the restart wrapper and accelerated
updates fixed.  A controlled heterogeneous instance makes the improvement
predicted by the coordinate-wise complexity directly measurable.

\paragraph{Problem setup.}
We consider the separable objective \(f(x):=\sum_{i=1}^{24}\phi_i(x_i)\), where,
inside a prescribed cutoff,
\(\phi_i(t):=(H_{0,i}/H_{1,i})
(\cosh(\sqrt{H_{1,i}}t)-1)\), followed by a convex \(C^2\) quadratic
continuation.  The first four coordinates have
\((H_{0,i},H_{1,i})=(1,2)\), while the remaining twenty have
\((H_{0,i},H_{1,i})=(10^{-2},2\cdot10^{-2})\).  Consequently,
\(S_{1/2}^{(0)}=6\) and \(S_{1/2}^{(1)}=6\sqrt2\), whereas the corresponding
uniform quantities are \(d\sqrt{H_0}=24\) and \(d\sqrt{H_1}=24\sqrt2\).
This controlled anisotropy isolates the gain predicted by
Theorem~\ref{thm:nonuniform-coordinate-main}.

\paragraph{Methods and metrics.}
We compare our uniform and non-uniform coordinate methods from
Algorithms~\ref{alg:coordinate-inner} and
\ref{alg:nonuniform-coordinate-inner}.  They use the same restart wrapper and
accelerated updates; the uniform method samples \(i_k\) with probability
\(1/d\), whereas the non-uniform method uses
\(p_{k,i}\propto\sqrt{H_{0,i}+H_{1,i}\F(y_k)}\) and adjusts its curvature and
accelerated weights accordingly.  The latter rule extends the classical
square-root importance sampling of \citet{Nesterov_2017} by adapting the
probabilities to the current gap.  Both methods use the same initial points and
desired relative gap \(10^{-5}\).  Figure~\ref{fig:coordinate-sampling} shows
pointwise medians and interquartile ranges over \(20\) random seeds.  The left
panel counts the coordinate derivatives used in the updates.  We implement
segment relaxation by golden-section search, which compares only function
values and never evaluates a full gradient, using tolerance \(10^{-10}\) and
at most \(24\) search iterations; the right panel adds these function-value
queries.  Thus the experiment isolates the effect of the sampling rule rather
than benchmarking methods based on global coordinate smoothness.

\begin{figure}[H]
  \centering
  \includegraphics[width=\textwidth]{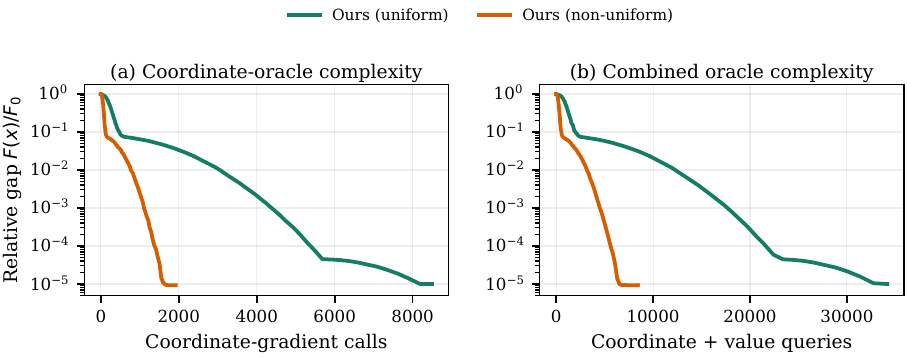}
  \caption{\small Our uniform and non-uniform coordinate methods on the
  anisotropic capped-cosh objective: relative gap versus (a) coordinate-gradient
  calls and (b) coordinate-gradient plus function-value calls.  Segment
  relaxation uses only function comparisons.  Curves and bands show medians
  and interquartile ranges over 20 seeds.}
  \label{fig:coordinate-sampling}
\end{figure}

\paragraph{Discussion.}
Both curves exhibit an approximately linear initial segment on the
semilogarithmic scale, again matching the logarithmic, gap-dominated part of the
theory.  Non-uniform sampling reaches the same decrease using substantially
fewer oracle calls.  At \(\F(x)/F_0\le10^{-5}\), it reduces the median number of
coordinate-gradient calls from \(8170\) to \(1598\), a factor of \(5.1\), and
the median combined number of coordinate-gradient and function-value queries
from \(32{,}914\) to \(6453\), also a factor of \(5.1\).  No full gradients are
used.  These gains are consistent with the fourfold reduction of both
\(S_{1/2}^{(0)}\) and \(S_{1/2}^{(1)}\) relative to their uniform upper bounds.
The narrow interquartile bands further indicate that the gain is stable across
the sampled coordinate sequences.  These results support the coordinate-wise
complexity in Theorem~\ref{thm:nonuniform-coordinate-main} and show that its
dependence on coordinate heterogeneity is visible in practice.

\subsection{Inexact Segment-Relaxation Sweep}
\label{app:inexact-relaxation-experiment}

As noted in Section~\ref{sec:discussion-limitations}, exact segment
minimization in line~3 of
Algorithm~\ref{alg:full-gradient-inner} is a convenient theoretical
primitive, but a practical implementation must stop its one-dimensional
solver after finitely many oracle queries.  This experiment quantifies the
resulting tradeoff.  We replace the exact minimizer by the verifiable residual
from Definition~\ref{def:inexact-segment-relaxation}, vary its tolerance over
five orders of magnitude, and record both the outer convergence and the full
cost of the segment solver.  This directly tests the inexact-relaxation theory
of Appendix~\ref{app:inexact-relaxation}: we evaluate the potential inequality
in Lemma~\ref{lem:inexact-segment-potential} at every iteration.

\paragraph{Problem setup.}
We use the chain-cosh objective from Section~\ref{sec:experiments}.  In
dimension \(d=32\), it is
\begin{equation}
\label{eq:inexact-experiment-objective}
  f(x)
  =
  \frac{\mu_q}{8}x^\top Qx
  +\psi(u^\top x),
  \qquad
  Q=\operatorname{tridiag}(-1,2,-1),
  \qquad \norm{u}=1,
\end{equation}
with \(\mu_q=\mu_\psi=1/2\).  The tridiagonal quadratic gives a coupled,
ill-conditioned geometry, while the nonlinear term has gap-dependent
curvature.  For \(H_1=4\) and \(\lvert t\rvert\le c\), we set
\begin{equation}
\label{eq:inexact-experiment-cosh}
  \psi(t)
  =
  \frac{\mu_\psi}{H_1}
  \bigl(\cosh(\sqrt{H_1}t)-1\bigr)
  =
  \frac18\bigl(\cosh(2t)-1\bigr).
\end{equation}
For each initial point, with \(F_0=f(x^0)-f^*\), the cutoff is
\begin{equation}
\label{eq:inexact-experiment-cutoff}
  c
  =
  \frac12\operatorname{arcosh}\!\left(1+8F_0\right).
\end{equation}
Outside \([-c,c]\), the nonlinear term is continued by
\begin{equation}
\label{eq:inexact-experiment-continuation}
  \psi_c(t)
  =
  \psi(c)+\psi'(c)(\lvert t\rvert-c)
  +\frac12\psi''(c)(\lvert t\rvert-c)^2.
\end{equation}
The continuation matches the value, first derivative, and second derivative at
the cutoff.  It is convex and globally smooth, and it preserves
\(\norm{\nabla^2 f(x)}\le 1+4\F(x)\).  Hence the constants supplied to the
proposed method are \(H_0=1\) and \(H_1=4\).  The unique minimizer is
\(x^*=0\), with \(f^*=0\).

\paragraph{Inexact segment routine.}
At iteration \(k\), define the segment direction and its one-dimensional
directional derivative by
\begin{equation}
\label{eq:inexact-experiment-directional-derivative}
  d_k:=x_k-v_k,
  \qquad
  y_k(\beta):=v_k+\beta d_k,
  \qquad
  g_k(\beta)
  :=
  \inner{\nabla f(y_k(\beta))}{d_k},
  \quad \beta\in[0,1].
\end{equation}
Convexity makes \(g_k\) nondecreasing.  If \(g_k(0)\ge0\), then \(v_k\)
minimizes the segment; if \(g_k(1)\le0\), then \(x_k\) does.  Otherwise an
interior zero is bracketed by points \(\beta^-<\beta^+\) satisfying
\(g_k(\beta^-)<0\le g_k(\beta^+)\).  Derivative bisection repeatedly halves
this bracket and returns its upper endpoint as soon as
\begin{equation}
\label{eq:inexact-experiment-residual}
  0\le g_k(\beta^+)\le\delta
  \qquad\text{and}\qquad
  f(y_k(\beta^+))\le f(x_k).
\end{equation}
The second condition is checked explicitly by function-value queries.  To see
why the first condition gives the required residual on the entire segment,
write \(w=v_k+\alpha d_k\), where \(\alpha\in[0,1]\).  Then
\begin{equation}
\label{eq:inexact-experiment-segment-guarantee}
\begin{aligned}
  \inner{\nabla f(y_k(\beta^+))}{w-y_k(\beta^+)}
  &=(\alpha-\beta^+)g_k(\beta^+)\\
  &\ge-\beta^+g_k(\beta^+)\\
  &\ge-\delta.
\end{aligned}
\end{equation}
The first inequality uses \(\alpha\ge0\) and \(g_k(\beta^+)\ge0\); the
second uses \(\beta^+\le1\) and \(g_k(\beta^+)\le\delta\).  Thus the returned
point satisfies both conditions in
Definition~\ref{def:inexact-segment-relaxation}.  Each evaluation of \(g_k\)
uses one full gradient.  The implementation uses interval tolerance
\(10^{-14}\) as a numerical safeguard and at most \(64\) bisection
iterations; a call that reaches the limit without satisfying
\eqref{eq:inexact-experiment-residual} is recorded as failed.

\paragraph{Method and metrics.}
We run the restarted full-gradient method from
Algorithms~\ref{alg:restart-meta}--\ref{alg:full-gradient-inner}.  All trials
use \(\theta=\tau=1\) and the same local curvature rule
\begin{equation}
\label{eq:inexact-experiment-curvature}
  M_k
  =
  \max\!\left\{
    2\bigl(H_0+H_1\F(y_k)\bigr),
    \frac{\norm{\nabla f(y_k)}}{r_1}
  \right\}.
\end{equation}
Here
\[
  r_1
  =
  \frac{1}{
    (2\sqrt3+\sqrt6)\sqrt{H_1}
  }
\]
is the local-model radius from Algorithm~\ref{alg:full-gradient-inner}.  The
chain quadratic is strongly convex with certified modulus
\[
  \mu_{\rm chain}
  =
  \frac{\mu_q}{4}
  \left(
    2-2\cos\frac{\pi}{d+1}
  \right).
\]
Since the nonlinear term is convex, the initial sublevel set therefore
satisfies
\[
  \norm{x^0-x^*}
  \le
  \widetilde R
  :=
  \sqrt{\frac{2F_0}{\mu_{\rm chain}}}.
\]
This certified value is supplied to the restart wrapper when it computes the
phase lengths; the adaptive alternative in
Section~\ref{app:adaptive-phase-lengths} is not used in this sweep.

The initial points are sampled as
\(x^0\sim\mathcal N(0,0.7^2I)\), and every run stops when
\(\F(x)/F_0\le10^{-6}\) or after \(5000\) outer iterations.  The only
quantity varied in the sweep is the fixed absolute segment tolerance
\begin{equation}
\label{eq:inexact-experiment-deltas}
  \delta
  \in
  \{10^{-2},10^{-4},10^{-6},10^{-8},10^{-10}\}.
\end{equation}
Using a fixed \(\delta\) deliberately isolates its computational effect.  A
theorem-preserving implementation may instead use the phase-wise schedule in
Corollary~\ref{cor:inexact-phase-schedule}.

For every outer iteration, we compute the accelerated potential
\begin{equation}
\label{eq:inexact-experiment-potential}
  \Phi_k
  :=
  A_k\F(x_k)+\frac12\norm{v_k-x^*}^2.
\end{equation}
Lemma~\ref{lem:inexact-segment-potential} predicts the one-step bound
\begin{equation}
\label{eq:inexact-experiment-potential-bound}
  \Phi_{k+1}
  \le
  \Phi_k+A_{k+1}\delta.
\end{equation}
We report three diagnostics for this bound.  The \emph{potential error} is the
sum of positive phase-wise drifts,
\[
  \sum_s\bigl(\Phi_{s,\mathrm{end}}-\Phi_{s,0}\bigr)_+.
\]
The \emph{allowance} is the accumulated theoretical error
\(\sum_{s,k}A_{k+1}\delta\).  Finally, the \emph{maximum violation} is
\[
  \max_{s,k}
  \bigl(\Phi_{k+1}-\Phi_k-A_{k+1}\delta\bigr)_+,
\]
which would be positive if the observed one-step potential increase exceeded
the allowance in \eqref{eq:inexact-experiment-potential-bound}.

The cost columns distinguish the outer algorithm from its one-dimensional
solver.  A \emph{relaxation query} is a gradient or function-value query made
inside derivative bisection.  A \emph{total query} is any full-gradient or
function-value query made either by the outer update or by the segment solver.
Thus relaxation queries form a subset of total queries and are not counted
twice.  Runtime is measured end to end.  All table entries are medians over
five seeded initial points.

\begin{table}[H]
\centering
\small
\caption{Inexact-relaxation sweep.  Entries are medians over five seeds; all
runs reach the target relative gap \(10^{-6}\).}
\label{tab:inexact-relaxation-sweep}
\resizebox{\textwidth}{!}{%
\begin{tabular}{rrrrrrrrr}
\toprule
\(\delta\) & Outer iter. & Relax. queries & Total queries & Runtime (s) &
Max. residual & Potential error & Allowance & Max. violation \\
\midrule
\(10^{-2}\)  & 942 & 1878 & 3762 & 0.092 & \(9.98\times10^{-3}\)  & 596.7 & \(3.466\times10^5\)    & 0 \\
\(10^{-4}\)  & 193 & 462  & 848  & 0.023 & \(9.95\times10^{-5}\)  & 0     & 27.39                   & 0 \\
\(10^{-6}\)  & 76  & 548  & 700  & 0.011 & \(9.63\times10^{-7}\)  & 0     & \(1.647\times10^{-2}\) & 0 \\
\(10^{-8}\)  & 77  & 896  & 1048 & 0.015 & \(9.83\times10^{-9}\)  & 0     & \(1.647\times10^{-4}\) & 0 \\
\(10^{-10}\) & 80  & 1200 & 1344 & 0.018 & \(9.82\times10^{-11}\) & 0     & \(1.680\times10^{-6}\) & 0 \\
\bottomrule
\end{tabular}%
}
\end{table}

\begin{figure}[H]
  \centering
  \includegraphics[width=0.94\textwidth]{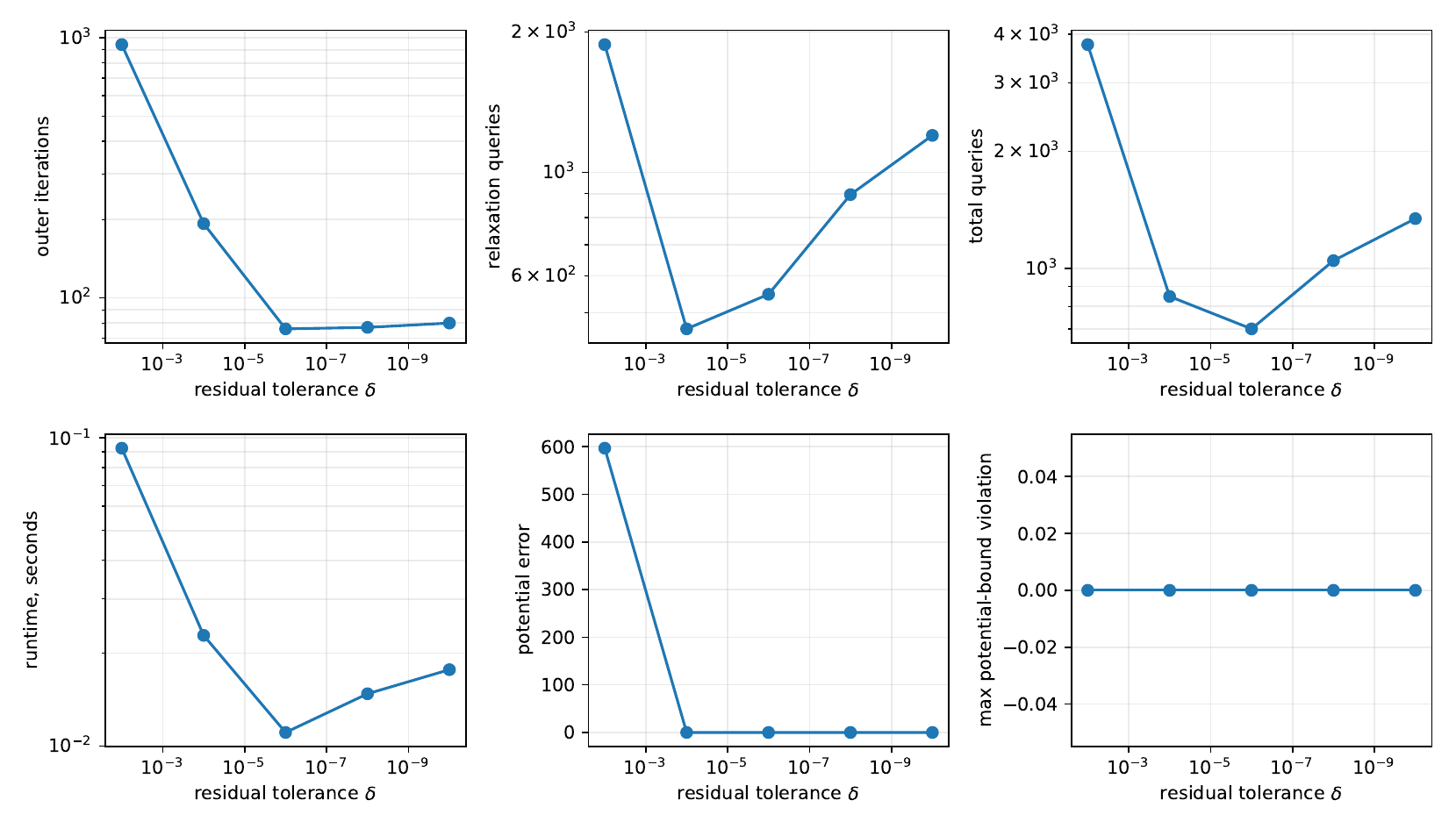}
  \caption{Iteration, primitive-query, runtime, and potential diagnostics as a
  function of the requested absolute segment residual.  The horizontal order
  is reversed so that increasingly accurate relaxations appear from left to
  right.}
  \label{fig:inexact-relaxation-sweep}
\end{figure}

\paragraph{Results.}
All \(25\) trials attain the desired relative gap.  At the loosest tolerance,
\(\delta=10^{-2}\), a segment call needs only about two internal queries on
average, but the inaccurate coupling point leads to \(942\) outer iterations
and \(3762\) total queries.  The phase potentials exhibit a positive drift of
\(596.7\).  This drift does not contradict
\eqref{eq:inexact-experiment-potential-bound}: the corresponding accumulated
allowance is \(3.466\times10^5\), so the bound is valid but too loose to be
informative about practical progress.

Reducing the tolerance to \(10^{-4}\) lowers the total cost to \(848\)
queries.  The smallest observed cost occurs at \(\delta=10^{-6}\): the method
needs \(76\) outer iterations, \(548\) relaxation queries, and \(700\) total
queries, with median runtime \(0.011\) seconds.  At this tolerance the
relaxation is accurate enough that no positive phase-wise potential drift is
observed, while the inner search remains inexpensive.

Further tightening does not improve the outer convergence.  The counts for
\(\delta=10^{-6},10^{-8},10^{-10}\) are \(76,77,80\), respectively.  In
contrast, their relaxation-query counts increase from \(548\) to \(896\) and
then \(1200\).  Consequently, the nearly exact choice \(\delta=10^{-10}\)
uses \(1344/700\approx1.92\) times as many total queries as
\(\delta=10^{-6}\).  The experiment therefore exhibits a U-shaped
cost--accuracy tradeoff: a loose residual degrades the outer steps, whereas an
unnecessarily tight residual spends additional oracle calls without reducing
the number of outer iterations.

The diagnostic columns verify that the requested tolerance is actually
enforced.  For every row, the largest measured residual is below \(\delta\);
no segment call reaches the \(64\)-iteration limit, and the number of failed
relaxations is zero.  The maximum one-step potential violation is also zero at
machine precision for every tolerance.  This is a numerical consistency check
of Lemma~\ref{lem:inexact-segment-potential}, rather than an independent proof
of the lemma.

\paragraph{Discussion.}
These results show that exact segment minimization is not needed on this
problem.  A finite residual can preserve the accelerated outer behavior, and
the arithmetic spent on the one-dimensional solver should be included when
choosing that residual.  For the present scaling and target gap,
\(\delta=10^{-6}\) gives the best balance between outer and inner work.
However, this numerical value is not universal: \(\delta\) is an absolute
directional-derivative tolerance and therefore depends on the scale of the
objective, the segment length, and the requested final accuracy.  The
phase-wise rule in Corollary~\ref{cor:inexact-phase-schedule}, which controls
\(\sum_k A_{k+1}\delta_k\), is the appropriate theorem-preserving choice when
these scales vary across restart phases.  The fixed-tolerance sweep serves a
different purpose: it isolates the empirical cost of under-solving and
over-solving the same segment subproblem.

\paragraph{Computational environment.}
All experiments use Python 3.13.11 and NumPy 2.4.1 on an Intel Xeon Gold 6348;
the finite-sum runs additionally use PyTorch 2.10.0 and an NVIDIA A100 80GB.
The configuration files record all seeds, tolerances, and stopping criteria.

\section{Practical Extensions}
\label{app:practical-extensions}

This section records two implementation-oriented extensions of the preceding
proofs.  Section~\ref{app:inexact-relaxation} replaces exact segment
minimization by a verifiable first-order residual and tracks the resulting
error through the accelerated potential.  Section~\ref{app:adaptive-phase-lengths}
then removes prior knowledge of \(\widetilde R\) from the phase-length rules by
restarting unsuccessful trials with doubled budgets.  In both cases, we first
state the modified guarantee and then derive it using the same notation as in
the proofs of Theorems~\ref{thm:full-gradient-main}--
\ref{thm:nonuniform-coordinate-main}.
The fixed-residual sweep in
Appendix~\ref{app:inexact-relaxation-experiment} complements the first
extension by measuring its outer convergence, primitive-query cost, and
potential error.

\subsection{Inexact Segment Relaxation}
\label{app:inexact-relaxation}

We begin with the segment step in line~3 of
Algorithms~\ref{alg:full-gradient-inner}--
\ref{alg:nonuniform-coordinate-inner}.  Its exact solution enters the proofs
through only two consequences: the variational inequality used in the
accelerated coupling and the monotonicity relation \(f(y_k)\le f(x_k)\) used
in the curvature bounds.  The following definition relaxes the former while
retaining the latter.

\begin{definition}[Inexact segment relaxation]
\label{def:inexact-segment-relaxation}
Given \(v,x\in\mathbb R^d\) and \(\delta\ge0\), a point
\(y\in[v,x]\) is a \(\delta\)-relaxed segment point if
\begin{equation}
\label{eq:inexact-segment-conditions}
  f(y)\le f(x),
  \qquad
  \inner{\nabla f(y)}{w-y}\ge-\delta
  \quad\text{for every }w\in[v,x].
\end{equation}
For \(\delta=0\), every exact minimizer over \([v,x]\) satisfies
\eqref{eq:inexact-segment-conditions}.
\end{definition}

The next lemma isolates the only change in the accelerated potential.  It
applies to the full-gradient, uniform-coordinate, and non-uniform-coordinate
inner methods.

\begin{lemma}[Potential stability under inexact relaxation]
\label{lem:inexact-segment-potential}
At iteration \(k\), replace the exact segment minimizer by a
\(\delta_k\)-relaxed point in the sense of
Definition~\ref{def:inexact-segment-relaxation}.  Let
\[
  \Phi_k
  :=
  A_k\F(x_k)+\frac12\norm{v_k-x^*}^2.
\]
For the full-gradient method,
\begin{equation}
\label{eq:inexact-full-potential-step}
  \Phi_{k+1}
  \le
  \Phi_k+A_{k+1}\delta_k.
\end{equation}
For either randomized coordinate method, conditionally on the history before
the coordinate is sampled,
\begin{equation}
\label{eq:inexact-coordinate-potential-step}
  \E[\Phi_{k+1}\mid\mathcal F_k]
  \le
  \Phi_k+A_{k+1}\delta_k.
\end{equation}
Consequently, after \(N\) inner iterations,
\begin{equation}
\label{eq:inexact-telescoped-potential}
  A_N\F(x_N)
  \le
  \frac12\norm{z-x^*}^2
  +
  \sum_{k=0}^{N-1}A_{k+1}\delta_k
\end{equation}
in the full-gradient case.  For either coordinate method, the corresponding
conditional bound is
\begin{equation}
\label{eq:inexact-telescoped-coordinate-potential}
  \E[A_N\F(x_N)\mid z]
  \le
  \frac12\norm{z-x^*}^2
  +
  \sum_{k=0}^{N-1}\E[A_{k+1}\delta_k\mid z].
\end{equation}
\end{lemma}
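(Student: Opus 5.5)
The plan is to rerun the potential argument from the proofs of Theorems~\ref{thm:full-gradient-main}--\ref{thm:nonuniform-coordinate-main}, changing only the coupling step, since exact segment minimization enters there solely through \(\inner{\nabla f(y_k)}{w_k-y_k}\ge0\).

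\emph{Modified coupling.} Take \(w_k:=(A_kx_k+a_{k+1}v_k)/A_{k+1}\), which lies on \([v_k,x_k]\). Definition~\ref{def:inexact-segment-relaxation} then gives \(\inner{\nabla f(y_k)}{w_k-y_k}\ge-\delta_k\), that is, \(A_{k+1}\inner{\nabla f(y_k)}{y_k-w_k}\le A_{k+1}\delta_k\). The two convexity inequalities at \(y_k\) (comparison points \(x_k\) and \(x^*\)) and the vector identity \(A_k(y_k-x_k)+a_{k+1}(y_k-x^*)=a_{k+1}(v_k-x^*)+A_{k+1}(y_k-w_k)\) use only \(y_k\in[v_k,x_k]\) and are unchanged. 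Combining them yields
\[
  A_{k+1}\F(y_k)-A_k\F(x_k)\le a_{k+1}\inner{\nabla f(y_k)}{v_k-x^*}+A_{k+1}\delta_k .
\]

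\emph{Descent and weight steps.} The remaining ingredients carry over as follows.
\begin{itemize}
\item The descent inequalities \eqref{eq:appendix-full-descent}, \eqref{eq:appendix-coordinate-descent}, and \eqref{eq:appendix-nonuniform-descent} need only the radius bound and the curvature choice at \(y_k\). Lemma~\ref{lem:radius-curvature-bound} provides these at any point, so they hold verbatim.
\item The distance identities for \(v_{k+1}\) are algebraic (full gradient) or follow from the conditional moment identities (coordinate methods). Neither depends on how \(y_k\) was chosen.
\item The weight cancellations \(M_ka_{k+1}^2=\theta A_{k+1}\), \(d^2M_ka_{k+1}^2=\theta A_{k+1}\), and \eqref{eq:appendix-nonuniform-weight-balance} are unchanged.
\end{itemize}
Adding the weighted descent and distance relations and substituting the modified coupling therefore gives \eqref{eq:inexact-full-potential-step}. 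In the coordinate cases, \(y_k\), \(\delta_k\), and \(A_{k+1}\) are \(\mathcal F_k\)-measurable, because the segment search finishes before \(i_k\) is sampled. The additive term thus passes through the conditional expectation unchanged, giving \eqref{eq:inexact-coordinate-potential-step}.

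\emph{Telescoping.} In the full-gradient case, sum from \(k=0\) to \(N-1\), use \(\Phi_0=\frac12\norm{z-x^*}^2\), and drop the nonnegative term \(\frac12\norm{v_N-x^*}^2\); this gives \eqref{eq:inexact-telescoped-potential}. In the coordinate cases, apply the tower property as in \eqref{eq:appendix-coordinate-tower-step}, now keeping \(\E[A_{k+1}\delta_k\mid z]\) at each step, which gives \eqref{eq:inexact-telescoped-coordinate-potential}.

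\emph{Main obstacle.} The step needing care is checking that nothing else used exactness. The condition \(f(y)\le f(x)\) in Definition~\ref{def:inexact-segment-relaxation} preserves the monotonicity \(f(x_{k+1})\le f(y_k)\le f(x_k)\), and hence the bound \(\F(y_k)\le\F(z)\) used for curvature. It also keeps \(\delta_k\) from depending on \(i_k\). Both points must be stated explicitly so that the measurability argument is justified.
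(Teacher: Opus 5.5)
Your proposal is correct and follows the paper's proof step for step. It uses the same convex combination $w_k$ and the residual bound $\inner{\nabla f(y_k)}{y_k-w_k}\le\delta_k$, which gives the coupling inequality with the extra $A_{k+1}\delta_k$; the descent, distance and weight-cancellation steps are unchanged for all three methods, and it uses the same $\mathcal F_k$-measurability of $y_k$, $A_{k+1}$, $\delta_k$ and the same telescoping, via the tower property in the coordinate cases. One cosmetic slip: $\delta_k$ does not depend on $i_k$ because the segment search finishes before sampling, as you say earlier, not because of the condition $f(y)\le f(x)$.
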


\begin{proof}
Fix an iteration \(k\), write \(a=a_{k+1}\), and set
\(A^+=A_k+a=A_{k+1}\).  Define the coupling point
\[
  w_k
  :=
  \frac{A_kx_k+av_k}{A^+}\in[v_k,x_k].
\]
The inclusion follows because \(A_k/A^+\) and \(a/A^+\) are nonnegative and
sum to one.  Convexity of \(f\), applied at \(y_k\) with comparison points
\(x_k\) and \(x^*\), gives
\begin{equation}
\label{eq:inexact-two-convexity-bounds}
\begin{aligned}
  f(y_k)-f(x_k)
  &\le
  \inner{\nabla f(y_k)}{y_k-x_k},\\
  \F(y_k)
  &\le
  \inner{\nabla f(y_k)}{y_k-x^*}.
\end{aligned}
\end{equation}
The first inequality is the rearrangement of the convexity inequality with
comparison point \(x_k\).  For the second inequality, convexity gives
\(f(x^*)\ge f(y_k)+\inner{\nabla f(y_k)}{x^*-y_k}\); rearranging and using
\(\F(y_k)=f(y_k)-f(x^*)\) gives the claimed bound.

Using \(A^+=A_k+a\) and
\(\F(y_k)-\F(x_k)=f(y_k)-f(x_k)\), we have
\begin{equation}
\label{eq:inexact-weighted-convexity}
\begin{aligned}
  A^+\F(y_k)-A_k\F(x_k)
  &=
  A_k\bigl(f(y_k)-f(x_k)\bigr)+a\F(y_k)\\
  &\le
  A_k\inner{\nabla f(y_k)}{y_k-x_k}
  +a\inner{\nabla f(y_k)}{y_k-x^*}\\
  &=
  \inner{\nabla f(y_k)}{
    A_k(y_k-x_k)+a(y_k-x^*)
  }.
\end{aligned}
\end{equation}
The inequality substitutes the two bounds in
\eqref{eq:inexact-two-convexity-bounds}; multiplication by the nonnegative
weights \(A_k\) and \(a\) preserves their directions.  The final equality
uses linearity of the inner product.

The vector in the last line of \eqref{eq:inexact-weighted-convexity} satisfies
\begin{equation}
\label{eq:inexact-vector-identity}
  A_k(y_k-x_k)+a(y_k-x^*)
  =
  a(v_k-x^*)+A^+(y_k-w_k).
\end{equation}
Indeed, substituting \(A^+w_k=A_kx_k+av_k\) into the right-hand side gives
\[
\begin{aligned}
  a(v_k-x^*)+A^+(y_k-w_k)
  &=av_k-ax^*+A^+y_k-A_kx_k-av_k\\
  &=A^+y_k-A_kx_k-ax^*\\
  &=A_k(y_k-x_k)+a(y_k-x^*),
\end{aligned}
\]
where the last equality again uses \(A^+=A_k+a\).

Definition~\ref{def:inexact-segment-relaxation}, applied with \(w=w_k\),
gives \(\inner{\nabla f(y_k)}{w_k-y_k}\ge-\delta_k\).  Multiplication by
\(-1\) reverses the inequality, and therefore
\(\inner{\nabla f(y_k)}{y_k-w_k}\le\delta_k\).  Substitution of
\eqref{eq:inexact-vector-identity} into
\eqref{eq:inexact-weighted-convexity}, followed by this residual bound, yields
\begin{equation}
\label{eq:inexact-coupling}
\begin{aligned}
  A_{k+1}\F(y_k)-A_k\F(x_k)
  &\le
  a_{k+1}\inner{\nabla f(y_k)}{v_k-x^*}
  +A_{k+1}\inner{\nabla f(y_k)}{y_k-w_k}\\
  &\le
  a_{k+1}\inner{\nabla f(y_k)}{v_k-x^*}
  +A_{k+1}\delta_k.
\end{aligned}
\end{equation}
We now insert \eqref{eq:inexact-coupling} into the potential calculation.  For
the full-gradient method, the accepted local model gives
\begin{equation}
\label{eq:inexact-full-descent}
  A_{k+1}\F(x_{k+1})
  \le
  A_{k+1}\F(y_k)
  -\frac{A_{k+1}}{2M_k}\norm{\nabla f(y_k)}^2.
\end{equation}
This is the descent inequality from the exact proof multiplied by the
nonnegative weight \(A_{k+1}\).  Expanding the squared norm after
\(v_{k+1}=v_k-a_{k+1}\nabla f(y_k)\) gives
\begin{equation}
\label{eq:inexact-full-distance}
\begin{aligned}
  \frac12\norm{v_{k+1}-x^*}^2
  &=
  \frac12\norm{v_k-x^*}^2
  -a_{k+1}\inner{\nabla f(y_k)}{v_k-x^*}\\
  &\quad+
  \frac{a_{k+1}^2}{2}\norm{\nabla f(y_k)}^2.
\end{aligned}
\end{equation}
The middle term in \eqref{eq:inexact-full-distance} is the cross term of the
squared norm, and the last term is the squared length of the update.  Adding
\eqref{eq:inexact-full-descent} and
\eqref{eq:inexact-full-distance}, then rearranging
\eqref{eq:inexact-coupling} as
\[
  A_{k+1}\F(y_k)
  -a_{k+1}\inner{\nabla f(y_k)}{v_k-x^*}
  \le
  A_k\F(x_k)+A_{k+1}\delta_k,
\]
gives
\[
  \Phi_{k+1}
  \le
  \Phi_k+A_{k+1}\delta_k
  +\left(
    \frac{a_{k+1}^2}{2}
    -\frac{A_{k+1}}{2M_k}
  \right)\norm{\nabla f(y_k)}^2.
\]
The weight relation \(M_ka_{k+1}^2=\theta A_{k+1}\) implies
\[
  \frac{a_{k+1}^2}{2}
  -\frac{A_{k+1}}{2M_k}
  =
  -\frac{(1-\theta)A_{k+1}}{2M_k}
  \le0.
\]
The equality substitutes
\(a_{k+1}^2=\theta A_{k+1}/M_k\); the inequality uses
\(\theta\in(0,1]\), \(A_{k+1}\ge0\), and \(M_k>0\).  Dropping this
nonpositive term proves \eqref{eq:inexact-full-potential-step}.

For the uniform-coordinate method, the conditional descent and
squared-distance calculations in
\eqref{eq:appendix-coordinate-expected-descent} and
\eqref{eq:appendix-coordinate-expected-distance} are unaffected by the
segment accuracy.  Replacing the exact coupling by
\eqref{eq:inexact-coupling} therefore gives
\[
  \E[\Phi_{k+1}\mid\mathcal F_k]
  \le
  \Phi_k+A_{k+1}\delta_k
  +\left(
    \frac{d a_{k+1}^2}{2}
    -\frac{A_{k+1}}{2dM_k}
  \right)\norm{\nabla f(y_k)}^2.
\]
Since \(d^2M_ka_{k+1}^2=\theta A_{k+1}\), the coefficient equals
\[
  -\frac{(1-\theta)A_{k+1}}{2dM_k}\le0.
\]
Thus dropping the nonpositive gradient term proves
\eqref{eq:inexact-coordinate-potential-step} for uniform sampling.

For non-uniform sampling, the exact calculation
\eqref{eq:appendix-nonuniform-potential-step} similarly becomes
\[
\begin{aligned}
  \E[\Phi_{k+1}\mid\mathcal F_k]
  \le{}&
  \Phi_k+A_{k+1}\delta_k\\
  &+\frac12\sum_{i=1}^d
  \left(
    \frac{a_{k+1}^2}{p_{k,i}}
    -\frac{A_{k+1}p_{k,i}}{M_{k,i}}
  \right)(\nabla_i f(y_k))^2.
\end{aligned}
\]
By \eqref{eq:appendix-nonuniform-weight-balance}, every coefficient in this
sum equals
\(-{(1-\theta)A_{k+1}}/
(\mathcal M_k\sqrt{M_{k,i}})\le0\).  Each squared partial derivative is
nonnegative, so the entire sum is nonpositive and may be dropped.  This proves
\eqref{eq:inexact-coordinate-potential-step} for non-uniform sampling.  In
both coordinate cases, \(y_k\), \(A_{k+1}\), and \(\delta_k\) are
\(\mathcal F_k\)-measurable because the segment point and its tolerance are
chosen before the coordinate is sampled; hence the residual term remains
outside the conditional expectation.

It remains to telescope the one-step estimates.  For the full-gradient
method, repeated application of
\eqref{eq:inexact-full-potential-step} gives
\[
  \Phi_N
  \le
  \Phi_0+\sum_{k=0}^{N-1}A_{k+1}\delta_k.
\]
Because \(A_0=0\) and \(v_0=z\), we have
\(\Phi_0=\frac12\norm{z-x^*}^2\).  Moreover,
\(A_N\F(x_N)\le\Phi_N\) because the remaining term
\(\frac12\norm{v_N-x^*}^2\) is nonnegative.  These two observations give
\eqref{eq:inexact-telescoped-potential}.  For a coordinate method, the tower
property gives at every iteration
\[
\begin{aligned}
  \E[\Phi_{k+1}\mid z]
  &=
  \E[\E[\Phi_{k+1}\mid\mathcal F_k]\mid z]\\
  &\le
  \E[\Phi_k\mid z]
  +\E[A_{k+1}\delta_k\mid z].
\end{aligned}
\]
Summing these inequalities, substituting the same value of \(\Phi_0\), and
using \(A_N\F(x_N)\le\Phi_N\) proves
\eqref{eq:inexact-telescoped-coordinate-potential}.

Finally, the first condition in
\eqref{eq:inexact-segment-conditions}, together with the unchanged descent
step, gives \(f(x_{k+1})\le f(y_k)\le f(x_k)\).  Thus all local-curvature and
radius arguments from the exact proofs continue to hold.
\end{proof}

The lemma controls the accumulated error
\(\sum_k A_{k+1}\delta_k\).  We next convert this global condition into a
per-iteration tolerance that is known during a restart phase and show that it
preserves the phase contraction.

\begin{corollary}[A phase-wise accuracy schedule]
\label{cor:inexact-phase-schedule}
Consider a phase with certified level \(\Delta_s\) and length \(N_s\).  Let
\(\underline A_{s,N_s}>0\) be a lower bound satisfying
\(A_{N_s}\ge\underline A_{s,N_s}\) for every realization, conditionally on
the phase start in the randomized methods.  Suppose the inexact points satisfy
the weighted schedule
\begin{equation}
\label{eq:inexact-simple-schedule}
  A_{k+1}\delta_{s,k}
  \le
  \frac{\underline A_{s,N_s}\Delta_s}{4N_s},
  \qquad k=0,\ldots,N_s-1.
\end{equation}
Then
\[
  \sum_{k=0}^{N_s-1}A_{k+1}\delta_{s,k}
  \le
  \frac14 \underline A_{s,N_s}\Delta_s.
\]
If the phase lengths in Theorems~\ref{thm:full-gradient-main},
\ref{thm:coordinate-main}, and~\ref{thm:nonuniform-coordinate-main} are
multiplied by \(\sqrt2\), the exact part of the one-run estimate is at most
\(\Delta_s/4\), while the inexactness contributes at most
\(\Delta_s/4\).  Hence every phase still contracts the gap by a factor two,
pathwise for the full-gradient method and in expectation for the coordinate
methods.  The full-gradient and coordinate-gradient complexities stated in the
three theorems are therefore unchanged in big-\(O\) notation; primitive
line-search queries are accounted for separately below.

For the deterministic full-gradient method, the simpler sufficient rule
\(\delta_{s,k}\le\Delta_s/(4N_s)\) may be used, because
\(A_{k+1}\le A_{N_s}\) and the pathwise potential can be divided directly by
\(A_{N_s}\).  The weighted schedule above is used for the randomized methods
to avoid dividing an expectation by the random weight \(A_{N_s}\).
\end{corollary}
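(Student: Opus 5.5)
The summation bound is immediate from the schedule: summing \eqref{eq:inexact-simple-schedule} over the \(N_s\) indices \(k=0,\ldots,N_s-1\) gives
\(\sum_k A_{k+1}\delta_{s,k}\le N_s\cdot\underline A_{s,N_s}\Delta_s/(4N_s)=\tfrac14\underline A_{s,N_s}\Delta_s\).
In the randomized case the bound holds pathwise, hence also after conditional expectation given \(z_s\).

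The rest is a one-run estimate combined with Lemma~\ref{lem:inexact-segment-potential}. For the full-gradient method, \eqref{eq:inexact-telescoped-potential} gives
\[
A_{N_s}\F(z_{s+1})\le\tfrac12\widetilde R^2+\tfrac14\underline A_{s,N_s}\Delta_s .
\]
Dividing by \(A_{N_s}\ge\underline A_{s,N_s}\) yields
\[
\F(z_{s+1})\le \widetilde R^2/(2\underline A_{s,N_s})+\Delta_s/4 .
\]
The lemma guarantees monotonicity \(f(y_k)\le f(x_k)\), so \(\F(y_k)\le\Delta_s\) and \(\norm{z_s-x^*}\le\widetilde R\) still hold. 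Hence the weight lower bound from the exact proofs remains valid:
\[
\underline A_{s,N_s}=\theta N_s^2/\bigl(4\tau\cstar(H_0+H_1\Delta_s)\bigr)
\]
in the full-gradient case, with the extra \(d^2\) or the \(\mathcal M_k\)-analogue \eqref{eq:appendix-nonuniform-weight-lower} in the coordinate cases.

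With \(N_s\) multiplied by \(\sqrt2\), the exact term \(2\tau\cstar\widetilde R^2(H_0+H_1\Delta_s)/(\theta N_s^2)\) acquires an extra factor \(1/2\). It therefore drops from \(\Delta_s/2\) to \(\Delta_s/4\), so the total is at most \(\Delta_s/2\).

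For the coordinate methods I would avoid dividing by the random \(A_{N_s}\). Instead, multiply \(\F(x_N)\ge0\) by the deterministic \(\underline A_{s,N_s}\le A_{N_s}\) and take expectations. Then \eqref{eq:inexact-telescoped-coordinate-potential} and the pathwise schedule give
\[
\underline A\,\E[\F(z_{s+1})\mid z_s]\le\tfrac12\widetilde R^2+\tfrac14\underline A\Delta_s .
\]
Dividing by \(\underline A\) gives the same two-term bound conditionally.

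The main obstacle is that \(\underline A\) depends on \(z_s\) through \(\F(z_s)\), whereas the induction only controls \(\E\F(z_s)\le\Delta_s\). I would handle this as the exact proofs do: write the exact part as \(C(H_0+H_1\F(z_s))/N_s^2\) and take total expectation using linearity. In the non-uniform case, use Jensen's inequality as in \eqref{eq:appendix-nonuniform-jensen}. The inexact part is bounded by \(\Delta_s/4\) pathwise, provided the schedule is stated with the \(\Delta_s\)-based lower bound. If needed, I would simply define \(\underline A_{s,N_s}\) via \(\F(z_s)\), which is known at the phase start, and note that the conditional inexact contribution remains \(\Delta_s/4\).

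Induction then gives the halving invariant, pathwise or in expectation as appropriate. The phase-length sums change only by the factor \(\sqrt2\), so the big-\(O\) complexities are unchanged.

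For the simpler full-gradient rule \(\delta_{s,k}\le\Delta_s/(4N_s)\), note that \(A_{k+1}\le A_{N_s}\). Hence \(\sum_k A_{k+1}\delta_{s,k}\le A_{N_s}\Delta_s/4\), and dividing the deterministic potential bound by \(A_{N_s}\) gives the same \(\Delta_s/4\) contribution.
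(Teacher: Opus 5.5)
Your proposal is correct and follows the paper's proof essentially step by step: you sum the schedule, divide the telescoped potential by \(A_{N_s}\ge\underline A_{s,N_s}\) in the full-gradient case, multiply by the deterministic \(\underline A_{s,N_s}\) before taking conditional expectations in the coordinate cases, and use the \(\sqrt2\) enlargement to halve the exact \(N_s^{-2}\) term. Your explicit treatment of the \(\F(z_s)\)-dependence via linearity and Jensen only spells out what the paper inherits implicitly from the exact proofs. One small correction: in the coordinate cases the \(\F(z_s)\)-based lower bound is required rather than optional, because a \(\Delta_s\)-based one need not hold pathwise when only \(\E\F(z_s)\le\Delta_s\) is known; this is exactly the choice the paper makes.
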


\begin{proof}
There are \(N_s\) terms in the sum.  Bounding each term by the right-hand side
of \eqref{eq:inexact-simple-schedule} therefore gives
\[
  \sum_{k=0}^{N_s-1}A_{k+1}\delta_{s,k}
  \le
  N_s\frac{\underline A_{s,N_s}\Delta_s}{4N_s}
  =
  \frac14\underline A_{s,N_s}\Delta_s.
\]
The equality cancels the positive factor \(N_s\).

For the full-gradient method, substituting this accumulated-error bound into
\eqref{eq:inexact-telescoped-potential} and dividing by \(A_{N_s}>0\) gives
\[
\begin{aligned}
  \F(x_{N_s})
  &\le
  \frac{\norm{z_s-x^*}^2}{2A_{N_s}}
  +\frac{\underline A_{s,N_s}}{A_{N_s}}\frac{\Delta_s}{4}\\
  &\le
  \frac{\norm{z_s-x^*}^2}{2\underline A_{s,N_s}}
  +\frac{\Delta_s}{4}.
\end{aligned}
\]
The second inequality uses
\(A_{N_s}\ge\underline A_{s,N_s}>0\): replacing \(A_{N_s}\) by its lower
bound enlarges the first fraction, while
\(\underline A_{s,N_s}/A_{N_s}\le1\) bounds the second.

For a coordinate method, the lower bound on \(A_{N_s}\) holds for every
realization conditionally on \(z_s\).  Since \(\F(x_{N_s})\ge0\), multiplying
it by \(\F(x_{N_s})\) preserves its direction and gives
\[
  \underline A_{s,N_s}\F(x_{N_s})
  \le
  A_{N_s}\F(x_{N_s}).
\]
Taking conditional expectation, applying
\eqref{eq:inexact-telescoped-coordinate-potential}, and using the accumulated
error bound pathwise yields
\[
\begin{aligned}
  \underline A_{s,N_s}
  \E[\F(x_{N_s})\mid z_s]
  &\le
  \E[A_{N_s}\F(x_{N_s})\mid z_s]\\
  &\le
  \frac12\norm{z_s-x^*}^2
  +\frac14\underline A_{s,N_s}\Delta_s.
\end{aligned}
\]
Factoring \(\underline A_{s,N_s}\) out of the conditional expectation is
valid because it is fixed conditionally on \(z_s\).  Dividing by
this positive quantity gives
\[
  \E[\F(x_{N_s})\mid z_s]
  \le
  \frac{\norm{z_s-x^*}^2}{2\underline A_{s,N_s}}
  +\frac{\Delta_s}{4}.
\]

The exact one-run term is proportional to \(N_s^{-2}\).  Multiplication of
the exact-segment phase length by \(\sqrt2\) divides that term by
\((\sqrt2)^2=2\), reducing it from \(\Delta_s/2\) to \(\Delta_s/4\).  Adding
the inexactness contribution \(\Delta_s/4\) gives the required phase bound
\(\Delta_s/2\).

For completeness, consider the simpler deterministic rule
\(\delta_{s,k}\le\Delta_s/(4N_s)\).  Monotonicity of the accelerated weights
gives \(A_{k+1}\le A_{N_s}\), and hence
\[
  \sum_{k=0}^{N_s-1}A_{k+1}\delta_{s,k}
  \le
  \sum_{k=0}^{N_s-1}
  A_{N_s}\frac{\Delta_s}{4N_s}
  =
  \frac14 A_{N_s}\Delta_s.
\]
After division by \(A_{N_s}\), this contributes exactly \(\Delta_s/4\).
For randomized methods the analogous division would place a random
\(A_{N_s}\) in the denominator of an expectation, which is why the
deterministic lower bound \(\underline A_{s,N_s}\) is used instead.
\end{proof}

\paragraph{Explicit lower bounds.}
We now identify admissible values of \(\underline A_{s,N}\) for the three
inner methods.  The weight estimates already proved for the exact segment
step remain valid because the inexact definition preserves
\(f(x_{k+1})\le f(y_k)\le f(x_k)\).  They give
\[
\begin{aligned}
  \underline A_{s,N}^{\rm full}
  &:=\frac{\theta N^2}{
      4\tau\cstar(H_0+H_1\Delta_s)},\\
  \underline A_{s,N}^{\rm coord}(z_s)
  &:=\frac{\theta N^2}{
      4d^2\tau\cstar(H_0+H_1\F(z_s))},\\
  \underline A_{s,N}^{\rm nonunif}(z_s)
  &:=\frac{\theta N^2}{
      4\tau\cstar
      (S_{1/2}^{(0)}+S_{1/2}^{(1)}\sqrt{\F(z_s)})^2}.
\end{aligned}
\]
For the full-gradient method,
\eqref{eq:appendix-full-weight-lower} lower bounds \(A_N\) by a squared sum of
\((H_0+H_1\F(y_k))^{-1/2}\).  Monotonicity and the certified phase condition
give \(\F(y_k)\le\Delta_s\); since \(H_1\ge0\), this implies
\[
  \frac{1}{\sqrt{H_0+H_1\F(y_k)}}
  \ge
  \frac{1}{\sqrt{H_0+H_1\Delta_s}}.
\]
Replacing every one of the \(N\) summands by this lower bound and then
squaring gives \(A_N\ge\underline A_{s,N}^{\rm full}\).  The uniform and
non-uniform coordinate bounds are exactly
\eqref{eq:appendix-coordinate-weight-lower} and
\eqref{eq:appendix-nonuniform-weight-lower}, respectively, with \(z=z_s\).
Their right-hand sides depend only on the observed phase start, so they are
fixed before the coordinate randomness of the inner run is sampled.

At iteration \(k\), a candidate segment point determines its residual and the
local model used to compute \(A_{k+1}\).  The candidate is accepted only if
these two quantities satisfy \eqref{eq:inexact-simple-schedule}; otherwise the
one-dimensional search is refined.  Thus the schedule can be checked without
knowing future accelerated weights.

\paragraph{Primitive line-search cost.}
The preceding argument counts gradient or coordinate-gradient calls exactly as
in the main theorems.  We now account separately for the primitive queries
needed to enforce the segment residual.

Let \(\nu_{s,k}(\delta)\) denote the number of primitive function or
directional-derivative queries used to produce a point satisfying
Definition~\ref{def:inexact-segment-relaxation}.
Corollary~\ref{cor:inexact-phase-schedule} requires residual
\[
  \delta_{s,k}
  \le
  \frac{\underline A_{s,N_s}\Delta_s}
       {4N_sA_{k+1}}.
\]
There is one segment search at each inner iteration.  Summing its query cost
over all iterations and phases therefore gives
\begin{equation}
\label{eq:inexact-line-cost}
  N_{\rm line}
  =
  \sum_s\sum_{k=0}^{N_s-1}
  \nu_{s,k}\!\left(
    \frac{\underline A_{s,N_s}\Delta_s}
    {4N_sA_{k+1}}
  \right)
\end{equation}
primitive line-search queries.  Without an additional regularity assumption
on the one-dimensional restrictions, this term must be reported separately.

For example, fix \(\delta>0\) and write
\(\varphi_k(\beta):=f(v_k+\beta(x_k-v_k))\).  If
\(\varphi_k'\) is \(L_{{\rm seg},k}\)-Lipschitz on \([0,1]\), a derivative
bisection first tests whether an endpoint is optimal and otherwise brackets an
interior zero \(\beta_k^*\) of \(\varphi_k'\).  If the final bracket has width
\(\rho\) and \(\beta\) lies in that bracket, Lipschitz continuity and
\(\varphi_k'(\beta_k^*)=0\) give
\[
  |\varphi_k'(\beta)|
  =
  |\varphi_k'(\beta)-\varphi_k'(\beta_k^*)|
  \le
  L_{{\rm seg},k}|\beta-\beta_k^*|
  \le
  L_{{\rm seg},k}\rho.
\]
For \(y_k=v_k+\beta(x_k-v_k)\) and
\(w=v_k+t(x_k-v_k)\), where \(t\in[0,1]\), the chain rule gives
\[
  \inner{\nabla f(y_k)}{w-y_k}
  =
  (t-\beta)\varphi_k'(\beta)
  \ge
  -|\varphi_k'(\beta)|,
\]
because \(|t-\beta|\le1\).  Hence \(\rho\le
\delta/L_{{\rm seg},k}\) is sufficient for the variational residual.  Starting
from an interval of length one, \(q\) bisections produce width \(2^{-q}\).
Thus choosing
\(q\ge\max\{0,\lceil\log_2(L_{{\rm seg},k}/\delta)\rceil\}\), together with
the endpoint tests, requires
\[
  O\!\left(
    1+\max\!\left\{0,\log\frac{L_{{\rm seg},k}}{\delta}\right\}
  \right)
\]
directional-derivative queries.  The condition \(f(y_k)\le f(x_k)\) is checked
directly before acceptance.  A function-value bracketing method may be used as
well if its final bracket can be converted to the same residual using a valid
\(L_{{\rm seg},k}\) bound.  Consequently, a logarithmic-cost one-dimensional
solver adds only logarithmic factors to \eqref{eq:inexact-line-cost}; in
coordinate applications, its function evaluations are counted separately
from partial derivatives.

\subsection{Adaptive Phase Lengths without Prior Knowledge of
\texorpdfstring{\(\widetilde R\)}{R}}
\label{app:adaptive-phase-lengths}

We next address the phase-length rules.  Their stated values use
\(\widetilde R\), although the desired phase contraction can be verified from
the observable gap because \(f^*\) is assumed known.  The wrapper below starts
from any positive trial budget, reruns every unsuccessful trial from the same
phase point, and doubles the budget until the gap is halved.  Thus
\(\widetilde R\) is absent from the algorithmic inputs and is used only to
bound how soon a trial must succeed.

\begin{algorithm}[H]
\caption{Verified Doubling for One Restart Phase}
\label{alg:verified-doubling}
\begin{algorithmic}[1]
\renewcommand{\algorithmicrequire}{\textbf{Input:}}
\renewcommand{\algorithmicensure}{\textbf{Return:}}
\REQUIRE objective \(f\), known \(f^*\), phase point \(z_s\), certified level
\(\Delta_s\), inner method \(\mathcal A\), and initial trial budget \(N_0\ge1\)
\STATE \(N\gets N_0\)
\REPEAT
  \STATE Run \(\widehat z\gets\mathcal A(f,z_s,N)\) from the same point \(z_s\)
  \IF{\(f(\widehat z)-f^*\le\Delta_s/2\)}
    \STATE \textbf{return} \(\widehat z\)
  \ENDIF
  \STATE \(N\gets2N\)
\UNTIL{the phase is accepted}
\ENSURE \(z_{s+1}=\widehat z\) with \(\F(z_{s+1})\le\Delta_s/2\)
\end{algorithmic}
\end{algorithm}

Because every trial starts from the same \(z_s\), a failed trial changes
neither the certified level \(\Delta_s\) nor the geometric quantities in the
one-run bound.  For deterministic full-gradient trials with inexact segment
relaxation, we use Corollary~\ref{cor:inexact-phase-schedule}, with
\(N_s=N\), or its simpler uniform tolerance.

For the randomized argument below, it is convenient that the entire one-run
bound, including segment inexactness, scale as \(N^{-2}\).  This is obtained by
setting the trial-level error target to
\[
  \eta_s(N)
  :=
  \frac{\Delta_s}{8}\left(\frac{N_0}{N}\right)^2
\]
and replacing \eqref{eq:inexact-simple-schedule} by
\begin{equation}
\label{eq:inexact-doubling-schedule}
  A_{k+1}\delta_{s,k}
  \le
  \frac{\underline A_{s,N}\eta_s(N)}{N},
  \qquad k=0,\ldots,N-1.
\end{equation}
Summing the \(N\) inequalities in
\eqref{eq:inexact-doubling-schedule} gives total weighted error at most
\(\underline A_{s,N}\eta_s(N)\).  The same division argument as in the proof
of Corollary~\ref{cor:inexact-phase-schedule} therefore bounds the inexactness
contribution by \(\eta_s(N)\), which is proportional to \(N^{-2}\).  Hence
Proposition~\ref{prop:randomized-doubling} below applies to exact segment
relaxation and to this tightened inexact implementation.

\begin{proposition}[Deterministic doubling overhead]
\label{prop:deterministic-doubling}
Suppose an inner method deterministically contracts phase \(s\) whenever
\(N\ge N_s^*\).  Algorithm~\ref{alg:verified-doubling}, started from
\(N_0\ge1\), accepts after fewer than
\(4\max\{N_0,N_s^*\}\) total inner iterations.  Consequently, applying it to
the full-gradient method removes \(\widetilde R\) from the inputs and preserves
the complexity of Theorem~\ref{thm:full-gradient-main} up to an absolute
constant.
\end{proposition}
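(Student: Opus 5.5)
Let me write \(N^*:=N_s^*\) and let the trial budgets be \(N_j:=2^jN_0\) for \(j=0,1,\dots\). The hypothesis says that any trial with budget \(N_j\ge N^*\) run from \(z_s\) returns \(\widehat z\) with \(\F(\widehat z)\le\Delta_s/2\). Because every trial restarts from the same \(z_s\) with the same certified level \(\Delta_s\), this condition depends only on \(N_j\), not on the history of failures. The acceptance test in Algorithm~\ref{alg:verified-doubling} is exact since \(f^*\) is known.

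\emph{Case \(N_0\ge N^*\).} The first trial is accepted, and the cost is \(N_0<4\max\{N_0,N^*\}\).

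\emph{Case \(N_0<N^*\).} Let \(J\) be the first index with \(2^JN_0\ge N^*\). Then \(J\ge1\) and \(2^{J-1}N_0<N^*\), so \(N_J<2N^*\). The algorithm stops no later than trial \(J\), and the total work is
\[
\sum_{j=0}^{J}2^jN_0=(2^{J+1}-1)N_0<2N_J<4N^*.
\]
In both cases the total is below \(4\max\{N_0,N^*\}\).

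For the full-gradient consequence, take \(N_s^*:=\mathcal N(\Delta_s)\) from Theorem~\ref{thm:full-gradient-main}. Its proof shows that the pathwise phase bound \eqref{eq:appendix-full-phase} gives contraction for every \(N\ge\mathcal N(\Delta_s)\), since that bound is decreasing in \(N\). With inexact relaxation, Corollary~\ref{cor:inexact-phase-schedule} gives the same conclusion with phase length \(N_s=N\), at the cost of a \(\sqrt2\) factor in \(N_s^*\).

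Summing over phases gives
\[
\sum_s 4\max\{N_0,N_s^*\}\le 4\sum_s N_s^*+4SN_0 .
\]
The first sum equals \(4N_{\rm tot}\) up to constants. The term \(4SN_0\) is absorbed once \(N_0\) is a fixed constant (e.g.\ \(N_0=1\)): by \eqref{eq:appendix-full-initial-curvature-radius}, \(N_s^*\ge T_0\ge\sqrt2\), so \(N_0\le N_s^*\) and \(SN_0\le\sum_sN_s^*\). Hence the complexity bound of Theorem~\ref{thm:full-gradient-main} holds up to an absolute constant, and \(\widetilde R\) enters only the analysis, not the algorithm's inputs.

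The only step that needs care, rather than routine arithmetic, is confirming that contraction is monotone in \(N\): that \emph{every} budget at least \(N_s^*\) succeeds, not just the specific value \(\mathcal N(\Delta_s)\). I would check this directly from the one-run estimate \eqref{eq:appendix-main-estimate}, whose right-hand side is nonincreasing in \(N\) because the sum in its denominator gains nonnegative terms. In the inexact case I would also check that the tolerance schedule is recomputed for each trial's \(N\).
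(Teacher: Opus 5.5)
Your proof is correct and follows the paper's argument. Both take the first doubling index $J$ with $2^JN_0\ge N_s^*$, bound the work by $(2^{J+1}-1)N_0<2^{J+1}N_0$, and split into $J=0$ versus $J\ge1$, where minimality gives $2^{J+1}N_0<4N_s^*$. Your treatment of the full-gradient consequence is more explicit than the paper's, which leaves that part implicit: the phase bound is nonincreasing in $N$, the inexact schedule is recomputed per trial, and $4SN_0$ is absorbed via $N_s^*\ge2$. That absorption gives an absolute constant only when $N_0\le2$ (e.g.\ $N_0=1$); a larger fixed $N_0$ costs a factor $O(N_0)$.
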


\begin{proof}
Let \(j\) be the smallest nonnegative integer such that
\(2^jN_0\ge N_s^*\).  By the assumed contraction guarantee, the trial with
budget \(2^jN_0\) passes the acceptance test.  The preceding trials use
budgets \(N_0,2N_0,\ldots,2^{j-1}N_0\), so the geometric-series identity gives
\[
  \sum_{i=0}^j2^iN_0
  =
  (2^{j+1}-1)N_0
  <
  2^{j+1}N_0.
\]
If \(j=0\), the total work is \(N_0\), which is smaller than
\(4\max\{N_0,N_s^*\}\).  If \(j\ge1\), minimality of \(j\) implies
\(2^{j-1}N_0<N_s^*\).  Multiplying this strict inequality by four gives
\(2^{j+1}N_0<4N_s^*\).  Combining it with the preceding display proves the
claimed bound in the remaining case as well.
\end{proof}

\begin{proposition}[Randomized coordinate variants]
\label{prop:randomized-doubling}
Suppose that, for every fixed phase start \(z_s\), the conditional one-run
estimate of a randomized coordinate inner method is bounded by
\(C_s(z_s)/N^2\).  Let
\(N_s^*(z_s)\) be large enough that the conditional upper bound satisfies
\[
  \frac{C_s(z_s)}{(N_s^*(z_s))^2}
  \le
  \frac{\Delta_s}{4}.
\]
Use independent coordinate samples in successive trials of
Algorithm~\ref{alg:verified-doubling}.  Then the algorithm terminates almost
surely, its accepted point satisfies the phase contraction for every accepted
realization, and the expected total number of inner iterations conditionally
on \(z_s\) is
\(O(\max\{N_0,N_s^*(z_s)\})\).  Hence the uniform- and non-uniform-coordinate
complexities in Theorems~\ref{thm:coordinate-main} and
\ref{thm:nonuniform-coordinate-main} are preserved in expectation, while
\(\widetilde R\) is removed from the algorithmic inputs.
\end{proposition}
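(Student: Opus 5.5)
The plan is to exploit the fact that, for a fixed phase start \(z_s\), successive trials of Algorithm~\ref{alg:verified-doubling} are conditionally independent, and each trial with budget \(N\ge N_s^*(z_s)\) succeeds with conditional probability at least \(1/2\).

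\textbf{Step 1: per-trial success probability.} For a trial with budget \(N\), the hypothesis gives
\[
  \E[\F(\widehat z)\mid z_s]\le C_s(z_s)/N^2 .
\]
When \(N\ge N_s^*(z_s)\), this is at most \(\Delta_s/4\). Since \(\F(\widehat z)\ge0\), Markov's inequality then gives
\[
  \Pr\bigl(\F(\widehat z)>\Delta_s/2\mid z_s\bigr)\le 1/2 .
\]
The acceptance test compares the observable gap with \(\Delta_s/2\), using the known \(f^*\). Hence any accepted point satisfies \(\F(z_{s+1})\le\Delta_s/2\) pathwise, for every accepted realization. The conditional bound also covers the inexact case, because the tightened schedule \eqref{eq:inexact-doubling-schedule} keeps the whole one-run bound proportional to \(N^{-2}\).

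\textbf{Step 2: almost-sure termination.} Let \(j^*\) be the first index with \(2^{j^*}N_0\ge N_s^*\). Every trial with index \(j\ge j^*\) has budget at least \(N_s^*\), so it fails with probability at most \(1/2\). The trials use independent coordinate samples and restart from the same \(z_s\), so conditionally on \(z_s\) the failure events are independent. Therefore
\[
  \Pr(\text{trials } j^*,\dots,j^*+m-1 \text{ all fail}\mid z_s)\le 2^{-m}\to0 ,
\]
and the algorithm terminates almost surely.

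\textbf{Step 3: expected cost.} This is the step I expect to be the main obstacle. Doubling budgets combined with failure probability \(1/2\) gives a borderline series, so the naive bound does not close. Trial \(j^*+m\) is executed only if all previous trials from index \(j^*\) on failed, which has probability at most \(2^{-m}\). Its cost is \(2^{j^*+m}N_0\). The expected work after the threshold is then bounded by
\[
  \sum_{m\ge0}2^{-m}\,2^{j^*+m}N_0 ,
\]
and this sum diverges.

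To fix this, I would sharpen the per-trial failure probability using the \(N^{-2}\) scaling. For \(N=2^mN\!^*\) with \(N\!^*:=2^{j^*}N_0\ge N_s^*\), Markov gives
\[
  \Pr(\text{fail})\le \frac{C_s}{N^2}\cdot\frac{2}{\Delta_s}\le \frac{1}{2}\,4^{-m}.
\]
So the probability of reaching trial \(j^*+m\) is at most \(\prod_{l<m}\tfrac12 4^{-l}\le 2^{-m}\), and in fact it decays faster, at least like \(4^{-m}\) times constants once \(m\ge1\). The expected work is then bounded by
\[
  N\!^*\Bigl(1+\sum_{m\ge1}2^m\cdot\tfrac12 4^{-(m-1)}\Bigr)=O(N\!^*).
\]
Adding the deterministic pre-threshold work \(\sum_{j<j^*}2^jN_0<N\!^*\), and using \(N\!^*\le 2\max\{N_0,N_s^*\}\) as in Proposition~\ref{prop:deterministic-doubling}, gives expected total \(O(\max\{N_0,N_s^*(z_s)\})\).

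\textbf{Step 4: instantiation.} Take \(C_s(z_s)\) from \eqref{eq:appendix-coordinate-phase} and \eqref{eq:appendix-nonuniform-phase}, with \(\norm{z_s-x^*}\le\widetilde R\). By the same computation as in the phase-length rules, \(N_s^*(z_s)\) can be chosen as \(\sqrt2\) times \(\mathcal N(\Delta_s)\) from Theorems~\ref{thm:coordinate-main} and~\ref{thm:nonuniform-coordinate-main}. Here I use \(\F(z_s)\le\Delta_s\), which now holds pathwise because of acceptance, and this is even better than the expectation invariant. Finally, take total expectations and sum over phases as in those theorems to recover the stated complexities in expectation, with \(\widetilde R\) appearing only in the bounds and not among the algorithmic inputs.
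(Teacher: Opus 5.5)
Your proof is correct and follows the paper's argument essentially step for step: the first threshold trial, Markov combined with the $N^{-2}$ scaling to get failure probability $\tfrac12 4^{-m}$, summation of the post-threshold work, the geometric pre-threshold sum with $N^*\le 2\max\{N_0,N_s^*\}$, and instantiation via the pathwise bound $\F(z_s)\le\Delta_s$. Bounding the reach probability by the last failure factor $\tfrac12 4^{-(m-1)}$ is cruder than the paper's full product $2^{-j}4^{-j(j-1)/2}$ but still suffices. One small constant fix: since you claim the inexact case is covered, the error $\eta_s(N)\le\Delta_s/8$ must be folded into $C_s(z_s)$, and then $\sqrt2\,\mathcal N(\Delta_s)$ no longer meets $C_s/N^2\le\Delta_s/4$. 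You need $\max\{N_0,2\mathcal N(\Delta_s)\}$, as in the paper's factor-$4$ thresholds, so that the exact part is at most $\Delta_s/8$.
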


\begin{proof}
Let \(\overline N_s\) be the first trial budget satisfying
\(\overline N_s\ge N_s^*(z_s)\).  If this is the initial budget, then
\(\overline N_s=N_0\).  Otherwise, minimality of the corresponding doubling
index gives \(\overline N_s<2N_s^*(z_s)\).  Hence, in both cases,
\begin{equation}
\label{eq:randomized-first-threshold-budget}
  \overline N_s
  <
  2\max\{N_0,N_s^*(z_s)\}.
\end{equation}

Index the first threshold trial by \(j=0\); its budget is
\(N_j=2^j\overline N_s\).  The assumed \(N^{-2}\) estimate and the definition
of \(N_s^*(z_s)\) imply
\begin{equation}
\label{eq:randomized-doubled-one-run}
\begin{aligned}
  \E[\F(\mathcal A(f,z_s,N_j))\mid z_s]
  &\le
  \frac{C_s(z_s)}{(2^j\overline N_s)^2}\\
  &\le
  \frac{1}{4^j}\frac{C_s(z_s)}{(N_s^*(z_s))^2}\\
  &\le
  \frac{\Delta_s}{4\cdot4^j}.
\end{aligned}
\end{equation}
The first inequality is the one-run assumption.  The second uses
\(\overline N_s\ge N_s^*(z_s)\), and the third is the defining guarantee for
\(N_s^*(z_s)\).  Because \(\F\ge0\), Markov's inequality applied to
\eqref{eq:randomized-doubled-one-run} gives
\[
\begin{aligned}
  \Pr\!\left(
    \F(\mathcal A(f,z_s,N_j))>\frac{\Delta_s}{2}
    \;\middle|\;z_s
  \right)
  &\le
  \frac{
    \E[\F(\mathcal A(f,z_s,N_j))\mid z_s]
  }{\Delta_s/2}\\
  &\le
  \frac{1}{2\cdot4^j}
  =:p_j.
\end{aligned}
\]

Successive trials use independent randomness conditionally on \(z_s\).
Therefore, reaching trial \(j\ge1\) requires the failure of trials
\(0,\ldots,j-1\), and
\begin{equation}
\label{eq:randomized-reaching-probability}
\begin{aligned}
  \Pr(\text{reach trial }j\mid z_s)
  &\le
  \prod_{r=0}^{j-1}p_r\\
  &=
  \prod_{r=0}^{j-1}\frac{1}{2\cdot4^r}\\
  &=
  2^{-j}4^{-j(j-1)/2}.
\end{aligned}
\end{equation}
The last equality uses
\(\sum_{r=0}^{j-1}r=j(j-1)/2\).  Since trial \(j\) costs
\(2^j\overline N_s\), the expected work from the first threshold trial onward
is bounded by
\begin{equation}
\label{eq:randomized-post-threshold-work}
\begin{aligned}
  \sum_{j=0}^{\infty}
  2^j\overline N_s
  \Pr(\text{reach trial }j\mid z_s)
  &\le
  \overline N_s
  \sum_{j=0}^{\infty}4^{-j(j-1)/2}\\
  &=
  O(\overline N_s).
\end{aligned}
\end{equation}
The series is finite because its terms decrease super-geometrically for
\(j\ge2\).  Moreover,
\eqref{eq:randomized-reaching-probability} tends to zero, so the probability
of never accepting is zero.  The budgets before \(\overline N_s\) form a
geometric series whose sum is smaller than \(\overline N_s\).  Combining this
fact, \eqref{eq:randomized-post-threshold-work}, and
\eqref{eq:randomized-first-threshold-budget} proves
\[
  \E[N_{s,{\rm work}}\mid z_s]
  =
  O\!\left(\max\{N_0,N_s^*(z_s)\}\right).
\]
The acceptance test is \(\F(\widehat z)\le\Delta_s/2\), so every returned
point satisfies \(\F(z_{s+1})\le\Delta_s/2\).

It remains to compare \(N_s^*(z_s)\) with the phase lengths in the main
theorems.  For the uniform-coordinate method, the one-run estimate
\eqref{eq:appendix-coordinate-phase}, the radius bound
\(\norm{z_s-x^*}\le\widetilde R\), and the tightened inexact budget above show
that the valid choice
\begin{equation}
\label{eq:randomized-uniform-threshold}
  N_{s,{\rm coord}}^*(z_s)
  :=
  \max\!\left\{
    N_0,
    \left\lceil
      4d\widetilde R\sqrt{\frac{\tau\cstar}{\theta}}
      \sqrt{\frac{H_0+H_1\F(z_s)}{\Delta_s}}
    \right\rceil
  \right\}
\end{equation}
makes the exact one-run term at most \(\Delta_s/8\).  Indeed, squaring the
quantity inside the ceiling gives the lower bound
\[
  N^2
  \ge
  \frac{16d^2\tau\cstar\widetilde R^2}{\theta\Delta_s}
  \bigl(H_0+H_1\F(z_s)\bigr).
\]
Taking reciprocals reverses this inequality because both sides are positive.
Substitution of the resulting upper bound on \(1/N^2\) into
\eqref{eq:appendix-coordinate-phase} cancels
\(d^2\tau\cstar\widetilde R^2(H_0+H_1\F(z_s))/\theta\) and leaves
\(2\Delta_s/16=\Delta_s/8\).  Since
\(\eta_s(N)\le\Delta_s/8\) for every trial budget \(N\ge N_0\), the exact and
inexact contributions sum to at most \(\Delta_s/4\), as required in the
proposition.

Applying the same calculation to
\eqref{eq:appendix-nonuniform-phase} gives the valid non-uniform threshold
\begin{equation}
\label{eq:randomized-nonuniform-threshold}
  N_{s,{\rm nonunif}}^*(z_s)
  :=
  \max\!\left\{
    N_0,
    \left\lceil
      4\widetilde R\sqrt{\frac{\tau\cstar}{\theta}}
      \frac{
        S_{1/2}^{(0)}+S_{1/2}^{(1)}\sqrt{\F(z_s)}
      }{\sqrt{\Delta_s}}
    \right\rceil
  \right\}.
\end{equation}
Squaring this threshold and substituting the reciprocal into
\eqref{eq:appendix-nonuniform-phase} again leaves the exact contribution
\(2\Delta_s/16=\Delta_s/8\); adding
\(\eta_s(N)\le\Delta_s/8\) gives the required \(\Delta_s/4\).

Finally, the verified wrapper gives
\(\F(z_s)\le\Delta_s\) for every accepted phase start.  Because
\(H_1,S_{1/2}^{(1)}\ge0\), this pathwise inequality implies
\[
  \sqrt{\frac{H_0+H_1\F(z_s)}{\Delta_s}}
  \le
  \sqrt{\frac{H_0}{\Delta_s}+H_1}
\]
and
\[
  \frac{S_{1/2}^{(0)}+S_{1/2}^{(1)}\sqrt{\F(z_s)}}
       {\sqrt{\Delta_s}}
  \le
  \frac{S_{1/2}^{(0)}}{\sqrt{\Delta_s}}+S_{1/2}^{(1)}.
\]
Thus \eqref{eq:randomized-uniform-threshold} and
\eqref{eq:randomized-nonuniform-threshold} have the same phase-wise orders as
the rules in the main theorems, differing only by absolute constants and
ceilings.  Summing the conditional expected work over phases and applying the
tower property preserves both total expected complexities.
\end{proof}

The doubling construction still relies on exact knowledge of \(f^*\) to
verify contraction, and it does not remove the need for valid smoothness
parameters.  It separates two distinct roles of \(\widetilde R\): the radius
continues to describe worst-case geometry in the bounds but need not be known
to run the methods.

\section{Comparison with General Gradient-Dependent
\texorpdfstring{\(\ell\)}{ell}-Smoothness}
\label{app:ell-smoothness-comparison}

Following \citet{tyurin2025near}, a twice differentiable function is
\(\ell\)-smooth if there is a positive, nondecreasing, locally Lipschitz
function \(\ell:[0,\infty)\to(0,\infty)\) such that
\begin{equation}
\label{eq:ell-smoothness-definition}
  \norm{\nabla^2 f(x)}
  \le
  \ell\bigl(\norm{\nabla f(x)}\bigr)
  \qquad\text{for every }x.
\end{equation}
For comparison, within the family of convex twice differentiable functions
with an attained minimum, let
\[
\begin{aligned}
  \mathcal H
  &:=\left\{f:\ \exists\ 0\le H_0,H_1<\infty,\quad
  \norm{\nabla^2 f(x)}
  \le H_0+H_1\bigl(f(x)-f^*\bigr)\ \text{for all }x\right\}, \\
  \mathcal L_{\rm gen}
  &:=\left\{f:\ \eqref{eq:ell-smoothness-definition}\ \text{holds for some
  finite-valued positive, nondecreasing, locally Lipschitz }\ell\right\}.
\end{aligned}
\]
The affine choice \(\ell(s)=L_0+L_1s\) defines a subclass
\(\mathcal L_{\rm aff}\).  As discussed in the main text and witnessed by
Examples~\ref{ex:one-dimensional-separation} and
\ref{ex:multidimensional-separation},
\(\mathcal L_{\rm aff}\subsetneq\mathcal H\).  The relation with the full
class \(\mathcal L_{\rm gen}\) is different: the two general classes are
incomparable.  We first clarify why the existing examples do not establish
this stronger statement and then prove both non-inclusions by explicit convex
examples.  For twice differentiable convex functions, the Hessian definition
of \(\mathcal H\) corresponds, up to absolute constants, to
Assumption~\ref{ass:h0h1-smoothness}; see the discussion following that
assumption.

\subsection{The existing separation examples are
\texorpdfstring{\(\ell\)}{ell}-smooth}

\begin{proposition}[Examples~\ref{ex:one-dimensional-separation} and
\ref{ex:multidimensional-separation} under \(\ell\)-smoothness]
\label{prop:existing-examples-ell-smooth}
For every \(p\in(1,2]\), both \(\phi_p\) and \(\Phi_p\) are
\(\ell_p\)-smooth with
\[
  \ell_p(s):=2+s^p.
\]
Moreover, the auxiliary function of \citet{tyurin2025near},
\(\psi_p(r):=r^2/[2\ell_p(4r)]\), is strictly increasing for \(r>0\).
Consequently, these examples satisfy the general \(\ell\)-smooth assumptions
of \citet[Theorems~3.2--3.3]{tyurin2025near}, even though they do not satisfy
any \((L_0,L_1)\)-smoothness condition with finite constants.
\end{proposition}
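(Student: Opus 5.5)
The plan is to prove the pointwise bound $\phi_p''(x)\le 2+|\phi_p'(x)|^p$ for all $x$, transfer it to $\Phi_p$, and then verify that $\psi_p$ is increasing by a one-variable computation.

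\textbf{Step 1: reduction to $q_p(x)\le 1+(1+x^2)^{p/2}$.} The proof of Example~\ref{ex:one-dimensional-separation} gives $\phi_p''=q_p\ge1$. It also gives $q_p(x)\le 1+(1+x^2)^{p/2}$, using $\exp(-u)\le1$. Since $\phi_p'(0)=0$ and $\phi_p''\ge1$, we have $|\phi_p'(x)|\ge|x|$ for every $x$. It therefore suffices to show
\[
  1+(1+x^2)^{p/2}\le 2+|x|^p .
\]

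\textbf{Step 2: the elementary inequality.} This is $(1+x^2)^{p/2}\le 1+|x|^p$ for $p\le2$. It follows from subadditivity of $t\mapsto t^{p/2}$ on $[0,\infty)$, which holds because $p/2\le1$. Hence $|\phi_p''(x)|\le \ell_p(|\phi_p'(x)|)$ with $\ell_p(s)=2+s^p$.

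This $\ell_p$ is positive and nondecreasing. It is locally Lipschitz on $[0,\infty)$ because its derivative $ps^{p-1}$ is bounded on compact sets when $p>1$.

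\textbf{Step 3: the multidimensional function.} The Hessian of $\Phi_p$ is diagonal, $\operatorname{diag}(\phi_p''(x_1),1,\ldots,1)$, so $\norm{\nabla^2\Phi_p(x)}=\phi_p''(x_1)$. By Step 2,
\[
  \phi_p''(x_1)\le 2+|\phi_p'(x_1)|^p\le 2+\norm{\nabla\Phi_p(x)}^p ,
\]
where the last inequality uses $|\phi_p'(x_1)|\le\norm{\nabla\Phi_p(x)}$ and monotonicity of $\ell_p$.

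\textbf{Step 4: monotonicity of $\psi_p$.} We have $\psi_p(r)=r^2/(2(2+4^pr^p))$. Its logarithmic derivative is
\[
  \frac{2}{r}-\frac{p\,4^pr^{p-1}}{2+4^pr^p}
  =\frac{4+(2-p)4^pr^p}{r\,(2+4^pr^p)} ,
\]
which is positive for $r>0$ since $p\le2$. So $\psi_p$ is strictly increasing.

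\textbf{Step 5: the cited theorems.} The final claim follows once I check that the hypotheses of \citet[Theorems~3.2--3.3]{tyurin2025near} are exactly: $\ell$ positive, nondecreasing, locally Lipschitz; $\psi$ increasing; convexity; and a minimizer exists. All of these hold for $\phi_p$ and $\Phi_p$. The non-$(L_0,L_1)$ part is already proved in Examples~\ref{ex:one-dimensional-separation} and~\ref{ex:multidimensional-separation}.

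The main obstacle is this last step: confirming precisely which hypotheses the cited theorems impose. In particular, if they require $\psi$ to be increasing together with some growth or unboundedness condition, I would add that $\psi_p(r)\to\infty$ when $p<2$, while for $p=2$ it tends to the finite limit $1/32$. The $p=2$ case may then need separate attention.
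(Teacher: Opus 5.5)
Your proposal is correct and follows the paper's proof essentially step for step. It uses the same bound $q_p(x)\le 1+(1+x^2)^{p/2}\le 2+|x|^p$ via subadditivity, the same lower bound $|\phi_p'(x)|\ge|x|$, the same diagonal-Hessian transfer to $\Phi_p$, and the same sign computation for $\psi_p'$; the only differences are that you use the logarithmic derivative where the paper uses the quotient rule, and you make the local Lipschitz property of $\ell_p$ explicit. For the final claim, the paper, like you, takes the cited hypotheses to be $\ell$-smoothness plus strict monotonicity of $\psi_p$ and does not treat $p=2$ separately, so your observation that $\psi_2(r)\to 1/32$ is extra caution the paper does not take, not a gap in your argument relative to it.
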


\begin{proof}
The definition of \(q_p\) and the inequality \(\exp(-u)\le1\) for \(u\ge0\)
give
\begin{equation}
\label{eq:existing-example-ell-hessian}
  \phi_p''(x)
  =q_p(x)
  \le
  1+(1+x^2)^{p/2}
  \le
  2+\lvert x\rvert^p.
\end{equation}
The last inequality follows from
\((1+x^2)^{p/2}\le1+\lvert x\rvert^p\), valid because
\(p/2\in(0,1]\).  On the other hand, \(q_p(t)\ge1\), and therefore
\begin{equation}
\label{eq:existing-example-ell-gradient}
  \lvert\phi_p'(x)\rvert
  =
  \left\lvert\int_0^x q_p(t)\,dt\right\rvert
  \ge
  \lvert x\rvert.
\end{equation}
Combining \eqref{eq:existing-example-ell-hessian} and
\eqref{eq:existing-example-ell-gradient} yields
\[
  \lvert\phi_p''(x)\rvert
  \le
  2+\lvert\phi_p'(x)\rvert^p
  =
  \ell_p\bigl(\lvert\phi_p'(x)\rvert\bigr),
\]
which proves \(\ell_p\)-smoothness of the one-dimensional example.

For the multidimensional example,
\(\nabla^2\Phi_p(x)=\operatorname{diag}(q_p(x_1),1,\ldots,1)\).  Since
\(q_p\ge1\), its spectral norm is \(q_p(x_1)\).  Also
\(\norm{\nabla\Phi_p(x)}\ge\lvert\phi_p'(x_1)\rvert\).  Hence
\[
\begin{aligned}
  \norm{\nabla^2\Phi_p(x)}
  &=q_p(x_1) \\
  &\le 2+\lvert\phi_p'(x_1)\rvert^p \\
  &\le 2+\norm{\nabla\Phi_p(x)}^p
  =\ell_p\bigl(\norm{\nabla\Phi_p(x)}\bigr).
\end{aligned}
\]
This proves \(\ell_p\)-smoothness of \(\Phi_p\).

It remains to check the auxiliary condition used by
\citet{tyurin2025near}.  Direct substitution gives
\[
  \psi_p(r)
  =
  \frac{r^2}{2\ell_p(4r)}
  =
  \frac{r^2}{4+2\cdot4^p r^p}.
\]
Differentiating this quotient gives
\[
  \psi_p'(r)
  =
  \frac{
    8r+2\cdot4^p(2-p)r^{p+1}
  }{
    (4+2\cdot4^p r^p)^2
  }
  >0,
  \qquad r>0,
\]
where positivity uses \(p\le2\).  Thus \(\psi_p\) is strictly increasing on
\((0,\infty)\), completing the proof.
\end{proof}

\subsection{Incomparability of the general classes}

The preceding proposition shows that failure of \((L_0,L_1)\)-smoothness in
Examples~\ref{ex:one-dimensional-separation} and
\ref{ex:multidimensional-separation} does not imply failure of general
\(\ell\)-smoothness.  We now give one example for each direction of the
non-inclusion.

\begin{proposition}[An \(\ell\)-smooth objective outside
\(\mathcal H\)]
\label{prop:ell-outside-h0h1}
The function \(u:\mathbb R\to\mathbb R\) defined by
\[
  u(x):=e^{x^2}-1
\]
is convex, coercive, infinitely differentiable, and \(\ell_u\)-smooth with
\(\ell_u(s):=2+\tfrac32s^2\), but \(u\notin\mathcal H\).  Hence
\(\mathcal L_{\rm gen}\nsubseteq\mathcal H\).
\end{proposition}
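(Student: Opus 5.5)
We verify the claimed properties of \(u(x)=e^{x^2}-1\) in turn: regularity and convexity first, then the \(\ell_u\)-smoothness bound, and finally the failure of every \((H_0,H_1)\)-bound, which gives \(u\notin\mathcal H\).

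\emph{Basic properties.} Smoothness is immediate, since \(u\) is a composition of \(e^t\) with a polynomial. Differentiating gives
\[
u'(x)=2xe^{x^2},\qquad u''(x)=(2+4x^2)e^{x^2}>0 .
\]
Hence \(u\) is strictly convex, with unique minimizer \(x^*=0\) and \(u^*=0\). Coercivity holds because \(u(x)\ge x^2\), using \(e^t\ge1+t\).

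\emph{\(\ell_u\)-smoothness.} Set \(s=\lvert u'(x)\rvert=2\lvert x\rvert e^{x^2}\), so that \(s^2=4x^2e^{2x^2}\). We need
\[
(2+4x^2)e^{x^2}\le 2+\tfrac32\cdot4x^2e^{2x^2}=2+6x^2e^{2x^2}.
\]
Write \(t=x^2\ge0\) and \(E=e^{t}\ge1+t\). The inequality becomes \(2E+4tE\le 2+6tE^2\), i.e.
\[
2(E-1)+4tE-6tE^2\le0 .
\]
Since \(E\ge1\), we have \(6tE^2\ge6tE=4tE+2tE\), so it suffices to show \(2(E-1)\le 2tE\), that is, \(e^t-1\le te^t\). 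This is equivalent to \(e^{-t}\ge1-t\), which is a standard inequality. The function \(\ell_u(s)=2+\tfrac32s^2\) is positive, nondecreasing and locally Lipschitz on \([0,\infty)\), so \(u\in\mathcal L_{\rm gen}\).

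\emph{\(u\notin\mathcal H\).} Suppose that \(u''(x)\le H_0+H_1u(x)\) for all \(x\) with finite \(H_0,H_1\ge0\). Then
\[
(2+4x^2)e^{x^2}\le H_0+H_1\bigl(e^{x^2}-1\bigr)\le H_0+H_1e^{x^2}.
\]
Dividing by \(e^{x^2}\) yields \(2+4x^2\le H_0e^{-x^2}+H_1\le H_0+H_1\). The left side is unbounded as \(x\to\infty\), a contradiction. Therefore \(u\notin\mathcal H\), and hence \(\mathcal L_{\rm gen}\nsubseteq\mathcal H\).

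The only step requiring care is the elementary inequality in the \(\ell\)-smoothness part. If the reduction above proved too tight, one could instead compare the two sides separately for small and large \(t\); but the argument via \(E\ge1\) and \(e^{-t}\ge1-t\) already closes it.
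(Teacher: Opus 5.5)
Your proof is correct and takes essentially the same route as the paper. Both compute $u''=(2+4x^2)e^{x^2}$, reduce the $\ell_u$-bound to $e^t-1\le te^t$ together with $te^t\le te^{2t}$ (your step $6tE^2\ge 6tE$), and obtain the contradiction for $u\notin\mathcal H$ by dividing by $e^{x^2}$; you bound the right-hand side uniformly by $H_0+H_1$ where the paper lets $t\to\infty$, and that difference is immaterial.
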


\begin{proof}
The derivatives of \(u\) are
\[
  u'(x)=2xe^{x^2},
  \qquad
  u''(x)=(2+4x^2)e^{x^2}>0.
\]
Thus \(u\) is infinitely differentiable and strictly convex.  Moreover,
\(u(0)=u'(0)=0\) and \(u(x)\to\infty\) as \(\lvert x\rvert\to\infty\), so
it is coercive and has the unique minimizer \(x^*=0\).

Set \(t=x^2\).  Since \(e^t-1\le te^t\) for \(t\ge0\), we obtain
\[
\begin{aligned}
  u''(x)
  &=2+2(e^t-1)+4te^t \\
  &\le 2+6te^t \\
  &\le 2+6te^{2t}
  =2+\frac32\lvert u'(x)\rvert^2
  =\ell_u\bigl(\lvert u'(x)\rvert\bigr).
\end{aligned}
\]
The function \(\ell_u\) is finite-valued, positive, nondecreasing, and
locally Lipschitz on \([0,\infty)\).  Therefore
\(u\in\mathcal L_{\rm gen}\).

Suppose instead that \(u\in\mathcal H\).  Then some finite
\(H_0,H_1\ge0\) would satisfy
\[
  (2+4t)e^t
  \le
  H_0+H_1(e^t-1),
  \qquad t\ge0.
\]
Dividing by \(e^t\) gives
\[
  2+4t
  \le
  H_0e^{-t}+H_1(1-e^{-t}).
\]
As \(t\to\infty\), the left-hand side diverges while the right-hand side
converges to \(H_1\), a contradiction.  Thus
\(u\notin\mathcal H\).
\end{proof}

The reverse non-inclusion requires curvature that becomes unbounded while the
gradient remains bounded.  The following construction provides it.

\begin{proposition}[An \((H_0,H_1)\)-smooth objective outside
\(\mathcal L_{\rm gen}\)]
\label{prop:h0h1-outside-ell}
There exists a convex, coercive, infinitely differentiable function
\(g:\mathbb R\to\mathbb R\) with the unique minimizer \(x^*=0\) such that
\begin{equation}
\label{eq:strict-ell-h0h1-bound}
  g''(x)
  \le
  3+g(x)-g^*,
  \qquad x\in\mathbb R,
\end{equation}
but \(g\) is not \(\ell\)-smooth for any finite-valued nondecreasing function
\(\ell:[0,\infty)\to(0,\infty)\).  Consequently,
\(g\in\mathcal H\setminus\mathcal L_{\rm gen}\), and hence
\(\mathcal H\nsubseteq\mathcal L_{\rm gen}\).
\end{proposition}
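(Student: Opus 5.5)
The plan is to modify the construction of Example~\ref{ex:one-dimensional-separation} so that the gradient stays \emph{globally bounded} while the curvature still has unbounded spikes. The obstruction is then immediate. If \(g\) were \(\ell\)-smooth for a finite nondecreasing \(\ell\), then \(\sup_x g''(x)\le\ell(\sup_x|g'(x)|)<\infty\), which the spikes contradict.

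\textbf{Construction.} Specify the second derivative \(g''=q\) and set \(g(x):=\int_0^x\int_0^s q(t)\,dt\,ds\), so that \(g(0)=g'(0)=0\). Take \(q:=b+\sum_{n\ge 3}\sigma_n\), with the following ingredients.
\begin{itemize}
\item The baseline is \(b(t):=(1+t^2)^{-1}\). It is smooth, positive, integrable with \(\int_{\mathbb R}b=\pi\), and satisfies \(b\le1\).
\item Each spike is \(\sigma_n(t):=h_n\,\kappa\bigl((t-n)/w_n\bigr)+h_n\,\kappa\bigl((t+n)/w_n\bigr)\), where \(\kappa\) is a fixed \(C^\infty\) bump with support in \([-1,1]\), \(0\le\kappa\le1\), and \(\kappa(0)=1\).
\item The heights are \(h_n:=\tfrac{\pi}{8}(n-2)\), which tend to infinity.
\item The widths satisfy \(w_n\le\min\{1/4,\,2^{-n}/h_n\}\), so that \(\int\sigma_n\le 4\cdot 2^{-n}\).
\end{itemize}
The supports are disjoint and locally finite, so \(q\) is \(C^\infty\), \(q>0\), and \(q\) is even. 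Hence \(g\) is \(C^\infty\) and strictly convex with unique minimizer \(x^*=0\) and \(g^*=0\). Moreover
\[
|g'(x)|\le\int_{\mathbb R}q\le\pi+4\sum_{n\ge3}2^{-n}=:G<\infty,
\]
so the gradient is bounded on all of \(\mathbb R\).

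\textbf{Coercivity and a linear lower bound on \(g\).} For \(x\ge1\), \(g'(x)\ge\int_0^x b=\arctan x\ge\pi/4\). Integrating from \(1\) gives \(g(x)\ge\tfrac{\pi}{4}(x-1)\), and by symmetry \(g(x)\ge\tfrac{\pi}{4}(|x|-1)\) for all \(x\). This gives coercivity.

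\textbf{Verifying~\eqref{eq:strict-ell-h0h1-bound}.} Away from the spike supports, \(g''=b\le1\le3+g\). On the support of the spike centred at \(\pm n\) we have \(|x|\ge n-\tfrac14\). There, using \(g\ge\tfrac{\pi}{4}(|x|-1)\),
\[
g''(x)\le 1+h_n=1+\tfrac{\pi}{8}(n-2)\le 3+\tfrac{\pi}{4}\bigl(n-\tfrac54\bigr)\le 3+g(x).
\]
The middle inequality holds because \(\tfrac{\pi}{4}(n-\tfrac54)-\tfrac{\pi}{8}(n-2)=\tfrac{\pi}{8}\bigl(n-\tfrac12\bigr)\ge0\), even without using the extra \(2\) on the right. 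The bound therefore holds for all \(x\), and in particular \(g\in\mathcal H\) with \(H_0=3\) and \(H_1=1\).

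\textbf{Failure of \(\ell\)-smoothness.} At \(x=n\) we have \(g''(n)\ge h_n\to\infty\), while \(|g'(n)|\le G\). For any finite nondecreasing \(\ell\), \(\ell\)-smoothness would force \(h_n\le\ell(|g'(n)|)\le\ell(G)\) for every \(n\), which is impossible. Hence \(g\notin\mathcal L_{\rm gen}\).

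The only delicate step is the bookkeeping in the verification of~\eqref{eq:strict-ell-h0h1-bound}. The heights must grow, yet stay below the linear lower bound on \(g\), and the widths must shrink fast enough that the gradient remains bounded. This is exactly why the heights are taken linear in \(n\) with a small slope and the widths are of order \(2^{-n}/h_n\). Everything else is routine smoothness and symmetry.
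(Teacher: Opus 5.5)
Your proof is correct. It follows the same overall strategy as the paper: make $g''$ integrable so that $g'$ stays bounded, place spikes of linearly growing height at the integers, and absorb them using the at-least-linear growth of $g$. The construction and the verification, however, are genuinely different.

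\textbf{The paper's route.} It uses a single closed-form, real-analytic second derivative $q(t)=\operatorname{sech}^2(t)+a(t)\exp(-a(t)^6\sin^2(\pi t))$ with $a(t)=\sqrt{1+t^2}$. Its spikes are not compactly supported, and their height ($\approx a$) and width ($\approx a^{-3}$) are tied together by one formula. Boundedness of $g'$ therefore needs a Gaussian estimate on each unit interval, giving mass at most $6\sqrt\pi/A_m^2$; this uses $|\sin\pi u|\ge2|u|$ and two-sided bounds on $a$. The $(H_0,H_1)$ bound, on the other hand, needs no case analysis: $q(x)\le2+|x|\le2+\log2+\log\cosh x\le3+g(x)$, using $g\ge\log\cosh$.

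\textbf{Your route.} You decouple heights and widths by using compactly supported $C^\infty$ bumps. This makes $\int_{\mathbb R}q\le\pi+1$ immediate. The Hessian bound then reduces to a two-case check: off the supports, and on the support of a single spike, where $|x|\ge n-\tfrac14$. Both cases are compared against the linear lower bound $g(x)\ge\tfrac\pi4(|x|-1)$ obtained from $\arctan$.

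\textbf{Comparison.} Your argument is more elementary and more flexible: any heights $h_n\le2+\tfrac\pi4(n-\tfrac54)$ would work, with widths shrunk as much as needed. The paper's construction gives an explicit formula that parallels Example~\ref{ex:one-dimensional-separation}, and it yields a real-analytic $g$, which no bump-function construction can. Analyticity is not required by the statement, so this is a matter of taste.

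\textbf{One small detail.} To claim $q$ is even, you should take $\kappa$ even, for example the standard bump $e^{-1/(1-s^2)}$. Alternatively, you can drop the symmetry step: for $x\le-1$ one gets $g'(x)\le-\arctan|x|\le-\pi/4$ directly, which gives the same linear lower bound.
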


\begin{proof}
Define
\begin{equation}
\label{eq:strict-ell-construction}
  a(t):=\sqrt{1+t^2},
  \qquad
  q(t):=\operatorname{sech}^2(t)
  +a(t)\exp\bigl(-a(t)^6\sin^2(\pi t)\bigr),
\end{equation}
and let
\begin{equation}
\label{eq:strict-ell-function}
  g(x)
  :=
  \int_0^x\int_0^s q(t)\,dt\,ds.
\end{equation}
The function \(q\) is infinitely differentiable, even, and strictly positive.
The fundamental theorem of calculus therefore gives
\begin{equation}
\label{eq:strict-ell-derivatives}
  g'(x)=\int_0^x q(t)\,dt,
  \qquad
  g''(x)=q(x)>0.
\end{equation}
Thus \(g\) is infinitely differentiable and strictly convex.  Moreover,
\(g(0)=g'(0)=0\), so its unique minimizer is \(x^*=0\) and \(g^*=0\).

We next verify coercivity and the \((H_0,H_1)\) bound.  Since
\(q(t)\ge\operatorname{sech}^2(t)\), while
\((\log\cosh t)''=\operatorname{sech}^2(t)\), and both functions and their
first derivatives vanish at zero, convexity of their difference gives
\begin{equation}
\label{eq:strict-ell-logcosh-lower}
  g(x)
  \ge
  \log\cosh x.
\end{equation}
Because \(\log\cosh x\to\infty\) as \(\lvert x\rvert\to\infty\), the
function \(g\) is coercive.

The inequalities \(\exp(-u)\le1\), \(\operatorname{sech}^2x\le1\), and
\(\sqrt{1+x^2}\le1+\lvert x\rvert\) imply
\begin{equation}
\label{eq:strict-ell-hessian-linear-x}
  q(x)
  \le
  1+\sqrt{1+x^2}
  \le
  2+\lvert x\rvert.
\end{equation}
Also \(\cosh x\ge e^{\lvert x\rvert}/2\), and hence
\begin{equation}
\label{eq:strict-ell-x-logcosh}
  \lvert x\rvert
  \le
  \log\cosh x+\log 2.
\end{equation}
Combining \eqref{eq:strict-ell-logcosh-lower}--
\eqref{eq:strict-ell-x-logcosh}, and using \(\log 2<1\), gives
\[
  g''(x)
  =q(x)
  \le
  2+\log 2+\log\cosh x
  \le
  3+g(x).
\]
This proves \eqref{eq:strict-ell-h0h1-bound}.  Applying the same
Hessian-to-local-model implication used in the proofs of
Examples~\ref{ex:one-dimensional-separation} and
\ref{ex:multidimensional-separation}, namely
\citet[Lemma~2]{liu2025warmup}, shows that \(g\) satisfies
Assumption~\ref{ass:h0h1-smoothness} with \(H_0=3\) and \(H_1=1\).

It remains to show that the gradient is bounded although the Hessian is not.
For \(m\ge1\), set
\[
  I_m:=\left[m-\frac12,m+\frac12\right],
  \qquad
  A_m:=\sqrt{1+m^2}.
\]
If \(t\in I_m\), then elementary comparison of \(1+t^2\) and \(1+m^2\)
gives
\begin{equation}
\label{eq:strict-ell-am-bounds}
  \frac{A_m}{2}
  \le
  a(t)
  \le
  \frac{3A_m}{2}.
\end{equation}
Writing \(t=m+u\), where \(\lvert u\rvert\le1/2\), concavity of sine on
\([0,1/2]\) and symmetry give
\begin{equation}
\label{eq:strict-ell-sine-bound}
  \lvert\sin(\pi t)\rvert
  =
  \lvert\sin(\pi u)\rvert
  \ge
  2\lvert u\rvert.
\end{equation}
Equations \eqref{eq:strict-ell-am-bounds} and
\eqref{eq:strict-ell-sine-bound} imply
\(a(t)^6\sin^2(\pi t)\ge A_m^6u^2/16\).  Therefore
\begin{equation}
\label{eq:strict-ell-spike-integral}
\begin{aligned}
  &\int_{I_m}
  a(t)\exp\bigl(-a(t)^6\sin^2(\pi t)\bigr)\,dt \\
  &\quad\le
  \frac{3A_m}{2}
  \int_{-\infty}^{\infty}
  \exp\left(-\frac{A_m^6u^2}{16}\right)du
  =
  \frac{6\sqrt\pi}{A_m^2}.
\end{aligned}
\end{equation}
The inequality uses the upper bound on \(a(t)\), the lower bound on the
exponent, and an enlargement of the integration interval; the equality is the
Gaussian integral.  The spike term is continuous and hence integrable on
\([0,1/2]\).  Since
\(\sum_{m=1}^{\infty}A_m^{-2}=\sum_{m=1}^{\infty}(1+m^2)^{-1}<\infty\),
equation \eqref{eq:strict-ell-spike-integral} yields
\[
  \int_0^\infty
  a(t)\exp\bigl(-a(t)^6\sin^2(\pi t)\bigr)\,dt
  <\infty.
\]
Together with \(\int_0^\infty\operatorname{sech}^2(t)\,dt=1\), this proves
that there is a finite constant \(G\) such that
\begin{equation}
\label{eq:strict-ell-gradient-bound}
  \lvert g'(x)\rvert
  \le
  G,
  \qquad x\in\mathbb R.
\end{equation}
Here evenness of \(q\) extends the estimate from \(x\ge0\) to all real \(x\).

At every integer \(n\ge1\), \(\sin(\pi n)=0\).  Hence
\begin{equation}
\label{eq:strict-ell-hessian-integers}
  g''(n)
  =q(n)
  =\operatorname{sech}^2(n)+\sqrt{1+n^2}
  \ge
  n.
\end{equation}
Thus \(g''(n)\to\infty\), whereas \(\lvert g'(n)\rvert\le G\).  If \(g\)
were \(\ell\)-smooth for a finite-valued nondecreasing \(\ell\), then
\[
  g''(n)
  \le
  \ell\bigl(\lvert g'(n)\rvert\bigr)
  \le
  \ell(G)
  <\infty
\]
for every \(n\), contradicting \eqref{eq:strict-ell-hessian-integers}.
Therefore no such \(\ell\) exists.
\end{proof}

Propositions~\ref{prop:ell-outside-h0h1} and
\ref{prop:h0h1-outside-ell} establish the precise relationship
\[
  \mathcal L_{\rm aff}
  \subsetneq
  \mathcal H,
  \qquad
  \mathcal H\setminus\mathcal L_{\rm gen}\ne\varnothing,
  \qquad
  \mathcal L_{\rm gen}\setminus\mathcal H\ne\varnothing.
\]
Thus \((H_0,H_1)\)-smoothness is strictly weaker than affine
\((L_0,L_1)\)-smoothness, whereas the full \((H_0,H_1)\)-smooth and general
\(\ell\)-smooth classes are incomparable.  General \(\ell\)-smooth
acceleration and our native
\((H_0,H_1)\) guarantees are therefore complementary rather than mutually
subsuming.  Our results cover the full gap-dependent class, including
objectives that cannot be controlled by any finite-valued function of the
gradient norm alone, and provide uniform and non-uniform coordinate
extensions in the native \((H_0,H_1)\) parameters.

\end{document}